\newtheorem{theorem}{Theorem}[section]
\newtheorem{lemma}[theorem]{Lemma}
\newtheorem{prop}[theorem]{Proposition}
\newtheorem{re}[theorem]{Remark}
\newtheorem{no}[theorem]{Notation}
\newtheorem{definition}[theorem]{Definition}
\theoremstyle{definition}
\definecolor{wco}{rgb}{0.5,0.2,0.3}
\numberwithin{equation}{section}
\begin{document}
\title{Widths of embeddings
in weighted function spaces
%\thanks{The authors are partly supported by NNSF... of China.
% The second author is partly supported by ..}
}
\author{Shun Zhang $^{a,\, b}$\ \ and\ \ Gensun Fang
$^{a,\,}$\footnote{Corresponding author.
\newline\indent\ \, E-mail addresses: fanggs@bnu.edu.cn (G. Fang), shzhang27@163.com (S. Zhang).}
\\ {\small $^{a}$ School of Mathematical Sciences, Beijing Normal University,
Beijing 100875, China}
\\ {\small $^{b}$ School of Computer Science and Technology, Anhui
University,
 Hefei 230039, Anhui, China}}
 \maketitle
\begin{abstract}
 We study the asymptotic behaviour of the
approximation, Gelfand and Kolmogorov numbers of the compact
embeddings of weighted function spaces of Besov and Triebel-Lizorkin
type in the case where the weights belong to a large class. We
obtain the exact estimates in almost all nonlimiting situations
where the quasi-Banach setting is included. At the end we present
complete results on related widths for polynomial weights with small
perturbations, in particular the sharp estimates in the case $\alpha
= d (\frac 1{p_2}-\frac 1{p_1})>0$ therein.
\end{abstract}
{\bf Keywords:}\, Approximation numbers;
 Gelfand numbers; Kolmogorov numbers;
  Compact embeddings; Weighted Besov spaces; Smooth weights.\\
{\bf Mathematics Subject Classification (2010):}\,
41A46,~\,46E35,~\,47B06.

\section{Introduction}
This is a direct continuation of \cite{ZF10, ZF11} on n-widths of
compact embeddings $B_{p_1,q_1}^{s_1}(\mathbb{R}^d,
w_1)\hookrightarrow B_{p_2,q_2}^{s_2}(\mathbb{R}^d, w_2)$ of
weighted Besov spaces. In these articles we considered the case
where the ratio of the weights $w(x):= w_1(x)/w_2(x)$ is of
polynomial type, and determined the asymptotic degree of the Gelfand
and Kolmogorov numbers of the corresponding embeddings. Also,
Skrzypczak \cite{Sk05} investigated the approximation numbers of the
embeddings.

In the present paper we turn our attention to more general weight
cases, and investigate the asymptotic behaviour of the
approximation, Gelfand and Kolmogorov numbers of the corresponding
embeddings. This problem has been suggested recently by K$\ddot{\rm
u}$hn et al. \cite{KLSS06} for compactness and asymptotic estimates
of the entropy numbers.

Let us make an agreement throughout this paper,
\begin{equation}\label{emB_con} -\infty<s_2<s_1<\infty,\
0< p_1, p_2, q_1, q_2\leq\infty\ \ {\rm and}\ \
\delta=s_1-s_2-d(\frac 1{p_1}-\frac 1{p_2})>0
\end{equation}
if no further restrictions are stated.

Our main results are Theorems \ref{an}, \ref{kn} and \ref{gn}. And
our main tools will be the use of operator ideals (see \cite{Pie78,
Pie87}), the basic estimates of related widths (see Gluskin
\cite{Gl83}, Skrzypczak and Vyb\'iral \cite{Sk05,SV09,Vy08}) and
their relations to entropy numbers due to Carl \cite{Car81} and
Cobos and K$\ddot{\rm u}$hn \cite{CK09,Ku08}.

\begin{no}
By the symbol ` $\hookrightarrow$'  we denote continuous embeddings.

By $\mathbb{N}$ we denote the set of natural numbers, by\
$\mathbb{N}_0$\ the set\, $\mathbb{N}\cup\{0\}$.

Identity operators will always be denoted by {\rm id}. Sometimes we
do not indicate the spaces where {\rm id} is considered, and
likewise for other operators.

Let $X$ and $Y$ be complex quasi-Banach spaces and denote by
$\mathcal {L}(X, Y)$ the class of all linear continuous operators
$T:\,X \rightarrow\, Y.$ If no ambiguity arises, we write $\|T\|$
instead of the more exact versions $\|T ~|~ \mathcal {L}(X, Y)\|$ or
$\|T:X\rightarrow Y\|$.

The symbol $x_k \preceq y_k$ means that there exists a constant $c
> 0$\ such
that\ $x_k\le c\,y_k$\ for all\ $k\in\mathbb{N}.$\ And $x_k \succeq
y_k$ stands for $y_k \preceq x_k,$\ while $x_k\sim y_k$ denotes\
$x_k\preceq y_k \preceq x_k.$

All unimportant constants will be denoted by $c$ or $C$, sometimes
with additional indices.
\end{no}

Now we recall the definitions of the approximation, Gelfand and
Kolmogorov numbers (see \cite{Pie78, Pin85}).  We use the symbol
$A\subset\subset B$ if $A$ is a closed subspace of a topological
vector space $B$.

\begin{definition}
Let $T \in\mathcal {L}(X, Y)$\ and\ $k\in \mathbb{N}$.
\begin{enumerate}
\item[{\rm (i)}]\
The {\rm $k$th approximation number}\, of~ $T$ is defined by
\begin{equation*}
a_k(T, \,X, \,Y)=\inf\{\|T - A\|:~ A\in \mathcal{L}(X,Y) ~\,{\rm
with~ rank} (A) < k\},
\end{equation*}
also written by $a_k(T)$ if no confusion is possible. Here ${\rm
rank} (A)$ is the dimension of the range of the operator $A$.
\item[{\rm (ii)}]\
The {\rm $k$th Kolmogorov number}\, of~ $T$ is defined by
\begin{equation*}
d_k(T, X, Y)=\inf\{\|Q_N^YT\|:\,N\subset\subset Y,\,{\rm dim}
(N)<k\},
\end{equation*}
also written by $d_k(T)$ if no confusion is possible. Here, $Q_N^Y$
stands for the natural surjection of\,\,\,$Y$ onto the quotient
space $Y/N$.
\item[{\rm (iii)}]\
The {\rm $k$th Gelfand number}\, of~ $T$ is  defined by
\begin{equation*}
c_k(T, X, Y)=\inf\{\|TJ_M^X\|:\,M\subset\subset X,\,{\rm codim}
(N)<k\},
\end{equation*}
also written by $c_k(T)$ if no confusion is possible. Here, $J_M^X$
stands for the natural injection of\,\,\,$M$ into $X$.
\end{enumerate}
\end{definition}

Note that the $k$-th approximation, Kolmogorov and Gelfand number
are identical to the $(k-1)$-th linear, Kolmogorov and Gelfand width
of $T$, respectively, see Pinkus \cite{Pin85}.

It is well-known that the operator $T$ is compact if and only if
$\lim_k d_k(T)=0$ or equivalently $\lim_k c_k(T)=0$, but if $\lim_k
a_k(T)=0$, see \cite{Pin85}. The opposite implication for $a_k(T)$
is not true in general.

Both concepts, Kolmogorov and Gelfand numbers, are related to each
other. Namely they are dual to each other in the following sense,
cf. \cite{Pie78, Pin85}: If $X$ and $Y$ are Banach spaces, then
\begin{equation}\label{dualc*d}
c_k(T^\ast)=d_k(T)
\end{equation}
for all compact operators $T\in\mathcal{L}(X, Y)$ and
\begin{equation}\label{duald*c}
d_k(T^\ast)=c_k(T)
\end{equation}
for all $T\in\mathcal{L}(X, Y).$

Approximation, Gelfand and Kolmogorov numbers are subadditive and
multiplicative s-numbers. One may consult Pietsch
\cite{Pie87}(Sections 2.4, 2.5), for the proof in the Banach space
case. Further, the generalization to $p$-Banach spaces follows
obviously. Let $Y$ be a $p$-Banach space,\ $0<p\le 1$. And let $s_k$
denote any of the three quantities\ $a_k,\, d_k$ or $c_k$. More
precisely, we collect several common properties of them as follows,
\vspace{-0.2cm}
\begin{enumerate}
\item[]{\rm\bf(PS1)}\ (monotonicity)\,
$\|T\|=s_1(T)\ge s_2(T)\ge\cdots\ge 0$ for all $T\in\mathcal{L}(X,
Y)$,\vspace{-0.2cm}

\item[]{\rm\bf(PS2)}\ (subadditivity)\, $s_{m+k-1}^p(S+T)\leq
s_m^p(S)+s_k^p(T)$\, for all $m, k\in\mathbb{N},\,\,S,
T\in\mathcal{L}(X, Y)$,\vspace{-0.2cm}

\item[]{\rm\bf(PS3)}\ (multiplicativity)\, $s_{m+k-1}(ST)\leq
s_m(S)s_k(T)$\, for all $T\in\mathcal{L}(X, Y)$, $S\in\mathcal{L}(Y,
Z)$

\quad\quad and $m, k\in\mathbb{N},$\, cf. \cite{Pie78}(p. 155),
where $Z$ denotes a quasi-Banach space,\vspace{-0.2cm}

\item[]{\rm\bf(PS4)}\ (rank property)\, ${\rm rank}(T)<k$ if and only if
$s_k(T)=0$, where $T\in\mathcal{L}(X, Y)$.
\end{enumerate}

Moreover, there exist the following relationships:
\begin{equation}\label{acd}
 a_k(T)\ge \max( c_k(T), d_k(T)),\ \ \ k \in \mathbb{N}.
\end{equation}

We organize this paper as follows. In Section 2, we introduce
weighted function spaces of B-type and F-type, and present our main
results. In Section 3, we study the approximation numbers of
embeddings of related sequence spaces and use these results to
derive the desired estimates for the function space embeddings under
consideration. Similar results on the Gelfand and Kolmogorov numbers
of such embeddings are also established. Finally, Section 4 is
devoted to the investigation of widths for two typical classes of
weights and complementing a related result on a special case,
$\alpha = d / {p^*}>0$, which appeared in \cite{Ku08, KLSS06}.

\section{Widths of weighted function spaces}
\subsection{Weighted function spaces}
Throughout this paper we are interested in the function spaces with
a general class of weights, which satisfy two different types of
conditions. In order to investigate the behavior of the weight near
infinity, we may ignore local singularities of the weight and
concentrate on smooth weights. And it will be convenient for us to
define the following class $\mathcal{W}_1$ of weights, see
\cite{ET96}.
\begin{definition}
We say that a function $w ~ : \mathbb{R}^d\rightarrow (0,\infty)$
belongs to $\mathcal{W}_1$ if it satisfies the following conditions.
\\{\rm (i)}~ The function $w$ is infinitely differentiable. \\
{\rm (ii)}~ There exist a constant $c > 0$ and a number $\alpha\ge
0$ such that
\begin{equation}\label{w1}
0<w(x)\leq cw(y)(1+|x-y|)^\alpha
\end{equation}
holds for all $x, y\in\mathbb{R}^d$.
\\{\rm (iii)}~ For all multi-indices $\alpha \in \mathbb{N}_0^d$ the quantities
 $$c_{w,\alpha} := \sup\limits_{x\in\mathbb{R}^d}
 \frac{ |D^\alpha w(x)|}{w(x)}$$
are finite.
\end{definition}

We suppose that the reader is familiar with the (unweighted) Besov
and Triebel-Lizorkin spaces on $\mathbb{R}^d$. One can consult
\cite{ET96,Tr83,Tr06} and many other literatures for the definitions
and basic properties. As usual, $\mathcal{S}^\prime(\mathbb{R}^d)$
denotes the set of all tempered distributions on the Euclidean
d-space $\mathbb{R}^d$. For us it will be convenient to introduce
weighted function spaces to be studied here.

\begin{definition}\label{BF}
Let $0< p, q\leq \infty,\,$ and $s\in\mathbb{R}$, and let $w\in
\mathcal{W}_1.$ Then we put
\begin{equation*}
B_{p,q}^{s}(\mathbb{R}^d, w)=\left\{f\in \mathcal{S}^\prime
(\mathbb{R}^d)\,:\, \|f ~|~ B_{p,q}^{s}(\mathbb{R}^d, w)\|=\|fw ~|~
B_{p,q}^{s}(\mathbb{R}^d)\|<\infty\right\},
\end{equation*}
\begin{equation*}
F_{p,q}^{s}(\mathbb{R}^d, w)=\left\{f\in \mathcal{S}^\prime
(\mathbb{R}^d)\,:\, \|f ~|~ F_{p,q}^{s}(\mathbb{R}^d, w)\|=\|fw ~|~
F_{p,q}^{s}(\mathbb{R}^d)\|<\infty\right\},
\end{equation*}
with $p<\infty$ for the $F$-spaces.
\end{definition}
\begin{re}
If no ambiguity arises, then we can write $B_{p,q}^{s}(w)$ and
$F_{p,q}^{s}(w)$ for brevity.
\end{re}
\begin{re}
There are different ways to introduce weighted function spaces; cf.,
eg., Edmunds and Triebel \cite{ET96}, or Schmeisser and Triebel
\cite{ST87}. One can also consult \cite{ KLSS05, Tr06} for related
remarks.
\end{re}

\subsection{Besov spaces and sequence spaces}
~~~~There are various ways to transform our problem in function
spaces to the simpler context of sequence spaces. Here we are going
to use the wavelet representations of Besov spaces as an essential
tool. We quote the wavelet characterization of weighted Besov spaces
proved in \cite{HT05} where quasi-Banach case is included, cf. also
\cite{KLSS05,KLSS06}.

\begin{prop}\label{Besov_des}
Let $s\in \mathbb{R}$ and $0< p,q\leq\infty.$ Assume $$r>\max(s,
\frac{2d}p+\frac d2-s).$$ Then for every weight $w_\alpha$ there
exists an orthonormal basis of compactly supported wavelet functions
$\{\varphi_{j,k}\}_{j,k}\cup\{\psi_{i,j,k}\}_{i,j,k},\
j\in\mathbb{N}_0,\ k\in \mathbb{Z}^d$ and $i=1, \ldots, 2^d-1$, such
that a distribution $f\in \mathcal {S}^\prime(\mathbb{R}^d)$ belongs
to $B_{p,q}^{s}(w_\alpha)$ if and only if
\begin{equation}\label{Besov_ell}
\begin{split}
\|f|B_{p,q}^{s}(w_\alpha)\|^\clubsuit =&\Big(
\sum\limits_{k\in\mathbb{Z}^d}|\langle f,\varphi_{0,k}\rangle
w_\alpha(k)|^p \Big)^{1/p}
\\
&+\sum\limits_{i=1}^{2^d-1}\Big\{ \sum\limits_{j=0}^\infty
2^{j\big(s+d(\frac 12-\frac 1p)\big)q}
\Big(\sum\limits_{k\in\mathbb{Z}^d}|\langle f,\psi_{i,j,k}\rangle
 w_\alpha(2^{-j}k)|^p\Big)^{q/p}\Big\}^{1/q}<\infty.
\end{split}
\end{equation}
Furthermore, $\|f|B_{p,q}^{s}(w_\alpha)\|^\clubsuit$ may be used as
an equivalent quasi-norm in $B_{p,q}^{s}(w_\alpha)$.
\end{prop}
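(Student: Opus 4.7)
The plan is to bootstrap from the classical wavelet characterization of unweighted Besov spaces $B^s_{p,q}(\mathbb{R}^d)$ and transfer it to the weighted setting using the smoothness and polynomial-growth properties of $w_\alpha \in \mathcal{W}_1$. The condition $r > \max(s,\, 2d/p + d/2 - s)$ is the standard requirement on the Daubechies wavelets $\{\varphi_{0,k}\}\cup\{\psi_{i,j,k}\}$: the first bound ensures that the wavelets themselves lie in $B^s_{p,q}$, while the second controls the vanishing moments, so that the usual sequence-norm equivalence
\begin{equation*}
\|g \,|\, B^s_{p,q}(\mathbb{R}^d)\| \sim \Big(\sum_k |\langle g,\varphi_{0,k}\rangle|^p\Big)^{1/p}
+ \sum_i\Big\{\sum_j 2^{jq(s+d(\tfrac12-\tfrac1p))}\Big(\sum_k|\langle g,\psi_{i,j,k}\rangle|^p\Big)^{q/p}\Big\}^{1/q}
\end{equation*}
is available in the full quasi-Banach range. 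I would take this as the starting point.

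By Definition \ref{BF}, $\|f \,|\, B^s_{p,q}(w_\alpha)\| = \|fw_\alpha \,|\, B^s_{p,q}(\mathbb{R}^d)\|$, so applying the unweighted characterization to $g = fw_\alpha$ reduces the proposition to comparing $\langle fw_\alpha, \psi_{i,j,k}\rangle$ with $w_\alpha(2^{-j}k)\langle f, \psi_{i,j,k}\rangle$ (and analogously for the coarse-scale coefficients $\varphi_{0,k}$). Provided this comparison holds up to an almost-diagonal remainder that is uniformly bounded in the sequence space $b^s_{p,q}$, the equivalence \eqref{Besov_ell} drops out.

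The heart of the argument---and the step I expect to be the main obstacle---is controlling
\begin{equation*}
E_{i,j,k} \;:=\; \bigl\langle (w_\alpha - w_\alpha(2^{-j}k))\,f,\; \psi_{i,j,k}\bigr\rangle
\end{equation*}
uniformly in $(i,j,k)$. Since $\psi_{i,j,k}$ is supported in a ball of radius $\lesssim 2^{-j}$ about $2^{-j}k$, I would Taylor-expand $w_\alpha$ at $2^{-j}k$ to some order $M$ larger than $r$. Conditions (ii) and (iii) of $\mathcal{W}_1$ bound $|D^\gamma w_\alpha(x)| \le c_{w,\gamma}\,w_\alpha(x) \le c'\,w_\alpha(2^{-j}k)$ uniformly on that support. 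The low-order Taylor terms are annihilated by the vanishing moments of $\psi$ (made available by our choice of $r$), while the integral remainder contributes an extra factor $2^{-jM}$; summation over $j$ converges whenever $M$ exceeds $s + d(\tfrac12-\tfrac1p)$, which is exactly what the hypothesis on $r$ arranges. Carrying this through produces a diagonal main term $w_\alpha(2^{-j}k)\langle f,\psi_{i,j,k}\rangle$ plus an error bounded in $b^s_{p,q}$ by an absolute constant times the same weighted sequence norm.

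Packaging the two directions gives $\|fw_\alpha \,|\, B^s_{p,q}\| \sim \|f\,|\,B^s_{p,q}(w_\alpha)\|^{\clubsuit}$. The reverse inequality is handled symmetrically, noting that $1/w_\alpha$ again belongs to $\mathcal{W}_1$ (both (ii) and (iii) are invariant under $w\mapsto 1/w$ with suitably adjusted constants), so the same Taylor-remainder bookkeeping applies with the roles of $f$ and $fw_\alpha$ swapped. This yields the claimed wavelet characterization; the proof follows the line of Haroske--Triebel \cite{HT05}, with the technical core being precisely the moment-cancellation estimate for $E_{i,j,k}$ sketched above.
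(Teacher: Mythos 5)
The paper does not actually prove this proposition: it is quoted verbatim from Haroske--Triebel \cite{HT05}, and the authors only point to that reference. So there is no internal argument to compare yours with, and I can only assess your sketch on its own terms. Your overall architecture --- pass to $g=fw_\alpha$ via Definition \ref{BF}, invoke the unweighted wavelet characterization, and then trade $\langle fw_\alpha,\psi_{i,j,k}\rangle$ for $w_\alpha(2^{-j}k)\langle f,\psi_{i,j,k}\rangle$ using properties (ii) and (iii) of $\mathcal{W}_1$ --- is indeed the strategy of the cited source, and your remark that $1/w_\alpha$ again belongs to $\mathcal{W}_1$ (so the two directions are symmetric) is correct.

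There is, however, a genuine gap at the technical core. You claim that after Taylor-expanding $w_\alpha$ at $2^{-j}k$, ``the low-order Taylor terms are annihilated by the vanishing moments of $\psi$.'' They are not: the term of multi-order $\gamma$ is $\frac{D^\gamma w_\alpha(2^{-j}k)}{\gamma!}\,\bigl\langle f,\,(x-2^{-j}k)^\gamma\psi_{i,j,k}\bigr\rangle$, and the polynomial sits against $f$, not alone against $\psi_{i,j,k}$, so moment cancellation simply does not apply. What one actually gains is only the factor $2^{-j|\gamma|}$ from $|x-2^{-j}k|\lesssim 2^{-j}$ on the support, together with $|D^\gamma w_\alpha(2^{-j}k)|\le c_{w,\gamma}\,w_\alpha(2^{-j}k)$; what remains is a pairing of $f$ against $(x-2^{-j}k)^\gamma\psi_{i,j,k}$, a bump whose vanishing moments are degraded. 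For $f$ of low or negative smoothness, pairings against kernels without sufficient moment conditions are not controlled by the Besov quasi-norm, so the error is not automatically ``bounded in $b^s_{p,q}$ by the same weighted sequence norm'' as you assert. Closing this requires the local-means/atomic-decomposition machinery (or a reduction to large $s$), which is exactly the nontrivial content of \cite{HT05}; as written, your argument would fail precisely at the point where the hypothesis on $r$ is supposed to do its work.
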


\begin{re}
The proof of this proposition may be found in Haroske and Triebel
\cite{HT05}. One can also consult \cite{KLSS05} for historical
remarks.
\end{re}
Let $0< p,q\leq\infty.$ Motivated by Proposition \ref{Besov_des} we
will work with the following weighted sequence spaces
\begin{equation}\label{ellal}
\begin{split}
\ell_q(2^{js}\ell_p(w)):=\Bigg\{&
\lambda=(\lambda_{j,k})_{j,k}:~~\lambda_{j,k}\in\mathbb{C},\\
&\|\lambda|\ell_q(2^{js}\ell_p(w))\|= \Big( \sum\limits_{j=0}^\infty
2^{jsq}
\Big(\sum\limits_{k\in\mathbb{Z}^d}|\lambda_{j,k}\,w_{j,k}|^p
\Big)^{q/p}\Big)^{1/q}<\infty \Bigg\},
\end{split}
\end{equation}
(usual modification if $p=\infty$ and/or $q=\infty$), where
$w_{j,k}=w(2^{-j}k).$ If $s=0$ we will write $\ell_q(\ell_p(w))$. In
contrast to the norm defined in (\ref{Besov_ell}), the finite
summation on $i=1, 2, \ldots, 2^d-1$ is irrelevant and can be
dropped out.
\subsection{Main assertions}
For later estimates of widths of embeddings between related sequence
spaces, we introduce a second class of weights as follows.
\begin{definition}
We shall say, a measurable function
$\varphi\,:\,[1,\infty)\,\rightarrow\,(0,\infty)$ belongs to
$\mathcal{V}$ if the inequalities
\begin{equation}\label{def v}
0<\underline{\varphi}(t):=\inf_{s\in[1,\infty)}\frac{\varphi(ts)}{\varphi(s)},\quad\quad
\bar{\varphi}(t):=\sup_{s\in[1,\infty)}\frac{\varphi(ts)}{\varphi(s)}<\infty
\quad {\rm for\, all}~t\in[1,\infty),
\end{equation}
are satisfied and if $\underline{\varphi}$ and $\bar{\varphi}$ are
measurable.
\end{definition}
Now we recall two indices necessary for subsequent discussion as
follows,

$$\alpha_\varphi:=\inf\limits_{t>1}\frac{\log \bar{\varphi}(t)}{\log
t},\quad \beta_\varphi:=\sup\limits_{t>1}\frac{\log
\underline{\varphi}(t)}{\log t}.$$

\begin{definition}\label{w2}
A function $w :\,\mathbb{R}^d\,\rightarrow\,(0,\infty)$ belongs to
the class $\mathcal {W}_2$ if there exist positive constants $c_1$
and $c_2$ and a function $\varphi \in \mathcal{V}$ such that
\begin{equation*}
\begin{array}{cl}
c_1\,\leq\,w(x)\,\leq\,c_2, &{\rm for~ all}\,\, x,|x|\leq 1,\\
c_1\varphi(|x|)\,\leq\,w(x)\,\leq\,c_2\varphi(|x|),\quad &{\rm for~
all}\,\, x,|x|> 1.
\end{array}
\end{equation*}
We say that a function $\varphi \in \mathcal{V}$ is `associated
with' $w$ if these inequalities are satisfied.
\end{definition}
We define
$$\frac 1{p^*}:=\Big(\frac 1{p_2}-\frac 1{p_1}\Big)_+\ \ \ {\rm and}
\ \ \ \frac 1p:=\frac 1{p_2}-\frac 1{p_1}.$$ Then $\delta>d/p$ holds
obviously. The following characterization of the compactness of
embeddings in the general situation was described in \cite{KLSS06}.
\begin{prop}
Let $w\in \mathcal{W}_1 \cap\mathcal{W}_2$, and let $\varphi \in
\mathcal{V}$ be an associated function in the sense of Definition
\ref{w2}.\vspace{-0.2cm}
 \begin{enumerate}
\item[$(i)$]\ Let $p^*=\infty$. Then the embedding
\begin{equation}\label{bb}
B_{p_1,q_1}^{s_1}(\mathbb{R}^d, w)\hookrightarrow
B_{p_2,q_2}^{s_2}(\mathbb{R}^d)
\end{equation}
is compact if and only if $\delta> 0$ and $\lim_{t\rightarrow\infty}
\varphi(t)=\infty$.\vspace{-0.2cm}
\item[$(ii)$]\ Let $0<p^*<\infty$. Then the embedding (\ref{bb}) is compact if
and only if
$$\delta > d / {p^*} ~ {\rm and} ~ \int_1^\infty \varphi(t)^{-p^*}t^d\frac{dt}t
<\infty.$$
 \end{enumerate}
\end{prop}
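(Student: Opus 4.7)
My plan is to apply the wavelet characterization of Proposition~\ref{Besov_des} to transfer the compactness question into one about weighted sequence spaces, and then analyse the latter via a double dyadic decomposition in the frequency index $j$ and in the spatial scale $|k|/2^j$. Under this wavelet isomorphism, the embedding (\ref{bb}) is equivalent to
\begin{equation*}
\ell_{q_1}\bigl(2^{j\sigma_1}\ell_{p_1}(w_{j,k})\bigr)\hookrightarrow \ell_{q_2}\bigl(2^{j\sigma_2}\ell_{p_2}\bigr),
\end{equation*}
with $\sigma_i=s_i+d(1/2-1/p_i)$ (so that $\sigma_1-\sigma_2=\delta$) and $w_{j,k}=w(2^{-j}k)$. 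Since $w\in\mathcal{W}_2$, on each shell $S_{j,\ell}=\{k\in\mathbb{Z}^d:2^\ell\le 1+|2^{-j}k|<2^{\ell+1}\}$, $\ell\ge 0$, one has $w_{j,k}\sim\varphi(2^\ell)$ and $|S_{j,\ell}|\sim N_{j,\ell}:=2^{(j+\ell)d}$. Restricting the embedding to $S_{j,\ell}$ and using the elementary inequality $\|\cdot\|_{\ell_{p_2}^N}\le N^{1/p^*}\|\cdot\|_{\ell_{p_1}^N}$ then gives an operator norm at most $C\,2^{-j\delta}\varphi(2^\ell)^{-1}N_{j,\ell}^{1/p^*}=C\,2^{-j(\delta-d/p^*)}2^{\ell d/p^*}\varphi(2^\ell)^{-1}$, with the convention $d/p^*=0$ when $p^*=\infty$.

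For \emph{sufficiency}, I would fix large $J,L$ and approximate the embedding by the finite-rank operator that keeps only wavelet indices with $j\le J$ and $k\in\bigcup_{\ell\le L}S_{j,\ell}$. Summing the shell estimates over $j>J$ or $\ell>L$ (absorbing the $q_1,q_2$-dependence into constants via H\"older, as all exponential gaps are strict), the remainder is controlled in case~(i) by a convergent geometric series in $j$ times $\sup_{\ell>L}\varphi(2^\ell)^{-1}$, and in case~(ii) by a convergent geometric series in $j$ times $\bigl(\sum_{\ell>L}\varphi(2^\ell)^{-p^*}2^{\ell d}\bigr)^{1/p^*}$, which, by the doubling property of $\varphi\in\mathcal{V}$, is comparable to a tail of $\int_1^\infty\varphi(t)^{-p^*}t^{d-1}\,dt$. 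Under the stated hypotheses these tails vanish as $J,L\to\infty$, so the embedding is the norm limit of finite-rank operators and is therefore compact.

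For \emph{necessity}, in case~(i) I would assume $\varphi$ does not tend to infinity, select $\ell_n\to\infty$ along which $\varphi(2^{\ell_n})$ stays bounded, and translate a fixed nonzero coefficient pattern into $S_{0,\ell_n}$; the resulting source-bounded sequence has supports escaping to infinity, so no subsequence can converge in the target. In case~(ii), if $\delta\le d/p^*$ or the integral diverges, I would test the embedding on unit-norm inputs supported on carefully chosen subsets of single shells $S_{j,\ell}$; a joint choice of $(j,\ell)$ that saturates the bound $2^{-j(\delta-d/p^*)}2^{\ell d/p^*}\varphi(2^\ell)^{-1}$ keeps the target norms uniformly bounded below while the supports can be made pairwise disjoint, thereby obstructing precompactness. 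The main obstacle is precisely this last step: one must jointly optimise $(j,\ell)$ so that the frequency gain $2^{-j(\delta-d/p^*)}$ exactly cancels the counting factor $N_{j,\ell}^{1/p^*}$ weighted by $\varphi(2^\ell)$, and verify that whenever the integrability or the strict inequality $\delta>d/p^*$ fails, the extracted family is genuinely non-precompact; the interplay between the two scales of decomposition is what makes case~(ii) technically delicate.
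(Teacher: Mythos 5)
The paper offers no proof of this proposition --- it is quoted from \cite{KLSS06} --- so the only yardstick is the standard argument, which is essentially what you outline: wavelet transfer via Proposition \ref{Besov_des}, and then exactly the shell decomposition $I_{j,i}$ with $|I_{j,i}|\sim 2^{(j+i)d}$ and $w(2^{-j}k)\sim\varphi(2^{i})$ that the paper itself deploys in Section 3 (cf.\ (\ref{Ij0}), (\ref{Iji}), (\ref{rankpiece})). The skeleton --- finite-rank truncation in $(j,\ell)$ for sufficiency, disjointly supported test families escaping to infinity for necessity --- is sound, and your necessity constructions (translated patterns along shells where $\varphi$ stays bounded in case (i); normalized constant vectors on $S_{j,0}$ to rule out $\delta\le d/p^*$, and divergence of the level-zero diagonal operator to rule out a divergent integral in case (ii)) are the right ones.

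There is, however, one genuine slip in the sufficiency step for case (ii). You propose to control the remainder by ``summing the shell estimates,'' but the literal sum of the shell operator norms is $\sum_{\ell>L}2^{\ell d/p^*}\varphi(2^{\ell})^{-1}$, and this can diverge even when $\int_1^\infty\varphi(t)^{-p^*}t^{d}\,\frac{dt}{t}<\infty$: take $\varphi(2^{\ell})^{-p^*}2^{\ell d}=\ell^{-2}$, so the integral converges while the shell-norm sum is $\sum_\ell \ell^{-2/p^*}=\infty$ whenever $p^*\ge 2$. The quantity you actually display, $\bigl(\sum_{\ell>L}\varphi(2^{\ell})^{-p^*}2^{\ell d}\bigr)^{1/p^*}$, is the correct bound, but it is not obtained by a triangle-inequality sum over shells; you must treat each frequency level $j$ as a single diagonal (block-diagonal) operator from $\ell_{p_1}$ into $\ell_{p_2}$ and use H\"older, i.e.\ the $\ell_{p^*}$-aggregation of block norms that underlies Lemma \ref{diagoper_ag}. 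With that repair the tail estimate is $2^{-j(\delta-d/p^*)}\bigl(\sum_{\ell>L}\varphi(2^{\ell})^{-p^*}2^{\ell d}\bigr)^{1/p^*}$ per level, and the geometric sum over $j$ closes the argument. A second, smaller omission: the necessity of $\delta>0$ in case (i) (and of $\delta>d(1/p_1-1/p_2)$ for boundedness at all) is never argued; it does not follow from your escape-to-infinity construction and must come from the local, unweighted theory --- in this paper it is hidden in the standing assumption (\ref{emB_con}).
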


For $0<p\le\infty,$ we set\vspace{0.2cm}

$p^\prime=
\begin{cases}
\frac p{p-1}\quad &{\rm if}\ 1<p<\infty,\\
1 &{\rm if}\ p=\infty,\\
\infty &{\rm if}\ 0<p\le 1.
\end{cases}
$\vspace{0.2cm}

We are now ready to formulate our main assertions.
\begin{theorem}\label{an}
 Let $w\in \mathcal{W}_1
\cap\mathcal{W}_2$, and let $\varphi \in \mathcal{V}$ be an
associated function in the sense of Definition \ref{w2}. Suppose
$\frac 1{\tilde{p}}=\frac\mu d+\frac 1{p_1},$ where
$\mu=\min(\beta_\varphi,\delta)$. Denote by $a_k$ the $k$th
approximation number of the Sobolev embedding (\ref{bb}).
\vspace{-0.2cm}
 \begin{enumerate}
\item[$(i)$]\
Suppose in addition that $0< p_1 \le p_2 \le 2$\,\,or\,\,$2\le
p_1\le p_2\le \infty.$ Then
$$
 a_k \sim
\begin{cases}
 \varphi\big(k^{\frac 1d}\big)^{-1} ~ & {\rm if} ~
0<\beta_\varphi\le\alpha_\varphi <\delta,
\\ k^{-\frac\delta d} ~ & {\rm if} ~ 0 <\delta<\beta_\varphi.
\end{cases}
$$
\vspace{-0.6cm}
\item[$(ii)$]\ Suppose that in addition to the general hypotheses,
$\tilde{p} < p_2 < p_1 \le \infty.$ Then
$$
a_k \sim
\begin{cases}
k^{\frac 1{p_2} - \frac 1{p_1}}\varphi\big(k^{\frac 1d}\big)^{-1} &
{\rm if} ~ \frac dp<\beta_\varphi\le\alpha_\varphi
<\delta,\\
 k^{-\frac\delta d+\frac 1{p_2} - \frac 1{p_1}} ~ & {\rm if} ~
\frac dp < \delta < \beta_\varphi.\end{cases}$$ \vspace{-0.6cm}
\item[$(iii)$]\ Suppose that
in addition to the general hypotheses, $0< p_1 < 2 < p_2\le \infty$.
Then
$$a_k \sim
\begin{cases}
 k^{\frac 1t - \frac 12}\varphi\big(k^{\frac 1d}\big)^{-1}
 & {\rm if} ~ \frac dt<\beta_\varphi\le\alpha_\varphi <\delta,
\\
 k^{-\frac\delta d+\frac 1t - \frac 12} ~& {\rm if} ~ \frac
dt < \delta < \beta_\varphi,
\\
 \varphi\big(k^{\frac t{2d}}\big)^{-1} ~& {\rm if} ~
0<\beta_\varphi\le\alpha_\varphi < \frac dt~ {\rm and}
~\alpha_\varphi <  \delta,
\\
 k^{-\frac t{2d}\delta} ~& {\rm if} ~  \delta
<\beta_\varphi~ {\rm and} ~\delta <\frac dt,
\end{cases}
$$ where
$t=\min(p_1^\prime,p_2)$.
\end{enumerate}
\end{theorem}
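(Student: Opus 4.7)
The plan is to discretize the embedding via wavelets and then reduce to counting approximation numbers of finite-dimensional block embeddings, matching upper and lower estimates coming from the classical $\ell_{p_1}^N \hookrightarrow \ell_{p_2}^N$ theory.

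\textbf{Step 1 (discretization).} By Proposition \ref{Besov_des} the embedding (\ref{bb}) is equivalent, up to isomorphism of the underlying quasi-Banach spaces, to the weighted sequence-space embedding
\[
\mathrm{id}:\ \ell_{q_1}\!\bigl(2^{j(s_1+d/2-d/p_1)}\ell_{p_1}(w_{j,k})\bigr)\hookrightarrow\ell_{q_2}\!\bigl(2^{j(s_2+d/2-d/p_2)}\ell_{p_2}\bigr),
\]
where $w_{j,k}=w(2^{-j}k)$. Because $w\in\mathcal{W}_1\cap\mathcal{W}_2$ is equivalent to $\varphi(|x|)$ for large $|x|$ and $\varphi\in\mathcal V$ is of doubling type via (\ref{def v}), I partition each level $j$ into dyadic annuli $\{k:\,|2^{-j}k|\sim 2^{\ell}\}$, $\ell\in\mathbb{N}_0$, on which $w_{j,k}\sim\varphi(2^{\ell})$ (and $\sim 1$ for $\ell=0$). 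This exhibits $\mathrm{id}$ as a weighted direct sum, over pairs $(j,\ell)$, of building blocks
\[
2^{-j\delta}\,\varphi(2^{\ell})^{-1}\,\mathrm{id}_{j,\ell}:\ \ell_{p_1}^{M_{j,\ell}}\hookrightarrow\ell_{p_2}^{M_{j,\ell}},\qquad M_{j,\ell}\sim 2^{(j+\ell)d}.
\]

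\textbf{Step 2 (upper bound).} For each of the three hypotheses on $(p_1,p_2)$ I would insert the known asymptotic formulas for $a_m(\mathrm{id}:\ell_{p_1}^N\to\ell_{p_2}^N)$ (trivial in case (i); the Pietsch formula with factor $m^{1/p_2-1/p_1}$ in case (ii); the Gluskin/Kashin estimate with the threshold at $m\sim N^{2/t}$ and $t=\min(p_1',p_2)$ in case (iii); see \cite{Pie78,Gl83,Vy08}). Combining these with the subadditivity (PS2) and multiplicativity (PS3) of $a_k$ yields, for every admissible rank allocation $(m_{j,\ell})$ with $\sum_{j,\ell}(m_{j,\ell}-1)\le k-1$, an upper bound in terms of the weighted block numbers. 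The resulting optimization assigns essentially all available rank to the finitely many blocks $(j,\ell)$ where the pay-off $2^{-j\delta}\varphi(2^{\ell})^{-1}$ dominates, leaving the other blocks evaluated at $m_{j,\ell}=1$. The balance point is precisely where the exponents $\delta$ and $\beta_\varphi$ cross (and, in case (iii), where they cross the finite-dimensional threshold $d/t$), which explains the bifurcation of the answer into sub-cases and the role of $\tilde p$ through $1/\tilde p=\mu/d+1/p_1$.

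\textbf{Step 3 (lower bound).} For the matching lower bound I would isolate a single well-chosen block: pick $(j_0,\ell_0)$ with $M_{j_0,\ell_0}\sim k$ in cases (i)--(ii), and with $M_{j_0,\ell_0}\sim k^{t/2}$ in case (iii) so as to land on the non-trivial side of the Gluskin threshold, then use (PS1) on the restriction to that block together with the finite-dimensional lower estimate. Tuning $(j_0,\ell_0)$ within each sub-case (large $j_0$ when $\delta$ is active, large $\ell_0$ when $\beta_\varphi$ is active) reproduces each branch of the statement.

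\textbf{Main obstacle.} The hard part is the rank-allocation optimization in case (iii), where three parameters ($\delta$, $\beta_\varphi$, $d/t$) compete and the dominating block $(j_0,\ell_0)$ changes across the thresholds $\beta_\varphi=d/t$ and $\delta=d/t$; one has to check that the upper-bound optimum and the lower-bound test block agree in every one of the four regimes, and in particular at the borderline matchings. A subsidiary technical nuisance is the presence of $q_1,q_2$ independent of $p_i$: since Proposition \ref{Besov_des} is stated with fixed $q$, these are absorbed either via the monotone embedding $\ell_{q_1}\hookrightarrow\ell_{q_2}$ (when applicable) or via a harmless perturbation of $s_1,s_2$ that does not affect the asymptotics — only constants.
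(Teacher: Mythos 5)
Your outline follows the paper's route in broad strokes (wavelet discretization via Proposition \ref{Besov_des}, decomposition into annular blocks $P_{j,i}$ of size $M_{j,i}\sim 2^{(j+i)d}$ with weight $\sim 2^{-j\delta}\varphi(2^i)^{-1}$, finite-dimensional estimates for $a_m(\mathrm{id}:\ell_{p_1}^N\to\ell_{p_2}^N)$, and single-block lower bounds via the factorization $a_k(\mathrm{id}_1)\le\|S_{j,i}\|\,\|T_{j,i}\|\,a_k(\mathrm{id})$). But your Step 2, as described, fails in exactly the regime you did not flag: case (iii) with $\min(\beta_\varphi,\delta)>d/t$. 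There the claimed rate is $k^{\frac 1t-\frac 12}\varphi(k^{\frac 1d})^{-1}$ with $\frac 1t-\frac 12<0$, i.e. strictly smaller than $\varphi(k^{\frac 1d})^{-1}$. If you give rank to only finitely many blocks and evaluate all others at their norms, the discarded blocks $(0,m)$, $m>M'$, alone contribute $\gtrsim\varphi(2^{M'})^{-1}$, and the rank budget $k$ forces $2^{M'}\lesssim (k/\log k)^{1/d}$, so your bound is stuck at $\varphi(k^{1/d})^{-1}$ and misses the factor $k^{\frac 1t-\frac 12}$. The correct scheme must distribute geometrically decaying ranks over \emph{infinitely many} blocks (each block at level $m$ needs rank $\gtrsim 2^{2md/t}$ to get past the Gluskin threshold, and since $t>2$ the total cost is still $o(k)$). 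The paper packages this via Pietsch's operator ideal quasi-norms $L_{s,\infty}^{(a)}$ and the $\rho$-triangle inequality (\ref{idealsinq}): it splits $\mathrm{id}=P+Q$ at level $M$, controls $L_{s,\infty}^{(a)}(P)$ with $\frac 1s>\frac 12$ and $L_{2,\infty}^{(a)}(Q)$ using Lemma \ref{rab}, and only in the complementary regime $\mu<d/t$ switches to an explicit three-part rank allocation (head killed, middle with $k_{j,i}=[k^{1-\varepsilon}2^{i\tau_1}2^{j\tau_2}]$ \`a la Skrzypczak--Vyb\'iral, tail at norm) — which is closer to what you describe.

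Two smaller points. First, your lower-bound prescription $M_{j_0,\ell_0}\sim k^{t/2}$ is only correct in the two sub-regimes of (iii) where $\min(\beta_\varphi,\delta)<d/t$; when $\min(\beta_\varphi,\delta)>d/t$ the paper tests with $m\sim N/4$, $N\sim k$, which lands on the other side of the threshold. Second, the Gluskin estimate with threshold $N^{2/t}$ excludes $(p_1,p_2)=(1,\infty)$ and $p_1<1$, $p_2=\infty$, where $t=\infty$ and the threshold degenerates; the paper needs a separate argument (Proposition \ref{an5}, based on Vyb\'iral's Lemma \ref{an1inf} with the $N^\lambda$ cutoff) that your sketch does not cover. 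Finally, note that the paper itself only proves case (iii) in full; cases (i) and (ii) are delegated to Skrzypczak's Propositions 13 and 15, for which your naive allocation does in fact suffice because there the block rates are monotone in the right direction.
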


\begin{theorem}\label{kn}
Let $w\in \mathcal{W}_1 \cap\mathcal{W}_2$, and let $\varphi \in
\mathcal{V}$ be an associated function in the sense of Definition
\ref{w2}. Assume $\theta = \frac{1/{p_1}-1/{p_2}}{1/2-1/{p_2}}$ and
$\frac 1{\tilde{p}}=\frac\mu d+\frac 1{p_1},$ where
$\mu=\min(\beta_\varphi,\delta)$. Denote by $d_k$ the $k$th
approximation number of the Sobolev embedding (\ref{bb}).
\vspace{-0.2cm}
 \begin{enumerate}
\item[$(i)$]\ Suppose in addition that $0< p_1 \le p_2 \le
2$\,\,or\,\,$2< p_1= p_2\le \infty.$ Then
$$
d_k \sim
\begin{cases}
\varphi\big(k^{\frac 1d}\big)^{-1} ~ &{\rm if} ~
0<\beta_\varphi\le\alpha_\varphi <\delta,\\
 k^{-\frac\delta d} ~ &{\rm if} ~ 0 <\delta<\beta_\varphi.
\end{cases}
$$
\vspace{-0.6cm}

\item[$(ii)$]\ Suppose that in addition to the general hypotheses,
$\tilde{p} < p_2 < p_1 \le \infty.$ Then
$$
d_k \sim
\begin{cases}
 k^{\frac 1{p_2} - \frac 1{p_1}}\varphi\big(k^{\frac
1d}\big)^{-1} ~ &{\rm if} ~ \frac dp<\beta_\varphi\le\alpha_\varphi
<\delta,\\
 k^{-\frac\delta d+\frac 1{p_2} - \frac 1{p_1}} ~ &{\rm if}
~ \frac dp < \delta < \beta_\varphi.
\end{cases}
$$
\vspace{-0.6cm}

\item[$(iii)$]\ Suppose that in addition to the general hypotheses, $0< p_1
< 2 < p_2\le \infty.$ Then
$$
d_k \sim
\begin{cases}
 k^{\frac 1{p_2} - \frac 12}\varphi\big(k^{\frac
1d}\big)^{-1} ~ &{\rm if} ~ \frac
d{p_2}<\beta_\varphi\le\alpha_\varphi <\delta,
\\
 k^{-\frac\delta d+\frac 1{p_2} - \frac 12} ~ &{\rm if} ~ \frac
d{p_2} < \delta < \beta_\varphi,
\\
 \varphi\big(k^{\frac {p_2}{2d}}\big)^{-1} ~ &{\rm if} ~
0<\beta_\varphi\le\alpha_\varphi < \frac d{p_2}$~ {\rm and}
~$\alpha_\varphi < \delta,
\\
 k^{-\frac {p_2}{2d}\delta} ~ &{\rm if} ~  \delta
<\beta_\varphi~ {\rm and} ~\delta <\frac d{p_2}.
\end{cases}
$$
\vspace{-0.6cm}

\item[$(iv)$]\ Suppose that in addition to the general hypotheses, $2\le p_1
< p_2\le \infty.$ Then
$$
d_k \sim
\begin{cases}
 k^{\frac 1{p_2} - \frac 1{p_1}}\varphi\big(k^{\frac
1d}\big)^{-1} ~ &{\rm if} ~ \frac
d{p_2}\theta<\beta_\varphi\le\alpha_\varphi <\delta,
\\
k^{-\frac\delta d+\frac 1{p_2} - \frac 1{p_1}} ~ &{\rm if} ~ \frac
d{p_2}\theta < \delta < \beta_\varphi,
\\
 \varphi\big(k^{\frac {p_2}{2d}}\big)^{-1} ~ &{\rm if} ~
0<\beta_\varphi\le\alpha_\varphi < \frac d{p_2}\theta~ {\rm and}
~\alpha_\varphi < \delta,
\\
 k^{-\frac {p_2}{2d}\delta}~ &{\rm if} ~  \delta
<\beta_\varphi~ {\rm and}~\delta <\frac d{p_2}\theta.
\end{cases}
$$\vspace{-0.2cm}
\end{enumerate}
\end{theorem}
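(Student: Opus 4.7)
I would follow the same template as the proof of Theorem~\ref{an}, substituting the Kolmogorov-number estimates for the building blocks in place of the approximation-number ones. By the wavelet isomorphism of Proposition~\ref{Besov_des}, the embedding (\ref{bb}) is equivalent to an identity map $\ell_{q_1}(2^{js_1}\ell_{p_1}(w))\hookrightarrow\ell_{q_2}(2^{js_2}\ell_{p_2})$. Partition the index set $\{(j,k):j\in\mathbb{N}_0,\,k\in\mathbb{Z}^d\}$ into dyadic bricks $I_{j,\nu}:=\{(j,k):|2^{-j}k|\sim 2^\nu\}$ for $\nu\in\mathbb{N}_0$, on which $w(2^{-j}k)\sim\varphi(2^\nu)$ and $|I_{j,\nu}|\sim N_{j,\nu}:=2^{(j+\nu)d}$. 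Acting on vectors supported on $I_{j,\nu}$, the embedding is a scalar $\lambda_{j,\nu}$ times ${\rm id}:\ell_{p_1}^{N_{j,\nu}}\to\ell_{p_2}^{N_{j,\nu}}$, where $\lambda_{j,\nu}$ collects the smoothness factor $2^{j(s_2-s_1)}$, the exponent shift $2^{jd(1/p_2-1/p_1)}$ and $\varphi(2^\nu)^{-1}$.

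\textbf{Upper and lower bounds.} For the upper bound, split a rank budget $k=\sum_{j,\nu}k_{j,\nu}$ over the bricks and apply the $p$-subadditivity (PS2) of $d_k$ with $p=\min(1,p_2,q_2)$:
$$d_k({\rm id})^p\;\preceq\;\sum_{j,\nu}\lambda_{j,\nu}^p\;d_{k_{j,\nu}}\!\big({\rm id}:\ell_{p_1}^{N_{j,\nu}}\to\ell_{p_2}^{N_{j,\nu}}\big)^p,$$
then optimise the allocation $(k_{j,\nu})$ against $(\lambda_{j,\nu},N_{j,\nu})$. For the lower bound, localise: factor ${\rm id}:\ell_{p_1}^{N_{j,\nu}}\to\ell_{p_2}^{N_{j,\nu}}$ through the global embedding via a natural injection on the source and projection on the target; the multiplicativity (PS3) yields
$$d_k\!\big({\rm id}:\ell_{p_1}^{N_{j,\nu}}\to\ell_{p_2}^{N_{j,\nu}}\big)\;\preceq\;\lambda_{j,\nu}^{-1}\,d_k({\rm id}),$$
and one chooses $(j,\nu)$ optimally for the given $k$. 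The finite-dimensional input is the sharp value of $d_k({\rm id}:\ell_{p_1}^N\to\ell_{p_2}^N)$: in parts (i) and (ii) it agrees up to constants with the approximation number, so the bounds may also be read off from Theorem~\ref{an}(i),(ii) and (\ref{acd}); in part (iii) the Kashin estimate $d_k\sim N^{1/p_2-1/2}$ on the relevant plateau replaces the exponent $1/t-1/2$ of Theorem~\ref{an}(iii) by $1/p_2-1/2$; in part (iv) Gluskin's sharp bound, whose breakpoint is governed by $\theta=(1/p_1-1/p_2)/(1/2-1/p_2)$, is what introduces $\theta$ into the critical threshold $d\theta/p_2$.

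\textbf{Main obstacle.} The substantive new difficulty, compared with Theorem~\ref{an}, is the $\theta$-regime in case (iv). Unlike the approximation numbers, the Kolmogorov numbers of ${\rm id}:\ell_{p_1}^N\to\ell_{p_2}^N$ for $2\le p_1<p_2$ exhibit a Gluskin-type plateau whose length is proportional to $N^{\theta(1-2/p_2)}$ rather than $N$; pushed through the brick-level optimisation this forces \emph{four} subcases according as the three exponents $\delta/d$, $\beta_\varphi/d$ and $d\theta/p_2$ interleave, and one must check that the upper-bound allocation $(k_{j,\nu})$ and the lower-bound optimal brick $(j,\nu)$ saturate at matching rates in each regime. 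Case (iii) runs along the same lines with the simpler critical value $d/p_2$ (formally, $\theta=1$), and (i),(ii) reduce directly to the corresponding items of Theorem~\ref{an}. The heart of the work is therefore the two-parameter optimisation balancing the $j$-direction smoothness decay against the $\nu$-direction weight decay, together with careful verification of the Gluskin-Kashin plateau endpoints.
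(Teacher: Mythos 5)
Your plan is essentially the paper's own: the authors prove Theorem \ref{kn} by rerunning the brick decomposition, operator-ideal summation and factorization machinery of Propositions \ref{an4}--\ref{an5} with the finite-dimensional Kolmogorov-number estimates of Gluskin in place of the approximation-number ones (deferring the details to \cite{ZF10,ZF11}), and it is precisely the $\theta$-governed plateau for $d_k({\rm id}:\ell_{p_1}^N\to\ell_{p_2}^N)$ with $2\le p_1<p_2$ that produces the four regimes of case (iv). The one caveat concerns your assertion that in case (ii) the finite-dimensional Kolmogorov numbers agree with the approximation numbers so the result can be read off from Theorem \ref{an}(ii) and (\ref{acd}): the paper explicitly notes that the formula of Lemma \ref{an2} fails for Kolmogorov numbers when $p_2<1$, so while (\ref{acd}) does give the upper bound, the matching lower bound in the quasi-Banach part of (ii) requires a separate argument rather than a direct transfer.
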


\begin{theorem}\label{gn}
Let $w\in \mathcal{W}_1 \cap\mathcal{W}_2$, and let $\varphi \in
\mathcal{V}$ be an associated function in the sense of Definition
\ref{w2}. Assume $\theta_1 = \frac{1/{p_1}-1/{p_2}}{1/{p_1}-1/2},\,$
and $\frac 1{\tilde{p}}=\frac\mu d+\frac 1{p_1}$ where
$\mu=\min(\beta_\varphi,\delta)$. Denote by $c_k$ the $k$th
approximation number of the Sobolev embedding (\ref{bb}).
\vspace{-0.2cm}
 \begin{enumerate}
\item[$(i)$]\ Suppose in addition that $2\le p_1\le p_2\le
\infty$\,\,or\,\,$0< p_1 = p_2< 2.$ Then
$$
c_k \sim
\begin{cases}
 \varphi\big(k^{\frac 1d}\big)^{-1} ~ &{\rm if} ~
0<\beta_\varphi\le\alpha_\varphi <\delta,
\\
 k^{-\frac\delta d} ~ &{\rm if} ~ 0 <\delta<\beta_\varphi.
\end{cases}$$
\vspace{-0.6cm}

\item[$(ii)$]\ Suppose that in addition to the general hypotheses,
$\tilde{p} < p_2 < p_1 \le \infty.$ Then
$$
c_k \sim
\begin{cases}
 k^{\frac 1{p_2} - \frac 1{p_1}}\varphi\big(k^{\frac
1d}\big)^{-1} ~ &{\rm if} ~ \frac dp<\beta_\varphi\le\alpha_\varphi
<\delta,
\\
 k^{-\frac\delta d+\frac 1{p_2} - \frac 1{p_1}} ~ &{\rm if}
~ \frac dp < \delta < \beta_\varphi.
\end{cases}
$$
\vspace{-0.6cm}

\item[$(iii)$]\ Suppose that in addition to the general hypotheses, $0< p_1
< 2 < p_2\le \infty.$ Then
$$
c_k \sim
\begin{cases}
 k^{\frac 12 - \frac 1{p_1}}\varphi\big(k^{\frac
1d}\big)^{-1} ~ &{\rm if} ~ \frac
d{p_1^\prime}<\beta_\varphi\le\alpha_\varphi <\delta,
\\
 k^{-\frac\delta d+\frac 12 - \frac 1{p_1}} ~ &{\rm if} ~
\frac d{p_1^\prime} < \delta < \beta_\varphi,
\\
 \varphi\big(k^{\frac {p_1^\prime}{2d}}\big)^{-1} ~ &{\rm
if} ~ 0<\beta_\varphi\le\alpha_\varphi < \frac d{p_1^\prime}~ {\rm
and} ~\alpha_\varphi < \delta,
\\
 k^{-\frac {p_1^\prime}{2d}\delta} ~ &{\rm if} ~  \delta
<\beta_\varphi~ {\rm and} ~\delta <\frac d{p_1^\prime}.
\end{cases}
$$
\vspace{-0.6cm}

\item[$(iv)$]\ Suppose that in addition to the general hypotheses, $0< p_1 <
p_2 \le 2.$ Then
$$
c_k \sim
\begin{cases}
 k^{\frac 1{p_2} - \frac 1{p_1}}\varphi\big(k^{\frac
1d}\big)^{-1} ~ &{\rm if} ~ \frac
d{p_1^\prime}\theta_1<\beta_\varphi\le\alpha_\varphi <\delta,
\\
k^{-\frac\delta d+\frac 1{p_2} - \frac 1{p_1}} ~ &{\rm if} ~ \frac
d{p_1^\prime}\theta_1 < \delta < \beta_\varphi,
\\
 \varphi\big(k^{\frac {p_1^\prime}{2d}}\big)^{-1} ~ &{\rm
if} ~ 0<\beta_\varphi\le\alpha_\varphi < \frac
d{p_1^\prime}\theta_1~ {\rm and} ~\alpha_\varphi < \delta,
\\
k^{-\frac {p_1^\prime}{2d}\delta} ~ &{\rm if} ~ \delta
<\beta_\varphi~ {\rm and} ~\delta <\frac d{p_1^\prime}\theta_1.
\end{cases}
$$\vspace{-0.2cm}
 \end{enumerate}
\end{theorem}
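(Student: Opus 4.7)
The plan is to follow the scheme of the proof of Theorem \ref{kn} closely and then exploit duality where possible. First I would apply the wavelet characterization Proposition \ref{Besov_des} to reduce the embedding (\ref{bb}) to the continuous sequence-space embedding $\mathrm{id}\colon\ell_{q_1}(2^{js_1}\ell_{p_1}(w))\hookrightarrow\ell_{q_2}(2^{js_2}\ell_{p_2})$. Second I would dyadically decompose this operator: at level $j$, partition $\mathbb{Z}^d$ into shells $\Delta_{j,\nu}=\{k\in\mathbb{Z}^d:\,2^{\nu-1}\le 1+|2^{-j}k|<2^\nu\}$ on which $w(2^{-j}k)\sim\varphi(2^\nu)$, so that each shell operator $\mathrm{id}_{j,\nu}$ is, up to a scalar of order $2^{j(s_2-s_1)}\varphi(2^\nu)^{-1}$, an unweighted finite-dimensional embedding $\ell_{p_1}^N\to\ell_{p_2}^N$ of dimension $N\sim 2^{(j+\nu)d}$.

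For the upper bounds I would combine the sharp finite-dimensional Gelfand estimates for $\mathrm{id}\colon\ell_{p_1}^N\to\ell_{p_2}^N$ due to Gluskin \cite{Gl83} and the quasi-Banach refinements of Vyb\'iral \cite{Vy08} with the subadditivity (PS2) and multiplicativity (PS3) of $c_k$. In the Banach range, the most efficient route, especially in cases (iii) and (iv), is the duality formula $c_k(T)=d_k(T^*)$ of (\ref{duald*c}): the adjoint of (\ref{bb}) is, modulo the usual identifications, the embedding $B_{p_2',q_2'}^{-s_2}(\mathbb{R}^d)\hookrightarrow B_{p_1',q_1'}^{-s_1}(\mathbb{R}^d,w^{-1})$. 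The substitution $(p_1,p_2)\mapsto(p_2',p_1')$ matches case (i) (respectively (ii), (iii), (iv)) of Theorem \ref{gn} one-for-one with case (i) (respectively (ii), (iii), (iv)) of Theorem \ref{kn}; in particular the threshold $d/p_2$ of Theorem \ref{kn}(iii) becomes $d/p_1'$ in Theorem \ref{gn}(iii), and the parameter $\theta$ of Theorem \ref{kn}(iv) becomes $\theta_1$ in Theorem \ref{gn}(iv), matching the statement. For case (i) and the common sub-range of case (ii), the elementary bound $c_k\le a_k$ from (\ref{acd}) together with Theorem \ref{an} already provides the desired upper estimate.

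For the matching lower bounds I would, in each regime, localise to a single well-chosen shell $\mathrm{id}_{j,\nu}$ of the decomposition and invoke the sharp lower estimates for $c_k(\mathrm{id}\colon\ell_{p_1}^N\to\ell_{p_2}^N)$; the optimal balance between the smoothness factor $2^{j\delta}$ and the weight factor $\varphi(2^\nu)$ dictates the split between the ``smoothness-driven'' rates $k^{-\delta/d}$, $k^{-p_1'\delta/(2d)}$ and the ``weight-driven'' rates $\varphi(k^{1/d})^{-1}$, $\varphi(k^{p_1'/(2d)})^{-1}$. The main obstacle is the quasi-Banach range ($p_i<1$), where duality is unavailable and the dualisation step of the Banach argument must be replaced by a direct sequence-space construction; one has to verify that the quasi-norm analogues of the Gluskin/Vyb\'iral estimates, combined with the $p$-triangle version of (PS2), still yield the exponents claimed in the four sub-regimes of (iii) and (iv). Tracking the endpoints (in particular when $\alpha_\varphi=\beta_\varphi$ lies exactly at the thresholds $\delta$, $d/p$, $d/p_1'$, $d\theta_1/p_1'$) is the most delicate part of the argument.
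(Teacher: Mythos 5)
Your proposal is correct and follows essentially the same route as the paper: the Banach-space case is obtained by dualizing Theorem \ref{kn} via (\ref{dualc*d})--(\ref{duald*c}) (your bookkeeping of the substitution $(p_1,p_2)\mapsto(p_2',p_1')$, turning the threshold $d/p_2$ into $d/p_1'$ and $\theta$ into $\theta_1$, matches the paper's case correspondence exactly), while the quasi-Banach case is handled by the direct shell decomposition on the wavelet sequence spaces with the operator-ideal/$p$-subadditivity machinery and Lemma \ref{rab}, which is precisely the argument the paper delegates to \cite{ZF10,ZF11}. The only ingredient you leave implicit that the paper makes explicit is that the exact formula of Lemma \ref{an2} remains valid for Gelfand numbers in the quasi-Banach range (Pietsch, Section 11.11.4), which supplies the lower bounds in case (ii).
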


\begin{re}
We shift the proofs of the above three theorems to Subsection
\ref{akg}. We wish to mention that all of the results herein do not
depend on the microscopic parameters\ $q_1$ and\ $q_2$.
\end{re}

\begin{re}
From Definition \ref{BF}, we know that an operator $f\mapsto w f$ is
an isomorphic mapping from $B_{p,q}^{s}(\mathbb{R}^d, w)$ onto
$B_{p,q}^{s}(\mathbb{R}^d)$. And it holds that
\begin{equation*}
s_k({\rm id}, ~B_{p_1,q_1}^{s_1}(\mathbb{R}^d, w_1),~
B_{p_2,q_2}^{s_2}(\mathbb{R}^d, w_2)) ~ \sim ~ s_k({\rm id},
~B_{p_1,q_1}^{s_1}(\mathbb{R}^d, w_1/w_2),~
B_{p_2,q_2}^{s_2}(\mathbb{R}^d)),
\end{equation*}
where\ $w_1$\ and $w_2$ are admissible weight functions, and $s_k$
denotes any of the three quantities\ $a_k,\, d_k$ or $c_k$.
Therefore, without loss of generality we can assume that the target
space is an unweighted space.
\end{re}

\section{Widths in sequence spaces}
In this section, we give the proofs of our main assertions stated
above in terms of embeddings of related sequence spaces.
\subsection{Approximation numbers of sequence spaces}\label{ans}
To begin with, we shall recall some lemmata. Lemma \ref{an1} follows
trivially from results of Gluskin \cite{Gl83} and Edmunds and
Triebel \cite{ET96}.
\begin{lemma}\label{an1}
 Let $N\in\mathbb{N}$ and $k\le\frac{N}{4}$.
\\ $(i)$ If $0< p_1\le p_2\le 2$~or~$2\le p_1\le p_2\le \infty$~ then
$$a_{k}\big({\rm id}, \ell_{p_1}^N, \ell_{p_2}^N\big)\sim 1.$$
$(ii)$ If $1\le p_1 < 2 < p_2\le \infty$ and $(p_1, p_2) \neq (1,
\infty)$\, then
$$a_{k}\big({\rm id}, \ell_{p_1}^N, \ell_{p_2}^N\big)\sim \min\big(1, N^{1/t}k^{-1/2}\big).$$
where $\frac 1t= \frac 1{\min(p_1^\prime, p_2)}.$
\\ $(iii)$ If
$0<  p_1< 1$ and $ 2 < p_2< \infty$\, then
$$a_{k}\big({\rm id}, \ell_{p_1}^N, \ell_{p_2}^N\big)\sim
\min\big(1, N^{1/{p_2}}k^{-1/2}\big).$$
\end{lemma}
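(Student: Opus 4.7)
The plan is to deduce Lemma~\ref{an1} directly from Gluskin's sharp estimates \cite{Gl83} on approximation numbers of identities between finite-dimensional $\ell_p$-spaces, together with the quasi-Banach extensions recorded in Edmunds and Triebel \cite{ET96}. In all three parts the trivial upper bound $a_{k}({\rm id},\ell_{p_1}^{N},\ell_{p_2}^{N})\le\|{\rm id}\|=1$, which holds because $p_1\le p_2$ implies $\|x\|_{p_2}\le\|x\|_{p_1}$ on $\mathbb{R}^{N}$, combined with property (PS1), already disposes of one of the two quantities under the minimum in (ii) and (iii).

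For part (i), it remains to prove the lower bound $a_{k}\succeq 1$ for $k\le N/4$. The standard argument goes as follows: if $A\in\mathcal{L}(\ell_{p_1}^{N},\ell_{p_2}^{N})$ has ${\rm rank}(A)<k$, then ${\rm id}-A$ coincides with the identity on a subspace $M\subset\ell_{p_1}^{N}$ of codimension less than $k\le N/4$; a volume comparison of the unit balls of $\ell_{p_1}^{N}$ and $\ell_{p_2}^{N}$ restricted to such a large-dimensional subspace forces $\|({\rm id}-A)|_{M}\|\succeq 1$, whence $\|{\rm id}-A\|\succeq 1$. This is a direct consequence of Gluskin's volume bounds in \cite{Gl83}; cf.\ also \cite{ET96}.

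For part (ii) I would simply restate Gluskin's theorem in its original Banach-space formulation: the non-trivial upper bound $a_{k}\preceq N^{1/t}k^{-1/2}$ follows from a Kashin-type random-subspace construction, while the matching lower bound uses a volume/duality argument combining Gelfand and Kolmogorov numbers. The exceptional case $(p_1,p_2)=(1,\infty)$ is excluded because there an additional logarithmic factor appears. Part (iii) is the quasi-Banach analogue for $0<p_1<1$. For the upper bound one factors ${\rm id}:\ell_{p_1}^{N}\to\ell_{p_2}^{N}$ through $\ell_{1}^{N}$; since $\|{\rm id}:\ell_{p_1}^{N}\to\ell_{1}^{N}\|=1$, the multiplicativity property (PS3) combined with part (ii) applied to $(1,p_2)$ yields $a_{k}\preceq N^{1/p_2}k^{-1/2}$. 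The matching lower bound is the main obstacle: it is not implied by factorization and has to be obtained either by testing against suitably chosen indicator-type vectors of varying support, or by invoking the extension of Gluskin's estimates to the quasi-Banach range worked out in \cite{ET96}, from which one can then read off the required estimate.
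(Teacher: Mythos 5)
The paper offers no argument for this lemma beyond the citation ``follows trivially from results of Gluskin \cite{Gl83} and Edmunds and Triebel \cite{ET96}'', so at the level of strategy you are doing exactly what the authors do. The trouble is that the one place where you try to supply an actual argument --- the lower bound in part (i) --- is wrong. Restricting ${\rm id}-A$ to $M=\ker A$ and estimating $\sup_{x\in M,\,\|x\|_{p_1}\le 1}\|x\|_{p_2}$ from below is precisely the proof that $a_k\ge c_k$, i.e.\ it can only ever recover the Gelfand number, and in the regime of part (i) with $p_1<p_2$ the Gelfand number is genuinely smaller than the approximation number. Concretely, for ${\rm id}:\ell_1^N\to\ell_2^N$ the Kashin/Garnaev--Gluskin theorem gives $c_k\sim\min\bigl(1,\sqrt{\log(N/k+1)/k}\bigr)$, which at $k\sim N/4$ is of order $N^{-1/2}$: there exist subspaces $M$ of codimension $N/4$ (indeed random ones) on which $\|x\|_2\le CN^{-1/2}\|x\|_1$. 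So the assertion that a volume comparison forces $\|({\rm id}-A)|_M\|\succeq 1$ is false, and no volume argument on sections can rescue it (sections of $B_1^N$ of proportional dimension $m$ have volume$^{1/m}\sim 1/m$ versus $1/\sqrt m$ for $B_2^N$, yielding only the bound $N^{-1/2}$ that the Gelfand width actually attains).

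The correct lower bound in part (i) is a genuinely ``linear'' fact that does not reduce to widths of subspaces: e.g.\ the exact formula $a_k({\rm id},\ell_1^N,\ell_2^N)=\bigl((N-k+1)/N\bigr)^{1/2}$ (Pietsch \cite{Pie78}, Section 11.11), proved by a trace-type argument testing a rank-$(k-1)$ approximant against all coordinate vectors $e_1,\dots,e_N$ simultaneously; since $\pm e_i$ lie in $B_{p_1}^N$ for every $p_1>0$, this transfers to $a_k({\rm id},\ell_{p_1}^N,\ell_2^N)\succeq 1$ for $k\le N/4$, and then to general $p_1\le p_2\le 2$ via $a_k({\rm id},\ell_{p_1}^N,\ell_2^N)\le\|{\rm id}:\ell_{p_2}^N\to\ell_2^N\|\,a_k({\rm id},\ell_{p_1}^N,\ell_{p_2}^N)$; the case $2\le p_1\le p_2$ follows by duality. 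Your parts (ii) and (iii) are consistent with the sources the paper invokes (the factorization through $\ell_1^N$ for the upper bound in (iii) is fine, with $t=\min(1',p_2)=p_2$), and deferring the lower bound in (iii) to the quasi-Banach extension in \cite{ET96,Gl83} is no less complete than what the paper itself does; but part (i) as written needs to be repaired along the lines above.
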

Lemma \ref{an2} in the case $1\le p_2 < p_1\le \infty$ may be found
in Pietsch \cite{Pie78}, Section 11.11.5, also in Pinkus
\cite{Pin85}(p.\,203). The proof may be directly generalized to the
quasi-Banach setting $0< p_2 < p_1\le \infty.$

\begin{lemma}\label{an2}
Let $0<  p_2 < p_1\leq \infty$ and $k\le N$. Then
$$a_{k}\left({\rm id}, \ell_{p_1}^N, \ell_{p_2}^N\right)
=(N - k + 1)^{1/{p_2}-1/{p_1}}.$$
\end{lemma}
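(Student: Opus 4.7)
The plan is to establish the identity by proving matching upper and lower bounds, which together force equality (since approximation numbers are non-negative reals). The target value $(N-k+1)^{1/p_2-1/p_1}$ is precisely the operator norm of the identity acting on the $(N-k+1)$-dimensional coordinate sections, so the bounds should hinge on coordinate projections and the Hölder/monotonicity inequality for $\ell_p$-norms (noting $\|x\|_{p_2}\ge\|x\|_{p_1}$ in finite dimensions whenever $p_2<p_1$, a fact that remains valid for $0<p_2<1$).

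For the upper bound, I would fix any subset $J\subset\{1,\dots,N\}$ with $|J|=k-1$ and take $A=P_J$, the coordinate projection onto $\mathrm{span}\{e_j:j\in J\}$. Then $\mathrm{rank}(A)=k-1<k$, and $\mathrm{id}-A=P_{J^c}$ factors through the $(N-k+1)$-dimensional complementary coordinate subspace. By the standard Hölder argument
$$
\|P_{J^c}x\|_{p_2}\le (N-k+1)^{1/p_2-1/p_1}\|P_{J^c}x\|_{p_1}\le (N-k+1)^{1/p_2-1/p_1}\|x\|_{p_1},
$$
which is valid without change for $0<p_2<p_1\le\infty$ because only the monotonicity of $\ell_p$-norms on $\mathbb{C}^{N-k+1}$ is used. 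This yields $a_k(\mathrm{id})\le (N-k+1)^{1/p_2-1/p_1}$.

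For the lower bound, let $A\in\mathcal{L}(\ell_{p_1}^N,\ell_{p_2}^N)$ with $\mathrm{rank}(A)<k$. Then $V:=\ker A$ has dimension $m\ge N-k+1$. The geometric core, following Pietsch \cite{Pie78} §11.11.5, is the assertion that on any $m$-dimensional subspace $V\subset\mathbb{C}^N$ there exists a nonzero $x\in V$ with $\|x\|_{p_2}\ge m^{1/p_2-1/p_1}\|x\|_{p_1}$. Granting this, one immediately obtains $\|\mathrm{id}-A\|\ge \|(\mathrm{id}-A)x\|_{p_2}/\|x\|_{p_1}=\|x\|_{p_2}/\|x\|_{p_1}\ge (N-k+1)^{1/p_2-1/p_1}$, and taking the infimum over $A$ finishes the proof.

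The main obstacle is verifying the subspace lemma in the quasi-Banach range $0<p_2<1$, where the usual duality argument for $c_k$ (which Pietsch invokes in the Banach case) is not directly available. My approach would be to bypass duality entirely by a constructive extremal argument: using Gaussian elimination, choose a basis of $V$ that is in reduced row-echelon form relative to a pivot set $J\subset\{1,\dots,N\}$ with $|J|=m$, so that $V$ is parametrized by $\mathbb{C}^J$ via a linear map whose restriction to the pivot coordinates is the identity; then an averaging argument over sign choices (or an optimization of $\|x\|_{p_2}^{p_2}/\|x\|_{p_1}^{p_1}$ on the compact set $\{x\in V:\|x\|_{p_1}=1\}$) produces the desired $x$. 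Since this construction only uses the homogeneity and the $p$-triangle inequality of the quasi-norm, it carries over verbatim from the case $p_2\ge 1$ treated by Pietsch, as anticipated in the lemma's statement.
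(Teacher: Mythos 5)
Your overall architecture is the standard one (the paper itself gives no proof, only the citation to Pietsch, Section 11.11.5, and Pinkus, p.~203, together with the claim that the argument extends to $0<p_2<1$): the upper bound via a rank-$(k-1)$ coordinate projection plus H\"older on the remaining $N-k+1$ coordinates is correct as written, and the lower bound correctly reduces to the subspace lemma that every $m$-dimensional $V\subset\mathbb{C}^N$ contains a nonzero $x$ with $\|x\|_{p_2}\ge m^{1/p_2-1/p_1}\|x\|_{p_1}$, applied to $V=\ker A$, $m\ge N-k+1$.

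The gap is in your proof of that subspace lemma. The reduced row-echelon construction fails: take $V=\mathrm{span}\{e_1+Me_{m+1},\,e_2,\dots,e_m\}$ with $M$ large. The pivot set is forced to be $\{1,\dots,m\}$, and every $\pm1$-combination $x=\sum_i\epsilon_i v^{(i)}$ of the echelon basis has $x_1=\pm1$, hence $x_{m+1}=\pm M$, so $\|x\|_{p_2}/\|x\|_{p_1}=(m+M^{p_2})^{1/p_2}(m+M^{p_1})^{-1/p_1}\to1$ as $M\to\infty$, far below $m^{1/p_2-1/p_1}$; no averaging over signs repairs this, since the obstruction is deterministic (the extremal vector in this $V$ is essentially $(1/M,1,\dots,1)$, whose unimodular coordinates are \emph{not} the pivots). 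The parenthetical alternative --- ``optimize $\|x\|_{p_2}^{p_2}$ over $\{x\in V:\|x\|_{p_1}=1\}$'' --- merely names the quantity to be bounded, and the variational argument one would run at a maximizer uses convexity of $t\mapsto|a+tb|^{p_2}$, which genuinely breaks for $0<p_2<1$; so ``carries over verbatim'' is not justified. The clean fix, valid for all $0<p_2<p_1\le\infty$, is to take $x$ to be an extreme point of the compact convex set $K=\{x\in V:\|x\|_\infty\le1\}$: if $J=\{i:|x_i|=1\}$ had $|J|<m$, the subspace of $V$ vanishing on $J$ would contain some $y\ne0$ with $x\pm\varepsilon y\in K$ for small $\varepsilon$, contradicting extremality; hence $|J|\ge m$, and with $S=\|x\|_{p_2}^{p_2}\ge m$ and $T=\|x\|_{p_1}^{p_1}\le S$ one gets $S\ge m^{1-p_2/p_1}T^{p_2/p_1}$, i.e.\ $\|x\|_{p_2}\ge m^{1/p_2-1/p_1}\|x\|_{p_1}$. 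This argument never invokes convexity of the target quasi-norm, which is precisely why the formula survives in the quasi-Banach range.
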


The following lemma is taken from  \cite{Sk05}.
\begin{lemma}\label{an3}  Assume that $1\le p_1 < 2 < p_2< \infty$
and $(p_1, p_2) \neq (1, \infty)$. Then there is a positive constant
$C$ independent of $N$ and $k$ such that
\begin{equation}\label{alp2p}
a_{k}\Big({\rm id}, ~\ell_{p_1}^N, ~\ell_{p_2}^N\Big)\leq C\,
\left\{
\begin{array}{ll}
1 & {\rm if}\, k\leq N^{2/t},\\
N^{1/t}k^{-1/2}\quad & {\rm if}\, N^{2/t}< k \leq
N,\\
0 & {\rm if}\, n>N,
\end{array}\right.
\end{equation}
where $\frac 1t= \frac 1{\min(p_1^\prime, p_2)}.$
\end{lemma}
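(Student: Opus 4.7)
The target estimate is essentially a piecewise unfolding of the equivalence $a_k \sim \min(1, N^{1/t}k^{-1/2})$ from Lemma \ref{an1}(ii), with the vanishing for $k > N$ forced by the rank of the identity. My plan is to treat the three regimes separately, using the rank property and the trivial norm estimate for the endpoints, and reducing the substantive middle case to Lemma \ref{an1}(ii) via monotonicity.

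For $k > N$, property (PS4) gives $a_k({\rm id}) = 0$ since the identity has rank $N < k$. For $1 \le k \le N^{2/t}$, the pointwise inequality $\|x\|_{p_2} \le \|x\|_{p_1}$ (valid because $p_1 \le p_2$) gives $\|{\rm id}\| = 1$, so monotonicity (PS1) yields $a_k({\rm id}) \le 1$. For the main range $N^{2/t} < k \le N$, I split at $k = \lfloor N/4 \rfloor$: in the subrange $N^{2/t} < k \le \lfloor N/4 \rfloor$ (when nonempty), Lemma \ref{an1}(ii) applies directly and gives $a_k({\rm id}) \le C\min(1, N^{1/t}k^{-1/2}) = CN^{1/t}k^{-1/2}$, since $k > N^{2/t}$ forces the second argument of the minimum below one; in the subrange $\lfloor N/4 \rfloor < k \le N$, monotonicity gives $a_k({\rm id}) \le a_{\lfloor N/4 \rfloor}({\rm id}) \le C'N^{1/t-1/2}$, and the inequality $N^{-1/2} \le k^{-1/2}$ (from $k \le N$) upgrades this to $C'N^{1/t}k^{-1/2}$.

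The main obstacle is the $k^{-1/2}$ decay itself, which is not elementary; it rests on the delicate Gluskin-type construction underlying Lemma \ref{an1}(ii). In the present context we borrow that estimate from \cite{Gl83, ET96} as a black box, so our task reduces to the bookkeeping above, with the very small-$N$ regime (where $\lfloor N/4 \rfloor$ might degenerate to $0$) handled separately by absorbing finitely many values of $N$ into the constant $C$, since $t > 2$ implies $N^{1/t-1/2}$ is bounded below by a positive constant on any fixed range of small $N$.
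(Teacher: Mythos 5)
Your proposal is correct. The paper gives no proof of this lemma at all --- it simply cites \cite{Sk05} --- and your derivation (rank property for $k>N$, $\|{\rm id}\|=1$ for $k\le N^{2/t}$, Lemma \ref{an1}(ii) for $N^{2/t}<k\le\lfloor N/4\rfloor$, and monotonicity from $k=\lfloor N/4\rfloor$ together with $k^{-1/2}\ge N^{-1/2}$ for the remaining range, with small $N$ absorbed into the constant) is exactly the standard bookkeeping by which the cited Gluskin/Edmunds--Triebel estimate is unfolded into the stated piecewise bound.
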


\begin{lemma}\label{an1inf}
Let $0 < p\le 1$ and $N\in\mathbb{N}$. \vspace{-0.2cm}
\begin{enumerate}
\item[$(i)$] Let $0<\lambda<1$. Then there exists a constant
$C_\lambda>0$ depending only on $\lambda$ such that
\begin{equation}\label{an1infupp}
a_k\big({\rm id}, \ell_{p}^N, \ell_\infty^N\big)\le
\begin{cases}
1 & {\rm if}\ \ k\leq N^\lambda,\\
C_\lambda n^{-1/2}\quad & {\rm if}\ \ N^\lambda<k\le N,\\
0 & {\rm if}\ \ k>N. \end{cases}
\end{equation}
\vspace{-0.6cm}
\item[$(ii)$] There exists a constant
$C>0$ independent of $k$ such that for any  $k\in \mathbb{N}$
\begin{equation}\label{an1inflow}
a_{k}\big({\rm id}, \ell_{p}^{2k}, \ell_\infty^{2k}\big)\ge C
k^{-1/2}.
\end{equation}
\end{enumerate}
\end{lemma}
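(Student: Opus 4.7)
The plan is to handle (i) and (ii) by quite different arguments. For part (i), the boundary cases will be immediate from general $s$-number properties: when $k \le N^\lambda$, monotonicity (PS1) together with the elementary inclusion $\|x\|_\infty \le \|x\|_p$ (valid for every $0 < p \le 1$) gives $a_k \le \|\mathrm{id}\colon\ell_p^N\to\ell_\infty^N\| = 1$, and when $k > N$ the rank property (PS4) forces $a_k = 0$. The real work lies in the range $N^\lambda < k \le N$, where I would factor
\[
\mathrm{id}\colon\ell_p^N\to\ell_\infty^N
\;=\;
\bigl(\mathrm{id}\colon\ell_1^N\to\ell_\infty^N\bigr)\circ\bigl(\mathrm{id}\colon\ell_p^N\to\ell_1^N\bigr),
\]
using that $\|x\|_1 \le \|x\|_p$ for $p \le 1$ makes the second factor norm one. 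Multiplicativity (PS3) then reduces the problem to an estimate for $a_k(\mathrm{id}\colon\ell_1^N\to\ell_\infty^N)$, a classical Banach-space quantity controlled by Kashin / Garnaev--Gluskin type bounds of order $\sqrt{1 + \log(eN/k)}\,k^{-1/2}$. The restriction $k > N^\lambda$ is precisely what is needed so that the logarithmic correction is absorbed into a constant $C_\lambda$ depending only on $\lambda$, yielding the clean bound $C_\lambda\,k^{-1/2}$.

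For part (ii), I would use a Frobenius-norm argument that is insensitive to $p$. Fix any $A \in \mathcal{L}(\ell_p^{2k}, \ell_\infty^{2k})$ with $\mathrm{rank}(A) < k$, and set $M := I - A$. Since $\|e_j\|_p = 1$ for every standard basis vector $e_j$ of $\mathbb{R}^{2k}$, testing the operator norm produces the entrywise bound
\[
|M_{i,j}| \;\le\; \|M\colon\ell_p^{2k}\to\ell_\infty^{2k}\|,\qquad 1\le i,j\le 2k,
\]
and summing squares gives $\|M\|_F^2 \le (2k)^2\,\|M\|^2$. On the other hand, all $2k$ singular values of the identity on $\mathbb{R}^{2k}$ equal $1$, so the Mirsky (Eckart--Young) theorem yields
\[
\|M\|_F^2 \;\ge\; \sum_{i=k}^{2k}\sigma_i(I)^2 \;=\; k+1.
\]
Combining these two inequalities and taking the infimum over admissible $A$ gives $a_k(\mathrm{id}) \ge \sqrt{k+1}/(2k) \ge C\,k^{-1/2}$, as desired.

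The main technical difficulty will be in part (i): one has to verify carefully that the Kashin-type logarithmic factor inherited from the $\ell_1^N \to \ell_\infty^N$ estimate is indeed absorbed uniformly throughout the range $N^\lambda < k \le N$, and that no constants are lost in the quasi-Banach passage from $\ell_p^N$ to its Banach envelope $\ell_1^N$. Part (ii), by contrast, is a short linear-algebra computation, and the proof itself makes clear why the resulting lower bound is independent of $p$.
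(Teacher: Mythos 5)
Your part (ii) is correct and self-contained: the entrywise bound $|M_{i,j}|\le\|M\|$ obtained by testing on unit vectors, the Eckart--Young--Mirsky inequality $\|I-A\|_F^2\ge k+1$ for ${\rm rank}(A)<k$, and the comparison $\|M\|_F^2\le (2k)^2\|M\|^2$ combine to give $a_k\ge \frac{1}{2}k^{-1/2}$, uniformly in $p$. The boundary cases of part (i) ($k\le N^\lambda$ via $\|{\rm id}\|=1$, $k>N$ via the rank property) are also fine. For the record, the paper itself offers no proof of this lemma; it only cites Vyb\'iral [Vy08], so the comparison can only be against correctness, not against an internal argument.

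The genuine gap is in the main range $N^\lambda<k\le N$ of part (i), and it is twofold. First, the Kashin / Garnaev--Gluskin bounds of order $\sqrt{1+\log(eN/k)}\,k^{-1/2}$ are theorems about \emph{Gelfand and Kolmogorov} numbers, not approximation numbers: for $\ell_1^N\to\ell_2^N$ the analogous claim for $a_k$ is false, since $a_k({\rm id}\colon\ell_1^N\to\ell_2^N)=\sqrt{(N-k+1)/N}\sim 1$ for $k\le N/2$. To transfer a Gelfand-number bound to $a_k$ when the target is $\ell_\infty^N$ one must invoke the extension (injectivity) property of $\ell_\infty^N$, which yields $a_k(T)=c_k(T)$ for operators from a Banach space into $\ell_\infty^N$; you never do this, so even the logarithmic bound you quote is unjustified as stated. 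Second, and more seriously, the logarithm is \emph{not} absorbed into a constant on the range $N^\lambda<k\le N$: for $k$ near $N^\lambda$ one has $\log(eN/k)\approx(1-\lambda)\log N\approx\frac{1-\lambda}{\lambda}\log k\to\infty$. The restriction $k>N^\lambda$ only converts $\log N$ into $\frac{1}{\lambda}\log k$, so your route yields at best $a_k\le C_\lambda\sqrt{\log(ek)}\,k^{-1/2}$, strictly weaker than the lemma. Removing that last $\sqrt{\log k}$ is precisely the content of the statement: since $\|M\colon\ell_1^N\to\ell_\infty^N\|=\max_{i,j}|M_{i,j}|$, the quantity $a_k({\rm id}\colon\ell_1^N\to\ell_\infty^N)$ is the best entrywise approximation of $I_N$ by matrices of rank $<k$, and the clean $C_\lambda k^{-1/2}$ bound needs the sharper fact that the $\varepsilon$-rank of the identity is $O\bigl(\log N/(\varepsilon^2\log(1/\varepsilon))\bigr)$ --- the extra $\log(1/\varepsilon)$ is exactly what kills the residual logarithm --- or an equivalent construction such as the one in [Vy08]. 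Your reduction from general $0<p\le 1$ to $p=1$ via $\|x\|_1\le\|x\|_p$ is sound; the missing piece is the $p=1$ endpoint itself.
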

We refer to \cite{Vy08} for the proof.

Following Pietsch \cite{Pie87}, we associate to the sequence of the
$s$-numbers the following operator ideals, and for $0<r<\infty$, we
put
\begin{equation}\mathscr{L}_{r,\infty}^{(s)}:=\left\{T\in\mathcal{L}(X,
Y):\quad \sup\limits_{k\in\mathbb{N}}k^{1/r}s_k(T)<\infty\right\}.
\end{equation}
 Equipped with the quasi-norm
\begin{equation}\label{idealsdef}
L_{r,\infty}^{(s)}(T):=\sup\limits_{k\in\mathbb{N}}k^{1/r}s_k(T),
\end{equation}
the set $\mathscr{L}_{r,\infty}^{(s)}$ becomes a quasi-Banach space.
For such quasi-Banach spaces there always exists a real number
$0<\rho\leq 1$ such that
\begin{equation}\label{idealsinq}
L_{r,\infty}^{(s)}\left(\sum\limits_jT_j\right)^\rho\leq
\sum\limits_jL_{r,\infty}^{(s)}(T_j)^\rho
\end{equation}
holds for any sequence of operators
$T_j\in\mathscr{L}_{r,\infty}^{(s)}.$ Then we shall use the
notations $L_{r,\infty}^{(a)},~L_{r,\infty}^{(c)}$ and
$L_{r,\infty}^{(d)}$ for the approximation, Gelfand and Kolmogorov
numbers, respectively.

Next, we collect two lemmata about some useful estimates of certain
sums, which play key roles on the sequence space level later on.
Lemma \ref{ab} may be found in \cite{KLSS06}. And the proof of Lemma
\ref{rab} follows literally as in the proof of Lemma 4.9 in
\cite{KLSS06} with a simple substitution.

\begin{lemma}\label{ab}
Assume $\varphi\in\mathcal {V}$.
\\
$(i)$ For all $s\ge 1$ and $t\ge 1$ we have
\begin{equation}\label{phits}
\underline{\varphi}(t)\varphi(s) < \varphi(ts) \le
\bar{\varphi}(t)\varphi(s).
\end{equation}
$(ii)$ The following inequality holds:
\begin{equation}
-\infty < \beta_\varphi \le \alpha_\varphi < \infty.
\end{equation}
$(iii)$ For any $\varepsilon>0$ there exists a constant
$c=c(\varepsilon)\ge 1$ such that
$$c^{-1}s^{\beta_\varphi-\varepsilon}\le \frac {\varphi(s)}{\varphi(1)}
\le c s^{\alpha_\varphi+\varepsilon}\quad\quad for ~ all ~ s\ge 1.$$
\end{lemma}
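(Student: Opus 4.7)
My plan is to prove the three parts in order: (i) is essentially a tautology, (ii) is a Fekete--type limit argument based on sub/supermultiplicativity of $\bar\varphi$ and $\underline\varphi$, and (iii) is a standard consequence of (ii). For (i), I would simply unpack the definitions: for any fixed $s,t\ge 1$, the quotient $\varphi(ts)/\varphi(s)$ belongs to the family $\{\varphi(tu)/\varphi(u):u\ge 1\}$, hence lies between the infimum $\underline\varphi(t)$ and the supremum $\bar\varphi(t)$; multiplying by $\varphi(s)>0$ delivers the chain of inequalities. (The strict inequality on the left as printed is not forced by the argument and looks like a typographical slip; the natural statement has $\le$.)

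For (ii), the first step is to upgrade (i) to submultiplicativity of $\bar\varphi$ and supermultiplicativity of $\underline\varphi$. Using $\varphi(tsu)\le\bar\varphi(t)\varphi(su)\le\bar\varphi(t)\bar\varphi(s)\varphi(u)$, dividing by $\varphi(u)$, and taking the supremum over $u\ge 1$ yields $\bar\varphi(ts)\le\bar\varphi(t)\bar\varphi(s)$; the dual move with infima gives $\underline\varphi(ts)\ge\underline\varphi(t)\underline\varphi(s)$. Setting $f(x)=\log\bar\varphi(e^x)$ produces a measurable subadditive function on $[0,\infty)$, and the (continuous) Fekete lemma then provides
\[
\alpha_\varphi=\inf_{t>1}\frac{\log\bar\varphi(t)}{\log t}=\lim_{t\to\infty}\frac{\log\bar\varphi(t)}{\log t}.
\]
Finiteness from below follows from $\bar\varphi(t)\ge\varphi(t)/\varphi(1)>0$, coming from (i) with $s=1$. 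The parallel argument applied to $-\log\underline\varphi(e^x)$ gives the analogous limit for $\beta_\varphi$, with finiteness from above secured by $\underline\varphi(t)\le\bar\varphi(t)<\infty$. The inequality $\beta_\varphi\le\alpha_\varphi$ is then immediate from $\underline\varphi(t)\le\bar\varphi(t)$.

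For (iii), fix $\varepsilon>0$. By the infimum characterization of $\alpha_\varphi$ there exists $t_0>1$ with $\bar\varphi(t_0)\le t_0^{\alpha_\varphi+\varepsilon}$; iterating submultiplicativity gives $\bar\varphi(t_0^n)\le t_0^{n(\alpha_\varphi+\varepsilon)}$ for every $n\in\mathbb{N}$. For arbitrary $s\ge 1$ write $s=t_0^n r$ with $1\le r<t_0$, so that
\[
\bar\varphi(s)\le\bar\varphi(t_0^n)\bar\varphi(r)\le M\,s^{\alpha_\varphi+\varepsilon},\qquad M:=\sup_{1\le r\le t_0}\bar\varphi(r).
\]
Combining with (i) at $s=1$ yields $\varphi(s)/\varphi(1)\le\bar\varphi(s)\le c\,s^{\alpha_\varphi+\varepsilon}$; the lower bound via $\underline\varphi$ is completely symmetric.

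The step I expect to demand genuine care is the finiteness of $M$, i.e.\ the local boundedness of $\bar\varphi$ on $[1,t_0]$. This is not guaranteed by the bare definition of $\mathcal{V}$ (which only asks for pointwise finiteness and measurability), but it is a classical consequence of measurability combined with submultiplicativity, in the same spirit as the Cauchy functional inequality on $\mathbb{R}$. I would invoke this result explicitly rather than gloss over it; once it is in hand, the remainder of the proof is only an unwinding of definitions together with the Fekete limit.
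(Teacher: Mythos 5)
The paper offers no proof of this lemma to compare against: it is quoted from \cite{KLSS06} (``Lemma \ref{ab} may be found in \cite{KLSS06}''), so your argument has to be judged on its own. In substance it is the standard proof of the Matuszewska--index bounds and it is correct. Part (i) is indeed just the definition of $\underline{\varphi}$ and $\bar{\varphi}$ as infimum and supremum of the family $\{\varphi(tu)/\varphi(u):u\ge 1\}$, and you are right that the printed strict inequality on the left should be $\le$. For (ii), note that the pointwise inequality $\underline{\varphi}(t)\le\bar{\varphi}(t)$ by itself does not compare a supremum of one ratio with an infimum of the other, so the detour through the Fekete limits $\alpha_\varphi=\lim_{t\to\infty}\log\bar{\varphi}(t)/\log t$ and $\beta_\varphi=\lim_{t\to\infty}\log\underline{\varphi}(t)/\log t$ is genuinely needed --- you do take it, and then $\beta_\varphi\le\alpha_\varphi$ and the finiteness statements fall out. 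Part (iii) is the usual iteration of submultiplicativity.

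One refinement is needed at exactly the step you flagged. The classical boundedness theorem for measurable subadditive functions gives boundedness above only on compact subsets of the \emph{open} half-line; in multiplicative coordinates this is boundedness of $\bar{\varphi}$ on intervals $[a,b]$ with $a>1$, not on $[1,t_0]$. Near the left endpoint the theorem genuinely fails ($f(x)=1/x$ is measurable, subadditive, finite, and unbounded as $x\to 0^+$), so the quantity $M=\sup_{1\le r\le t_0}\bar{\varphi}(r)$ is not covered by the result you invoke. The repair is one line: write $s=t_0^{\,n-1}r$ with $r\in[t_0,t_0^2)$, so that the residual factor ranges over a compact set bounded away from $1$, where local boundedness of $\bar{\varphi}$ (and, for the symmetric lower estimate, positivity of $\inf_{[t_0,t_0^2]}\underline{\varphi}$) does follow from measurability plus sub/supermultiplicativity. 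With that adjustment your argument closes.
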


\begin{lemma}\label{rab}
Let $\rho>0$ and $\varphi\in\mathcal {V}$.
\\ $(i)$ If $\gamma>\alpha_\varphi+\eta,$\ then
\begin{equation}\label{ar}
\sup\limits_{M\ge1}2^{M\eta\rho}\varphi(2^M)^\rho\sum\limits_{m=0}^M
2^{-m\gamma\rho}2^{-(M-m)\eta\rho}\varphi(2^{M-m})^{-\rho}<\infty.
\end{equation}
\\ $(ii)$ If $\gamma<\beta_\varphi+\eta,$\ then
\begin{equation}\label{rb1}
\sup\limits_{M\ge1}2^{M\gamma\rho}\sum\limits_{m=0}^M
2^{-m\gamma\rho}2^{-(M-m)\eta\rho}\varphi(2^{M-m})^{-\rho}<\infty
\end{equation}
and
\begin{equation}\label{rbm}
\sup\limits_{M\ge1}\varphi(2^M)^\rho2^{-M\gamma\rho}\sum\limits_{m=M+1}^\infty
2^{m\gamma\rho}\varphi(2^m)^{-\rho}<\infty.
\end{equation}
\end{lemma}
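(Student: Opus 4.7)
The overarching strategy is to reduce each of the three sums to a finite geometric series by combining a single change of summation index with the polynomial growth/decay estimates supplied by Lemma \ref{ab}. Beyond Lemma \ref{ab}(iii), I shall need the uniform bounds $\bar\varphi(t)\le C_\varepsilon\, t^{\alpha_\varphi+\varepsilon}$ and $\underline\varphi(t)\ge c_\varepsilon\, t^{\beta_\varphi-\varepsilon}$ valid for all $t\ge 1$ and arbitrarily small $\varepsilon>0$; these follow from the sub- and super-multiplicativity of $\bar\varphi$ and $\underline\varphi$ (immediate from the definitions together with Lemma \ref{ab}(i)) and the characterization of $\alpha_\varphi,\beta_\varphi$ as infimum/supremum of $\log\bar\varphi(t)/\log t$ and $\log\underline\varphi(t)/\log t$, via a standard Fekete-type argument.

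For part (i), I will move the prefactor $2^{M\eta\rho}\varphi(2^M)^\rho$ inside the sum so that the $(M-m)$-terms partially collapse, leaving $\sum_{m=0}^{M}2^{-m(\gamma-\eta)\rho}\bigl[\varphi(2^M)/\varphi(2^{M-m})\bigr]^\rho$. Since both $2^m$ and $2^{M-m}$ are $\ge 1$, Lemma \ref{ab}(i) applied to $\varphi(2^M)=\varphi(2^m\cdot 2^{M-m})$ bounds the ratio by $\bar\varphi(2^m)\le C_\varepsilon\, 2^{m(\alpha_\varphi+\varepsilon)}$. What remains is a geometric series $\sum_m 2^{-m(\gamma-\eta-\alpha_\varphi-\varepsilon)\rho}$ whose ratio is $<1$ once $\varepsilon<\gamma-\eta-\alpha_\varphi$, which is possible precisely under the hypothesis $\gamma>\alpha_\varphi+\eta$; so the whole expression is uniformly bounded in $M$.

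For the first inequality of (ii), the substitution $k=M-m$ collapses the expression to $\sum_{k=0}^{M}2^{k(\gamma-\eta)\rho}\varphi(2^k)^{-\rho}$; Lemma \ref{ab}(iii) gives $\varphi(2^k)^{-\rho}\le C\,2^{-k(\beta_\varphi-\varepsilon)\rho}$, reducing matters to the geometric sum $\sum_k 2^{k(\gamma-\eta-\beta_\varphi+\varepsilon)\rho}$, uniformly bounded once $\varepsilon<\beta_\varphi+\eta-\gamma$. For the tail sum, I rewrite it as $\sum_{m>M}2^{(m-M)\gamma\rho}\bigl[\varphi(2^M)/\varphi(2^m)\bigr]^\rho$; Lemma \ref{ab}(i) with $s=2^M$, $t=2^{m-M}\ge 1$ yields $\varphi(2^M)/\varphi(2^m)\le\underline\varphi(2^{m-M})^{-1}\le c_\varepsilon^{-1}\,2^{-(m-M)(\beta_\varphi-\varepsilon)}$, and the resulting tail is a convergent geometric series, finishing the last claim.

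The principal difficulty is the preliminary step: promoting Lemma \ref{ab}(iii) to genuine uniform polynomial bounds on $\bar\varphi$ and $\underline\varphi$. If one tries to substitute Lemma \ref{ab}(iii) directly into the ratio $\varphi(2^M)/\varphi(2^{M-m})$, one ends up with an uncontrollable factor $2^{M(\alpha_\varphi-\beta_\varphi+2\varepsilon)}$ whenever $\alpha_\varphi>\beta_\varphi$, which would destroy uniformity in $M$. Exploiting the multiplicative structure of $\bar\varphi$ and $\underline\varphi$ rather than that of $\varphi$ alone is precisely what makes each change-of-variables cancellation come out clean.
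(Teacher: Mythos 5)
Your argument is correct and is essentially the proof the paper itself relies on: the paper establishes Lemma \ref{rab} only by reference to Lemma 4.9 of \cite{KLSS06}, and that argument proceeds exactly as yours does --- upgrade Lemma \ref{ab}(iii) to the uniform bounds $\bar\varphi(t)\le C_\varepsilon t^{\alpha_\varphi+\varepsilon}$ and $\underline\varphi(t)\ge c_\varepsilon t^{\beta_\varphi-\varepsilon}$ via sub-/supermultiplicativity of $\bar\varphi$ and $\underline\varphi$, then shift the summation index and sum a geometric series. Your remark that feeding Lemma \ref{ab}(iii) directly into the ratio $\varphi(2^M)/\varphi(2^{M-m})$ destroys uniformity in $M$ when $\alpha_\varphi>\beta_\varphi$ is precisely the reason the cited source works with $\bar\varphi$ and $\underline\varphi$ rather than $\varphi$. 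One caveat, which is a defect of the lemma as stated rather than of your proof: $\eta$ does not occur in (\ref{rbm}), and your geometric ratio there is $2^{(\gamma-\beta_\varphi+\varepsilon)\rho}$, so the tail converges only under $\gamma<\beta_\varphi$, not under the stated hypothesis $\gamma<\beta_\varphi+\eta$ (for $\varphi(t)=t^{\beta}$ and $\beta\le\gamma<\beta+\eta$ the inner sum in (\ref{rbm}) already diverges). Since the lemma is only ever applied to (\ref{rbm}) with $\gamma=d/t<\beta_\varphi$ in Substep 3.2, nothing downstream is affected, but you should state the condition $\gamma<\beta_\varphi$ explicitly for that last display.
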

\begin{re}
The relation given in (\ref{ar}) may also be represented simply as
\begin{equation*}
\sup\limits_{M\ge1}\varphi(2^M)^\rho\sum\limits_{m=0}^M
2^{m(\eta-\gamma)\rho}\varphi(2^{M-m})^{-\rho}<\infty,
\end{equation*}
\end{re}
which is essentially the same as (4.6) in Lemma 4.6 from
\cite{KLSS06}.

\begin{prop}\label{an4}
Suppose $1\le p_1 < 2 < p_2 \le \infty,$\,and $(p_1, p_2)\neq(1,
\infty)$. Set $t=\min(p_1^\prime,p_2)$. Let $w\in \mathcal{W}_1
\cap\mathcal{W}_2$, and let $\varphi \in \mathcal{V}$ be an
associated function in the sense of Definition \ref{w2}. Then
\begin{equation}
a_{k}\Big({\rm id}, \ell_{q_1}(2^{j\delta}\ell_{p_1}(w)),
\ell_{q_2}(\ell_{p_2})\Big) \sim \left\{
\begin{array}{ll}
 k^{\frac 1t - \frac
12}\varphi\big(k^{\frac 1d}\big)^{-1} & {\rm if}\,~ \frac
dt<\beta_\varphi\le\alpha_\varphi <\delta,
\\
k^{-\frac\delta d+\frac 1t - \frac 12} & {\rm if}\,~ \frac dt < \delta < \beta_\varphi,\\
\varphi\big(k^{\frac t{2d}}\big)^{-1} & {\rm if}\,~
0<\beta_\varphi\le\alpha_\varphi < \frac dt~ and ~\alpha_\varphi <
\delta,
\\
k^{-\frac t{2d}\delta} & {\rm if}\,~ \delta <\beta_\varphi$~ and
$~\delta <\frac dt.
\end{array}\right.
\end{equation}
\end{prop}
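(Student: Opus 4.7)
The plan is to localize the embedding to dyadic blocks in both the level index $j$ and a spatial shell index $\ell$, invoke the known finite-dimensional estimates of Lemma \ref{an3} (and Lemma \ref{an1}(ii) when $p_2 = \infty$) on each block, and reassemble them via the quasi-norm of the operator ideal $\mathscr{L}_{r,\infty}^{(a)}$ combined with the summation lemma \ref{rab}. Concretely, set
\[
\Omega_j^{(\ell)} = \{k \in \mathbb{Z}^d : 1 + |2^{-j}k| \sim 2^\ell\}, \qquad j, \ell \in \mathbb{N}_0.
\]
By Definition \ref{w2} together with Lemma \ref{ab}, the weight satisfies $w(2^{-j}k) \sim \varphi(2^\ell)$ on $\Omega_j^{(\ell)}$ and $|\Omega_j^{(\ell)}| \sim 2^{(j+\ell)d}$. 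The restriction of $\mathrm{id}$ to such a block is, after pulling the scalar $2^{-j\delta}\varphi(2^\ell)^{-1}$ out of the weighted norm, the finite-dimensional identity $\mathrm{id}_{j,\ell}: \ell_{p_1}^{N_{j,\ell}} \to \ell_{p_2}^{N_{j,\ell}}$ with $N_{j,\ell} \sim 2^{(j+\ell)d}$.

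For the upper bound, in each of the four regimes the target exponent $1/r$ is read off from the claimed rate (with $\varphi$ sandwiched between two power functions by Lemma \ref{ab}(iii)). Feeding
\[
a_m(\mathrm{id}_{j,\ell}) \lesssim 2^{-j\delta}\varphi(2^\ell)^{-1}\min\bigl(1,\, 2^{(j+\ell)d/t} m^{-1/2}\bigr)
\]
into the quasi-norm (\ref{idealsdef}) and using the $\rho$-triangle inequality (\ref{idealsinq}) reduces matters to checking the finiteness of double series of the form $\sum_{j,\ell} 2^{-j\delta\rho}\varphi(2^\ell)^{-\rho} 2^{(j+\ell)d\eta\rho}$, where the exponent $\eta$ depends on the regime. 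A successive application of Lemma \ref{rab}, first in $\ell$ for fixed $j$ and then in $j$, gives the required bound precisely under the case hypotheses: the position of $d/t$ relative to $\beta_\varphi,\alpha_\varphi$ selects which branch of the min is active, while $\alpha_\varphi < \delta$ or $\delta < \beta_\varphi$ controls the $j$-summation.

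The lower bound is obtained by realising the claimed rate on a single judiciously chosen block. Since $s$-numbers respect restriction to a coordinate subspace via (PS3), one has $a_k(\mathrm{id}) \ge 2^{-j\delta}\varphi(2^\ell)^{-1} a_k(\mathrm{id}_{j,\ell})$, and the lower part of Lemma \ref{an1}(ii) then supplies a matching factor. In the ``Sobolev-dominated'' regimes, yielding rates $k^{-\delta/d + 1/t - 1/2}$ and $k^{1/t-1/2}\varphi(k^{1/d})^{-1}$, I would pick a level $j$ with $2^{jd}\sim k$ and the central shell $\ell = 0$; in the ``weight-dominated'' regimes, producing rates $k^{-t\delta/(2d)}$ and $\varphi(k^{t/(2d)})^{-1}$, I would fix $j$ small and push $\ell$ outward with $2^{\ell d}\sim k^{t/2}$. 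In both cases Lemma \ref{ab}(iii) translates between $\varphi(2^\ell)$ and $\varphi(k^{1/d})$ or $\varphi(k^{t/(2d)})$.

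The main technical obstacle is the bookkeeping across four regimes: each requires a different choice of $r$, a different branch of Lemma \ref{an3} on each block, and a different half of Lemma \ref{rab}. The hypotheses relating $\alpha_\varphi,\beta_\varphi$ to $\delta$ and $d/t$ are exactly those needed for the double series above to converge, and matching the upper bound with the lower bound, especially ensuring that the rate $\varphi(k^{t/(2d)})^{-1}$ produced from an outer block cannot be improved, is the most delicate point.
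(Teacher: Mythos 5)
Your block decomposition, the reduction to the finite--dimensional estimates of Lemma \ref{an3}, and the use of the ideal quasi--norm (\ref{idealsdef}) with Lemma \ref{rab} do reproduce the paper's argument for the first two regimes ($\min(\beta_\varphi,\delta)>d/t$), where the paper splits $\mathrm{id}=P+Q$ at level $M$ with $k=2^{Md}$ and uses two different ideal exponents for the head and the tail. But your claim that the same machinery, with ``the target exponent $1/r$ read off from the claimed rate,'' also covers the last two regimes is a genuine gap. When $\alpha_\varphi<d/t$ (resp.\ $\delta<d/t$), the supremum defining $L_{s,\infty}^{(a)}(\mathrm{id},\ell_{p_1}^{N},\ell_{p_2}^{N})$ is attained at $m\sim N^{2/t}$, giving growth $N^{2/(ts)}$; the resulting double series $\sum_{j,i}2^{-j\delta\rho}\varphi(2^i)^{-\rho}2^{(j+i)\cdot 2d\rho/(ts)}$ diverges at the endpoint exponent ($\sum_j 1$ in the case $2d/(ts)=\delta$, and a divergent $i$--sum whenever $2d/(ts)>\beta_\varphi$, which is forced if one aims at $\varphi(k^{t/(2d)})^{-1}$ with $\alpha_\varphi>\beta_\varphi$). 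The ideal approach can only deliver $k^{-t(\beta_\varphi-\varepsilon)/(2d)}$, which does not imply the sharp bound $\varphi(k^{t/(2d)})^{-1}$. This is exactly the regime where \cite{Sk05} originally went astray; the paper instead follows \cite{SV09} and uses a three--zone decomposition (\ref{divi3}) with a non--uniform allocation of ranks $k_{j,i}=[k^{1-\varepsilon}2^{i\tau_1}2^{j\tau_2}]$ to the middle blocks and plain norm estimates for the tail. Some such per--block rank budgeting is indispensable and is missing from your plan.

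A second, more easily repaired, error is in your lower bounds: you pair the blocks with the regimes incorrectly. The rates carrying $\varphi$, namely $k^{1/t-1/2}\varphi(k^{1/d})^{-1}$ and $\varphi(k^{t/(2d)})^{-1}$, must be realised on an outer shell at level $j=0$ (block $I_{0,i}$, prefactor $\varphi(2^i)^{-1}$ with $2^{id}\sim k$ resp.\ $2^{id}\sim k^{t/2}$), while the rates carrying $\delta$, namely $k^{-\delta/d+1/t-1/2}$ and $k^{-t\delta/(2d)}$, come from the central block $I_{j,0}$ (prefactor $2^{-j\delta}$). Your grouping by the presence of the factor $k^{1/t-1/2}$ instead of by which of $\delta$, $\varphi$ dominates would, e.g., give only the lower bound $k^{-\delta/d+1/t-1/2}\ll k^{1/t-1/2}\varphi(k^{1/d})^{-1}$ in the regime $\alpha_\varphi<\delta$. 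The correct dichotomy for the choice of $m$ is $m\sim N/4$ versus $m\sim N^{2/t}$ according to whether $\min(\beta_\varphi,\delta)$ exceeds $d/t$ or not.
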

\begin{proof}
{\tt Step 1}.\quad Preparations. We denote
$$\Lambda:=\{\lambda=(\lambda_{j,\ell}):
\,\quad\lambda_{j,\ell}\in\mathbb{C},\,\quad j\in
\mathbb{N}_0,\,\ell\in\mathbb{Z}^d\},$$ and
$$ B_1=\ell_{q_1}(2^{j\delta}\ell_{p_1}(w)),\,\quad {\rm
and}\,\quad  B_2= \ell_{q_2}(\ell_{p_2}).$$
Let\,$I_{j,i}\subset\mathbb{N}_0\times\mathbb{Z}^d$ be such that
\begin{equation}\label{Ij0}
I_{j,0}:=\{(j,\ell):\, |\ell|\leq 2^j\},\quad j\in\mathbb{N}_0,
\end{equation}
\begin{equation}\label{Iji}
I_{j,i}:=\{(j,\ell):\, 2^{j+i-1}<|\ell|\leq 2^{j+i}\}, \quad i\in
\mathbb{N},\quad j\in\mathbb{N}_0.
\end{equation} Besides, let
$P_{j,i}:\Lambda\mapsto\Lambda$ be the canonical projection with
respect to $I_{j,i}$, i.e., for $\lambda\in\Lambda$, we put
\begin{equation*}
(P_{j,i}\lambda)_{u,v}:= \left\{
\begin{array}{ll}
\lambda_{u,v}\quad & (u,v)\in I_{j,i},\\
0\quad & {\rm otherwise},
\end{array}\right. \quad u\in
\mathbb{N}_0,\quad v\in\mathbb{Z}^d,\quad i\ge 0.
\end{equation*}
Then
\begin{equation}\label{rankpiece}M_{j,i}:=|I_{j,i}|\sim2^{(j+i)d},\end{equation}
\begin{equation}{\rm id}_\Lambda=
\sum\limits_{j=0}^\infty\sum\limits_{i=0}^\infty P_{j,i}.
\end{equation}
Due to Lemma \ref{ab} we have
\begin{equation}
w(2^{-j}\ell) \sim \varphi(2^{-j}|\ell|) \sim \varphi(2^i)\quad {\rm
if}\quad (j,\ell)\in I_{j,i}, \quad i \ge 0.
\end{equation}
Thanks to simple monotonicity arguments and explicit properties of
the approximation numbers we obtain
\begin{equation}
\begin{array}{ll}\label{andiscr}
a_k(P_{j,i},B_1,B_2)&\leq  \frac 1{\inf_{\ell\in
I_{j,i}}w(2^{-j}\ell)}2^{-j\delta}a_k({\rm id}, \ell_{p_1}^{M_{j,i}}, \ell_{p_2}^{M_{j,i}})\\
&\leq c\frac 1{2^{j\delta}\varphi(2^i)}a_k({\rm id},
\ell_{p_1}^{M_{j,i}}, \ell_{p_2}^{M_{j,i}}).
\end{array}
\end{equation}
{\tt Step 2}.\quad The operator ideal comes into play. To shorten
notations we shall put $\frac 1s=\frac 1r+\frac 12$\, for any
$s>0.$\, By (\ref{idealsdef}) and (\ref{andiscr}), we have
\begin{equation}\label{idealdiscr}
L_{s,\infty}^{(a)}(P_{j,i})\leq c\frac
1{2^{j\delta}\varphi(2^i)}L_{s,\infty}^{(a)}({\rm id},
\ell_{p_1}^{M_{j,i}}, \ell_{p_2}^{M_{j,i}})
\end{equation} The known asymptotic
behavior of the approximation numbers $a_k({\rm
id},\ell_{p_1}^{N},\ell_{p_2}^{N})$, cf. (\ref{alp2p}), and
(\ref{rankpiece}) yield that
\begin{equation}\label{idealse2}
L_{2,\infty}^{(a)}({\rm id}, \ell_{p_1}^{M_{j,i}},
\ell_{p_2}^{M_{j,i}})\leq
C2^{d(j+i)/t},\indent\indent\indent\indent\indent\indent
\end{equation}
\begin{equation}\label{ideals2}
L_{s,\infty}^{(a)}({\rm id}, \ell_{p_1}^{M_{j,i}},
\ell_{p_2}^{M_{j,i}})\leq C2^{d(j+i)(\frac 1t+\frac
1r)}\quad\quad\,\, {\rm if} \quad \frac 1s>\frac 12.
\end{equation}
\\{\tt Step 3}.\quad The estimate of $a_k({\rm id}, B_1, B_2)$ from
above in the case $\mu=\min(\beta_\varphi,\delta)>\frac dt.$\, For
any given $M\in\mathbb{N}_0$,\, we put
\begin{equation}\label{PQ}
P:=\sum\limits_{m=0}^M\sum\limits_{j+i=m}P_{j,i}\quad\quad{\rm
and}\quad\quad
Q=\sum\limits_{m=M+1}^\infty\sum\limits_{j+i=m}P_{j,i}.
\end{equation}
{\tt Substep 3.1.}\quad Estimate of $a_k(P, B_1, B_2)$. Let $\frac
1s>\frac 12.$\,Then in view of
(\ref{idealsinq}),\,(\ref{idealdiscr}) and (\ref{ideals2}), we have
\begin{equation}
\begin{array}{ll}\label{idealPrho}
L_{s,\infty}^{(a)}(P)^\rho&\leq
\sum\limits_{m=0}^M\sum\limits_{j+i=m}L_{s,\infty}^{(a)}(P_{j,i})^\rho\\
&\leq c_1 \sum\limits_{m=0}^M\sum\limits_{j+i=m} \frac
1{2^{j\rho\delta}\varphi^\rho(2^i)}2^{\rho
md(\frac 1t+\frac 1r)}\\
&\leq c_1 \sum\limits_{m=0}^M 2^{\rho md(\frac 1t+\frac
1r)}\sum\limits_{j=0}^m \frac
1{2^{j\rho\delta}\varphi^\rho(2^{m-j})}
\\
&\leq c_2 \sum\limits_{m=0}^M 2^{\rho md(\frac 1t+\frac 1r)}\times
\left\{ \begin{array}{ll}
1/{\varphi^\rho(2^{m})}\quad & {\rm if} ~\alpha_\varphi< \delta,\\
2^{-m\rho\delta}\quad & {\rm if} ~\beta_\varphi> \delta.
\end{array}\right.
\end{array}
\end{equation}
First we consider the case $\alpha_\varphi< \delta$. We choose $r>0$
such that $d(\frac 1t+\frac 1r)> \alpha_\varphi.$ Then
\begin{equation}
L_{s,\infty}^{(a)}(P)^\rho\leq c_3 2^{\rho dM(\frac 1t+\frac
1r)}\sum\limits_{m=0}^M \frac{2^{-\rho d(M-m)(\frac 1t+\frac
1r)}}{\varphi^\rho(2^m)}
\end{equation}
and (\ref{ar}) yield
\begin{equation}\label{idealP}
L_{s,\infty}^{(a)}(P)\leq c\ \frac{2^{dM(\frac 1t+\frac
1r)}}{\varphi(2^M)}.
\end{equation}
Using (\ref{idealsdef}) and (\ref{idealP}), we get
\begin{equation}\label{a_2dMPvar}
a_{2^{dM}}(P, B_1, B_2)\leq c\  2^{dM(\frac 1t-\frac
12)}{\varphi(2^M)}^{-1}.
\end{equation}
Next we consider the case $\beta_\varphi> \delta$. We choose $r>0$
such that $d(\frac 1t+\frac 1r)> \delta.$ Then, we proceed in a
similar way as above and obtain by (\ref{rb1}) that
\begin{equation}\label{a_2dMPdelta}
a_{2^{dM}}(P, B_1, B_2)\leq c\  2^{dM(\frac 1t-\frac 12-\frac\delta
d)}.
\end{equation}
{\tt Substep 3.2.}\quad Estimate of $d_n(Q, B_1, B_2)$. In a similar
way to (\ref{idealPrho}), we obtain by (\ref{idealse2}) that
\begin{equation}\label{idealQrho}
L_{2,\infty}^{(a)}(Q)^\rho\leq c_1 \sum\limits_{m=M+1}^\infty
2^{\rho md/t}\times \left\{ \begin{array}{ll}
1/{\varphi^\rho(2^{m})}\quad & {\rm if} ~\alpha_\varphi< \delta,\\
2^{-m\rho\delta}\quad & {\rm if} ~\beta_\varphi> \delta.
\end{array}\right.
\end{equation}
For the case $\frac dt<\beta_\varphi\le \alpha_\varphi< \delta,$ via
(\ref{rbm}), the formula (\ref{idealQrho}) leads to
\begin{equation}\label{idealQ}
L_{2,\infty}^{(a)}(Q)\leq c\  \frac{2^{dM/t}}{\varphi(2^M)}.
\end{equation}
Using (\ref{idealsdef}) and (\ref{idealQ}), we get
\begin{equation}\label{a_2dMQvar}
a_{2^{dM}}(Q, B_1, B_2)\leq c\  2^{dM(\frac 1t-\frac
12)}{\varphi(2^M)}^{-1}.
\end{equation}
For the case $\frac dt< \delta< \beta_\varphi\le \alpha_\varphi,$ in
the same manner as above, we derive
\begin{equation}\label{a_2dMQdelta}
a_{2^{dM}}(Q, B_1, B_2)\leq c\  2^{dM(\frac 1t-\frac 12-\frac\delta
d)}.
\end{equation}
We take $k=2^{Md}.$ Therefore, under the assumption, $\frac
dt<\beta_\varphi\le\alpha_\varphi <\delta$ \,or \,$\frac dt < \delta
< \beta_\varphi,$ the estimate from above follows from
(\ref{a_2dMPvar}), (\ref{a_2dMPdelta}), (\ref{a_2dMQvar}) and
(\ref{a_2dMQdelta}), by monotonicity of the approximation numbers
and the subadditivity as below,
\begin{equation}
\label{a_kid} a_{2k}({\rm id}, B_1, B_2)\leq a_k(P, B_1, B_2)+
a_k(Q, B_1, B_2).
\end{equation}
{\tt Step 4}.\quad The estimate of $a_k({\rm id}, B_1, B_2)$ from
above in the case of $\mu<\frac dt.$ Inspired by \cite{SV09}, we use
the following division
\begin{equation}
\label{divi3} {\rm
id}=\sum\limits_{m=0}^{M_1}\sum\limits_{j+i=m}P_{j,i}+
\sum\limits_{m=M_1+1}^{M_2}\sum\limits_{j+i=m}P_{j,i}+
\sum\limits_{m=M_2+1}^\infty\sum\limits_{j+i=m}P_{j,i},
\end{equation}
where $M_1,M_2\in\mathbb{N}$\,\,and \,$M_1<M_2$, which will be
determined later on. Using the subadditivity of $s$-numbers, we have
\begin{equation}
\label{dn_divi3} a_{k^\prime}({\rm id}, B_1, B_2)\leq
\triangle_1+\triangle_2+\triangle_3,
\end{equation}
where
\begin{equation*}
\begin{array}{ll}
\triangle_1=\sum\limits_{m=0}^{M_1}
\sum\limits_{j+i=m}a_{k_{j,i}}(P_{j,i}),\quad\quad &
\triangle_2=\sum\limits_{m=M_1+1}^{M_2}
\sum\limits_{j+i=m}a_{k_{j,i}}(P_{j,i}), \\
\triangle_3=\sum\limits_{m=M_2+1}^\infty\sum\limits_{j+i=m}\|P_{j,i}\|,\quad\quad
&k^\prime-1=\sum\limits_{m=0}^{M_2}\sum\limits_{j+i=m}(k_{j,i}-1).
\end{array}
\end{equation*}
Note that for $\triangle_3$, we have $j+i > M_2$ in the sum, and we
take $k_{j,i}=1.$ Now let $k\in\mathbb{N}$ be given. We take
\begin{equation*}
M_1=\left[\frac {\log_2k}d-\frac{\log_2\log_2k}d\right]\quad\quad
{\rm and}\quad \quad M_2=\left[\frac t2\cdot\frac{\log_2k}d\right]
\end{equation*}
where $[a]$ denotes the largest integer smaller than
$a\in\mathbb{R}$ and $\log_2k$ is a dyadic logarithm of $k$. Then
\begin{equation*}
\begin{split}
\triangle_3&=\sum\limits_{m=M_2+1}^\infty\sum\limits_{j+i=m}\|P_{j,i}\|
\leq c_1\sum\limits_{m=M_2+1}^\infty\sum\limits_{j+i=m}\frac
1{2^{j\delta}\varphi(2^i)}
\\ &\leq c_1\sum\limits_{m=M_2+1}^\infty\sum\limits_{j=0}^m\frac
1{2^{j\delta}\varphi(2^{m-j})}
\\ &\leq
\begin{cases}
c_2\sum\limits_{m=M_2+1}^\infty \frac 1{\varphi(2^{m})}\leq c_3
\varphi(2^{M_2})^{-1} \leq c_4 \varphi \big(k^{\frac
t{2d}}\big)^{-1}~&{\rm if}\,~ \delta>\alpha_\varphi,
\\
c_2\sum\limits_{m=M_2+1}^\infty2^{-m\delta}\leq c_3
2^{-M_2\delta}\leq c_4 k^{-\frac{t\delta}{2d}}&{\rm if}\,~
\delta<\beta_\varphi.
\end{cases}
\end{split}
\end{equation*}
Next, we choose proper $k_{j,i}$ for estimating $\triangle_1$ and
$\triangle_2$. If $i+j\leq M_1,$ we take $k_{j,i}=M_{j,i}+1$ such
that $a_{k_{j,i}}(P_{j,i})=0$ and $\triangle_1=0.$ And we obtain
$$\sum\limits_{m=0}^{M_1}\sum\limits_{j+i=m}k_{j,i}\leq
 c_1\sum\limits_{m=0}^{M_1}(m+1)2^{md}\leq c_2M_12^{M_1d}\leq c_3k.$$
Now we give the crucial choice of $k_{j,i}$ for the second sum
$\triangle_2.$  Following Skrzypczak and Vyb\'iral \cite{SV09}, we
take
$$k_{j,i}=[k^{1-\varepsilon}\cdot2^{i\tau_1}\cdot 2^{j\tau_2}],$$
where $\varepsilon,\tau_1, \tau_2$ are positive real numbers such
that
$$\alpha_\varphi+\frac{\tau_1}2<\frac dt,\quad
0<\frac{\tau_1-\tau_2}2<\delta-\alpha_\varphi\quad {\rm and}\quad
\frac{\tau_1 t}{2d}=\varepsilon\quad {\rm if}\,
\delta>\alpha_\varphi,$$
or
$$\ \delta+\frac{\tau_2}2<\frac dt,\quad \  \
0<\frac{\tau_2-\tau_1}2<\beta_\varphi-\delta\quad {\rm and}\quad
\frac{\tau_2 t}{2d}=\varepsilon\quad {\rm if}\,
\delta<\beta_\varphi.$$ Note that the relation, $0<\varepsilon<1,$
holds obviously. Then
$$\sum\limits_{m=M_1+1}^{M_2}
\sum\limits_{j+i=m}k_{j,i}\leq c_1
k^{1-\varepsilon}\sum\limits_{m=M_1+1}^{M_2}2^{m\cdot\max(\tau_1,
\tau_2)} \leq c_2k^{1-\varepsilon}\cdot k^{\frac {t}{2d}\max(\tau_1,
\tau_2)}=c_2 k.$$ And, in terms of (\ref{ar}) and (\ref{idealse2}),
for the case $0<\beta_\varphi\le\alpha_\varphi < \frac dt$ and
$\alpha_\varphi < \delta,$ we have
\begin{equation*}
\begin{split}
\sum\limits_{m=M_1+1}^{M_2}\sum\limits_{j+i=m}a_{k_{j,i}}(P_{j,i})
&\leq c_1\sum\limits_{m=M_1+1}^{M_2}\sum\limits_{j+i=m} \frac
1{2^{j\delta}\varphi(2^i)}
2^{(i+j)d/t}[k^{1-\varepsilon}\cdot2^{i\tau_1}\cdot
2^{j\tau_2}]^{-\frac 12}
\\
&\leq c_2 k^{-\frac 12(1-\varepsilon)}\sum\limits_{m=M_1+1}^{M_2}
2^{md/t}\sum\limits_{j=0}^m \frac{ 2^{-j(\delta+\frac{\tau_2}2)}}{
2^{(m-j)\tau_1/2} \cdot \varphi(2^{m-j}) }
\\
&\leq c_3 k^{-\frac 12(1-\varepsilon)}\sum\limits_{m=M_1+1}^{M_2}
2^{m(d/t-\tau_1/2)}\frac
1{\varphi(2^{m})}\\
&\leq c_4 \varphi(2^{M_2})^{-1} \leq c_5 \varphi \big(k^{\frac
t{2d}}\big)^{-1}.
\end{split}
\end{equation*}
Similarly, for the case $0<\delta < \frac dt$ and $\delta<
\beta_\varphi,$ we derive
\begin{equation*}
\begin{split}
\sum\limits_{m=M_1+1}^{M_2}\sum\limits_{j+i=m}a_{k_{j,i}}(P_{j,i})
&\leq c_1 k^{-\frac 12(1-\varepsilon)}\sum\limits_{m=M_1+1}^{M_2}
2^{md/t}\sum\limits_{j=0}^m\frac{ 2^{-j(\delta+\frac{\tau_2}2)}}{
2^{(m-j)\tau_1/2} \cdot \varphi(2^{m-j}) }\\
&\leq c_2 k^{-\frac 12(1-\varepsilon)}\sum\limits_{m=M_1+1}^{M_2}
2^{m(d/t-\tau_2/2)}2^{-m\delta}\\
&\leq c_3 2^{-M_2\delta} \leq c_3 k^{-\frac{t\delta}{2d}}.
\end{split}
\end{equation*}
Hence the estimate from above is now finished as required.
\\{\tt Step 5}.\quad The estimate of $a_n({\rm id}, B_1, B_2)$ from
below. Consider the following diagram
\begin{equation}
\begin{CD}
\ell_{p_1}^{M_{j,i}} @>S_{j,i}>>
\ell_{q_1}(2^{j\delta}\ell_{p_1}(w))
\\
@VV{\rm id_1}V @VV{\rm id}V\\
\ell_{p_2}^{M_{j,i}} @<T_{j,i}<< \ell_{q_2}(\ell_{p_2})
\end{CD}
\end{equation}
Here,
\begin{equation*}
\begin{array}{rl}
(S_{j,i}\eta)_{u,v}:=&\left\{
\begin{array}{ll}\eta_{\varphi(u,v)}\,
&{\rm if}\quad (u,v)\in I_{j,i}, \\ 0 \, &{\rm otherwise,}
\end{array}\right.\\
(T_{j,i}\lambda)_{\varphi(u,v)}:=&\lambda_{u,v},\quad\quad\quad
(u,v)\in I_{j,i},\end{array}
\end{equation*}
and $\varphi$ denotes a bijection of $I_{j,i}$ onto $\{1, \ldots,
M_{j,i}\},\, j\in\mathbb{N}_0, i\in\mathbb{N}_0.$ Then we observe
\begin{equation*}
\begin{array}{ll}
S_{j,i}\in \mathcal {L}\left(\ell_{p_1}^{M_{j,i}},\,
\ell_{q_1}(2^{j\delta}\ell_{p_1}(w))\right) \quad  &{\rm
and}\quad \|S_{j,i}\|=2^{j\delta}\varphi(2^i),\\
T_{j,i}\in  \mathcal {L}\left(\ell_{q_2}(\ell_{p_2}),\,
\ell_{p_2}^{M_{j,i}}\right) \quad  &{\rm and}\quad \|T_{j,i}\|=1.
\end{array}
\end{equation*}
Hence we obtain
\begin{equation}\label{diagST}
a_k(\,{\rm id}_1)\leq\|S_{j,i}\|~\|T_{j,i}\|\,a_k(\,{\rm id}).
\end{equation}

$(i)$\, Let $\frac dt<\delta<\beta_\varphi$. We denote
$N:=M_{j,0}=|I_{j,0}|\sim 2^{jd},\,\,j\ge\frac 2d.$ Then
$$\|S_{j,0}\|\leq C 2^{j\delta}\quad{\rm and}\quad\|T_{j,0}\|=1.$$
Put $m=\frac N4\sim2^{jd-2}.$ And for sufficiently large $N$ we have
$m\ge N^{2/t}$ since $t>2.$ Consequently,
\begin{equation*}
a_m({\rm id}_1,\,\ell_{p_1}^N,\,\ell_{p_2}^N)\sim
N^{1/t}m^{-1/2}\sim 2^{(jd-2)(\frac 1t-\frac 12)}\sim2^{(jd-2)(\frac
1t-\frac 12)}.
\end{equation*}
Using (\ref{diagST}), we obtain
\begin{equation*}
a_{2^{jd-2}}({\rm id})\ge C_12^{-j\delta}2^{(jd-2)(\frac 1t-\frac
12)}\ge C_22^{(jd-2)(\frac 1t-\frac 12-\frac\delta d)},
\end{equation*}
and, by monotonicity of the approximation numbers, for any
$k\in{\mathbb N},$
\begin{equation}
a_k({\rm id})\ge
 C_3 k^{-(\frac\delta d+\frac 12-\frac 1t)}.
\end{equation}

$(ii)$\, Let $\frac dt<\beta_\varphi\leq\alpha_\varphi<\delta$. We
consider $N:=M_{0,i}=|I_{0,i}|\sim2^{id},\,\,i\ge\frac 2d.$ Then
$$\|S_{0,i}\|\leq C\varphi(2^i)\quad{\rm
and}\quad\|T_{0,i}\|=1.$$Also put $m=\frac N4\sim2^{id-2}.$ Hence we
have similarly, for any $k\in{\mathbb N},$
\begin{equation}
a_k({\rm id})\ge C_1n^{-(\frac 12-\frac
1t)}\varphi\big(k^{1/d}\big)^{-1}.
\end{equation}

$(iii)$\, Let $\delta\leq\frac dt~{\rm and}~\delta<\beta_\varphi.$
We select the same $N,\,S,\,{\rm and}~\,T$ as in point (i) and take
$m=\big[N^{2/t}\big]\leq\frac N4$ for sufficiently large $N.$ Then
$N^{1/t}m^{-1/2}\sim 1.$ Hence by Lemma \ref{an1} and (\ref{diagST})
we obtain
\begin{equation*}
a_m({\rm id})\ge C2^{-j\delta}=C2^{-jd\frac 2t\frac{t\delta}{2d}},
\end{equation*}
and then, for any $k\in{\mathbb N},$
\begin{equation}
a_k({\rm id})\ge C_1k^{-\frac{t\delta}{2d}}.
\end{equation}

$(iv)$\, Let $0<\beta_\varphi\leq\alpha_\varphi<\frac dt~{\rm
and}~\alpha_\varphi<\delta.$ We select the same $N,\,S,\,{\rm
and}~\,T$ as in point (ii) and take $m=\big[N^{ 2/t}\big]$ in the
same way as in point (iii). Then similarly
\begin{equation*}
a_m({\rm id})\ge C\varphi\big(2^i\big)^{-1}=C\varphi\big(2^{id\frac
2t \frac t{2d}}\big)^{-1},
\end{equation*}
and in consequence, for any $k\in{\mathbb N},$
\begin{equation}
a_k({\rm id})\ge C_1 \varphi\big(k^{\frac t{2d}}\big)^{-1}.
\end{equation}
The proof of the proposition is now complete.
\end{proof}

\begin{prop}\label{an5}
Suppose $0<p_1\le 1$\,and $p_2=\infty$. Let $w\in \mathcal{W}_1
\cap\mathcal{W}_2$, and let $\varphi \in \mathcal{V}$ be an
associated function in the sense of Definition \ref{w2}. Then
\begin{equation}
a_{k}\Big({\rm id}, \ell_{q_1}(2^{j\delta}\ell_{p_1}(w)),
\ell_{q_2}(\ell_{p_2})\Big) \sim
\begin{cases}
 k^{- \frac
12}\varphi\big(k^{\frac 1d}\big)^{-1} & {\rm if}\,~
0<\beta_\varphi\le\alpha_\varphi <\delta,
\\
k^{-\frac\delta d - \frac 12} & {\rm if}\,~ 0 < \delta <
\beta_\varphi.
\end{cases}
\end{equation}
\end{prop}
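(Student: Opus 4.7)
The plan is to adapt the proof of Proposition~\ref{an4} to the limiting case $0<p_1\le 1$, $p_2=\infty$, in which $t=\min(p_1^\prime,p_2)=\infty$ and hence $d/t=0$. Under this convention the constraints $d/t<\beta_\varphi$ and $d/t<\delta$ are automatic (since $\beta_\varphi,\delta>0$), collapsing the four cases of Proposition~\ref{an4}$(iii)$ to exactly the two alternatives in the present statement and placing us entirely in the operator ideal regime of Step~3 of the proof of Proposition~\ref{an4}. The only substitution at the finite-dimensional level is to invoke Lemma~\ref{an1inf} in place of Lemmas~\ref{an1} and~\ref{an3}; the block setup of Step~1 (dyadic blocks $P_{j,i}$ with $M_{j,i}\sim 2^{(j+i)d}$ and the inherited estimate $a_k(P_{j,i},B_1,B_2)\le c\,(2^{j\delta}\varphi(2^i))^{-1}a_k({\rm id},\ell_{p_1}^{M_{j,i}},\ell_\infty^{M_{j,i}})$) transfers verbatim.

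For the upper bound I would first extract from Lemma~\ref{an1inf}(i) the ideal estimate
$$L_{s,\infty}^{(a)}({\rm id},\ell_{p_1}^N,\ell_\infty^N)\le C_\lambda N^{1/r},\qquad 1/s=1/r+1/2,$$
valid for any $r>0$ provided $\lambda\in(0,1)$ is chosen with $\lambda(1/r+1/2)\le 1/r$, by splitting the supremum defining $L^{(a)}_{s,\infty}$ at the threshold $k=N^\lambda$. Then I would repeat Substeps~3.1 and~3.2 of the proof of Proposition~\ref{an4} with the $P+Q$ decomposition at level $M$, but with the crucial modification that the $P$- and $Q$-pieces are handled with \emph{different} smoothness indices $s_P,s_Q<2$: select $r_P$ with $d/r_P>\alpha_\varphi$ in the case $\alpha_\varphi<\delta$ (respectively $d/r_P>\delta$ in the case $\delta<\beta_\varphi$), so that Lemma~\ref{rab}(i) with $\eta=0$ dominates $L_{s_P,\infty}^{(a)}(P)$ by its last summand; and select $r_Q$ with $d/r_Q<\beta_\varphi$ (respectively $d/r_Q<\delta$), so that Lemma~\ref{rab}(ii), formula~(\ref{rbm}), dominates $L_{s_Q,\infty}^{(a)}(Q)$ by its first summand. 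Since $1/r_P-1/s_P=1/r_Q-1/s_Q=-1/2$ in all cases, multiplying by $(2^{Md})^{-1/s}$ produces $a_{2^{Md}}(P),a_{2^{Md}}(Q)\le C\,2^{-Md/2}\varphi(2^M)^{-1}$ in the first case and $\le C\,2^{-Md(\delta/d+1/2)}$ in the second, and the subadditivity inequality~(\ref{a_kid}) completes the upper estimate.

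For the matching lower bound I would reuse the commutative diagram from Step~5 of the proof of Proposition~\ref{an4} built from the injections $S_{j,i}$ and surjections $T_{j,i}$, combined with multiplicativity of approximation numbers and the clean lower bound $a_m({\rm id},\ell_{p_1}^{2m},\ell_\infty^{2m})\ge C\,m^{-1/2}$ supplied by Lemma~\ref{an1inf}(ii). Taking $N=M_{0,i}\sim 2^{id}$ and $m=N/2$ yields $a_{2^{id-1}}({\rm id})\ge C\,2^{-id/2}\varphi(2^i)^{-1}$ in the case $\alpha_\varphi<\delta$, while $N=M_{j,0}\sim 2^{jd}$ and $m=N/2$ yields $a_{2^{jd-1}}({\rm id})\ge C\,2^{-jd(1/2+\delta/d)}$ in the case $\delta<\beta_\varphi$. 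Monotonicity of approximation numbers together with the growth estimates of $\varphi$ from Lemma~\ref{ab} then propagate both dyadic lower bounds to all $k\in\mathbb{N}$.

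The delicate point, and the only genuine departure from Proposition~\ref{an4}, is this double parameter selection. Whereas the clean bound $L_{2,\infty}^{(a)}({\rm id},\ell_{p_1}^N,\ell_{p_2}^N)\le CN^{1/t}$ available in the proof of Proposition~\ref{an4} permitted the use of $s=2$ for the $Q$-piece, here Lemma~\ref{an1inf}(i) yields only $L_{2,\infty}^{(a)}({\rm id},\ell_{p_1}^N,\ell_\infty^N)\le C_\lambda N^{\lambda/2}$ with $C_\lambda$ blowing up as $\lambda\to 0^+$, rendering $s=2$ unusable. Forcing $s<2$ on both pieces makes the constraints $d/r_P>\alpha_\varphi$ and $d/r_Q<\beta_\varphi$ (respectively $d/r_P>\delta$ and $d/r_Q<\delta$) mutually incompatible under any single choice of $s$ whenever $\beta_\varphi<\alpha_\varphi$, so two genuinely distinct smoothness indices $s_P\neq s_Q$ must be used.
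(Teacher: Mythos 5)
Your proposal is correct and follows essentially the same route as the paper: the same $P+Q$ block decomposition with the operator-ideal quasi-norms, the same derivation of $L^{(a)}_{s,\infty}(\mathrm{id},\ell_{p_1}^N,\ell_\infty^N)\le C_\lambda N^{1/s-1/2}$ from Lemma \ref{an1inf}(i) under the constraint $\lambda/s\le 1/s-1/2$, the same use of two sub-$2$ indices (one with $d/r_P$ above $\alpha_\varphi$ resp.\ $\delta$ for $P$, one with $d/r_Q$ below $\mu$ for $Q$) combined with Lemma \ref{rab}, and the same lower bound via the $S_{j,i}$, $T_{j,i}$ diagram and Lemma \ref{an1inf}(ii). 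The ``delicate point'' you isolate is exactly the adjustment the paper makes by taking $h=2(1-\lambda)<2$ for the $Q$-piece.
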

\begin{proof}
We only sketch the proof since we can proceed as in the last proof.

\emph{Step 1} (Upper Estimates).  $0<p_1\le1$ and $p_2=\infty$ imply
$t=\infty$. We select $0<\lambda<1$ such that
$\frac\lambda{2(1-\lambda)}<\frac\mu d$, with
$\mu=\min(\beta_\varphi,\delta)$. The inequality $\lambda\cdot \frac
1s\le\frac 1s-\frac 12$ holds if and only if $\frac 1s\ge\frac
1{2(1-\lambda)}$, where $0<\lambda<1$. Then, we find by
(\ref{an1infupp}) that for any $N\in\mathbb{N}$
\begin{equation}\label{idealseh}
L_{h,\infty}^{(a)}({\rm id}, \ell_{p_1}^N, \ell_{p_2}^N)\leq C\
N^{\frac\lambda{2(1-\lambda)}},\ \ \ {\rm if}\ \ \frac 1h=\frac
1{2(1-\lambda)},
\end{equation}
\begin{equation}\label{idealsh}
L_{s,\infty}^{(a)}({\rm id}, \ell_{p_1}^N, \ell_{p_2}^N)\leq C\
N^{(\frac 1s-\frac 12)},\ \ \ {\rm if}\ \ \frac 1s>\frac
1{2(1-\lambda)}.
\end{equation}
As to the precise definitions of $P$ and $Q$, we refer to the
counterpart of the last proof again. For the estimation of $a_n(P)$,
we choose $s$ such that $\frac 1s>\frac 1{2(1-\lambda)}$\ and\
$d(\frac 1s-\frac 12)>\mu$, and proceed by using (\ref{idealsh}).
For the estimation of $a_n(Q)$, we choose $s=h=2(1-\lambda)$, and
use (\ref{idealseh}) instead. Note that $\lambda\cdot \frac 1h=\frac
1h-\frac 12<\frac\mu d$.

\emph{Step 2} (Lower Estimates). We only need to consider two cases,
$0<\delta<\beta_\varphi$\
 or $0<\beta_\varphi\le\alpha_\varphi<\delta$. And we choose $\ell=\big[\frac N2\big]$ where $N$ is
taken in the same way as in point (a) or (b) of Step 5 of
Proposition \ref{an4}, respectively, and use (\ref{an1inflow}).
\end{proof}

\subsection{Proofs of main assertions}\label{akg}
First, let us point out that the last two results, Propositions
\ref{an4} and \ref{an5}, are independent of the fine indices\ $q_1$
and\ $q_2$ of the Besov spaces. Afterwards, in the proof for the
Banach space case ($1\le p_1,\,p_2,\,q_1,\,q_2\le \infty$), the
restrictions $q_1,q_2\ge1$ could be lifted. So in the following
proofs, we can restrict the
attention to\ $p_1$ and\ $p_2$. \vspace{0.4cm} \\
\emph{Proof of Theorem \ref{an}.}\ \ Based on Proposition
~\ref{Besov_des}, we transfer Proposition \ref{an4} for weighted
sequence spaces to weighted Besov spaces, and extend the result
trivially to the quasi-Banach space case ($p_1<1$ and
$2<p_2<\infty$) in terms of point (iii) of Lemma \ref{an1}. Then the
transformation of Proposition \ref{an5} finishes the proof of point
(iii) in Theorem \ref{an}. Afterwards, the other two points, (i) and
(ii), follow from the proof of Proposition 13 and Proposition 15
respectively in \cite{Sk05}, in view of Lemma \ref{an1} and Lemma
\ref{an2}, respectively. \qed
\\
\quad \\
\emph{Proof of Theorem \ref{kn}.}\ \ The estimate in the Banach
space case ($p_1, p_2\ge 1$) follows literally from the counterpart
of Theorem \ref{an}, with a few replacements as adopted in the
proofs of Propositions 3.7 and 3.8 in \cite{ZF10}. For the
quasi-Banach space case ($p_1<1$\ or\ $p_2< 1$), though there are
many differences between both widths of the Euclidean ball, we can
prove it in a similar way as in the proof of Theorem 2.5 in
\cite{ZF10} (see also \cite{ZF11}), in terms of Lemma \ref{rab}.
Please note that the same formula from Lemma \ref{an2} is not true
for Kolmogorov numbers if $p_2< 1$.\qed
\\
\quad \\
\emph{Proof of Theorem \ref{gn}.}\ \ The estimate in the Banach
space case ($p_1, p_2\ge 1$) follows trivially from Theorem \ref{kn}
due to the duality between Kolmogorov and Gelfand numbers of the
Euclidean ball, cf. (\ref{dualc*d}) and (\ref{duald*c}). For the
quasi-Banach space case ($p_1<1$\ or\ $p_2< 1$), there are still
many differences between both widths of the Euclidean ball. And we
can finish it exactly as the proof of Theorem 2.8 in \cite{ZF10}
(see also \cite{ZF11}), in terms of Lemma \ref{rab}. Of course, the
same formula from Lemma \ref{an2} is also valid for Gelfand numbers
if $p_1<1$\ or\ $p_2< 1$, consult literally \cite{Pie78}, Section
11.11.4. \qed

\section{Widths for weights of typical types}
In this section we recall polynomial weights and mainly investigate
polynomial weights with small perturbation. For details like basic
properties and examples for both types, one can consult
\cite{KLSS06} and references therein.
\subsection{Polynomial weights}

Let $\alpha>0.$ We put
\begin{equation}\label{w_a}
w_\alpha(x):=(1+|x|^2)^{\alpha/2},\quad x\in \mathbb{R}^d.
\end{equation}
Then $w_\alpha\in \mathcal{W}_1 \cap\mathcal{W}_2$. The associated
function $\varphi$ is denoted by $\varphi(t)=|t|^\alpha,$ and we
have $\alpha_\varphi=\beta_\varphi=\alpha.$

Now we recall the necessary and sufficient condition for compactness
of the embeddings under consideration, which was proved in
\cite{HT94}, cf. also \cite{ET96, KLSS06}.
\begin{lemma}
Let $w_\alpha$ be as in (\ref{w_a}) for some $\alpha>0.$ The
embedding
\begin{equation}\label{BBa}
B_{p_1,q_1}^{s_1}(\mathbb{R}^d, w_\alpha)\hookrightarrow
B_{p_2,q_2}^{s_2}(\mathbb{R}^d)
\end{equation}
is compact if and only if $ \min(\alpha, \delta)>  d/{p^*}.$
\end{lemma}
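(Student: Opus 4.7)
The plan is to derive the lemma as a direct corollary of the general compactness criterion (the Proposition attributed to K\"uhn et al.\ stated earlier in the excerpt), by specializing to the polynomial weight $w_\alpha$ and then reducing each of the conditions there to the single inequality $\min(\alpha,\delta)>d/p^{\ast}$.

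First I would verify the hypotheses of the general proposition. For $w_\alpha(x)=(1+|x|^2)^{\alpha/2}$ with $\alpha>0$, the function $w_\alpha$ is smooth and positive, and Leibniz/chain-rule estimates on $(1+|x|^2)^{\alpha/2}$ give $|D^\beta w_\alpha(x)|\leq c_\beta\, w_\alpha(x)$ for every multi-index $\beta$, so $w_\alpha\in\mathcal{W}_1$. Moreover, $w_\alpha(x)\sim |x|^\alpha$ for $|x|>1$ and is bounded on $|x|\leq 1$, so with the choice $\varphi(t)=t^\alpha$ on $[1,\infty)$ one has $w_\alpha\in\mathcal{W}_2$ with $\varphi$ associated to $w_\alpha$ in the sense of Definition \ref{w2}. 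A direct computation from the definitions of $\underline{\varphi},\bar{\varphi}$ gives $\underline{\varphi}(t)=\bar{\varphi}(t)=t^\alpha$, and hence $\alpha_\varphi=\beta_\varphi=\alpha$.

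Next I would split into the two cases of the Proposition. If $p^{\ast}=\infty$ (i.e., $p_2\leq p_1$), the criterion requires $\delta>0$ and $\lim_{t\to\infty}\varphi(t)=\infty$. Since $\alpha>0$ is part of the standing hypothesis, $t^\alpha\to\infty$ holds automatically, so the criterion collapses to $\delta>0$, which is exactly $\min(\alpha,\delta)>0=d/p^{\ast}$. If $0<p^{\ast}<\infty$, the criterion requires $\delta>d/p^{\ast}$ together with
\begin{equation*}
\int_1^\infty \varphi(t)^{-p^{\ast}}\,t^{d}\,\frac{dt}{t}=\int_1^\infty t^{-\alpha p^{\ast}+d-1}\,dt<\infty,
\end{equation*}
and this integral is finite precisely when $-\alpha p^{\ast}+d<0$, i.e.\ when $\alpha>d/p^{\ast}$. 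The conjunction $\delta>d/p^{\ast}$ and $\alpha>d/p^{\ast}$ is just $\min(\alpha,\delta)>d/p^{\ast}$.

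The proof is essentially a specialization; the only work is verifying that $w_\alpha$ fits the two weight classes and evaluating the integral. Both steps are routine, so I do not anticipate a real obstacle. A short remark could be added that this reproves the classical Haroske–Triebel criterion cited above.
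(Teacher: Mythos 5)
Your proposal is correct, and it is worth noting that the paper itself offers no proof of this lemma at all: it simply attributes the result to Haroske and Triebel \cite{HT94} (cf.\ also \cite{ET96, KLSS06}). Your route --- specializing the general compactness criterion for $w\in\mathcal{W}_1\cap\mathcal{W}_2$ to $\varphi(t)=t^\alpha$ --- is a legitimate and self-contained alternative within the framework the paper has already set up, and it is essentially how one recovers the classical polynomial-weight criterion from the general one of K\"uhn--Leopold--Sickel--Skrzypczak. The computations are all right: $\underline{\varphi}(t)=\bar{\varphi}(t)=t^\alpha$ gives $\alpha_\varphi=\beta_\varphi=\alpha$; in the case $p^*=\infty$ the condition $\lim_{t\to\infty}t^\alpha=\infty$ is automatic from $\alpha>0$, so the criterion reduces to $\delta>0=d/p^*$; and in the case $0<p^*<\infty$ the integral $\int_1^\infty t^{-\alpha p^*+d-1}\,dt$ converges precisely when $\alpha>d/p^*$, so the conjunction is $\min(\alpha,\delta)>d/p^*$. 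Two small points of care: first, your parenthetical identification of the case $p^*=\infty$ is reversed --- since $1/p^*=(1/p_2-1/p_1)_+$, one has $p^*=\infty$ exactly when $p_1\le p_2$, not $p_2\le p_1$ (this does not affect the argument, which only uses $p^*=\infty$ itself); second, when verifying $w_\alpha\in\mathcal{W}_1$ you should also record condition (ii) of the definition, i.e.\ $w_\alpha(x)\le c\,w_\alpha(y)(1+|x-y|)^\alpha$, which follows from Peetre's inequality $(1+|x|^2)^{1/2}\le\sqrt{2}\,(1+|y|^2)^{1/2}(1+|x-y|)$ raised to the power $\alpha$. The trade-off between the two approaches is clear: the citation to \cite{HT94} rests on a direct (and historically earlier) argument for polynomial weights, while your derivation buys uniformity --- the same two-line computation handles this lemma and the perturbed weights $w_{\alpha,\psi}$ of the next subsection, where the borderline case $\alpha=d/p^*$ genuinely depends on the integrability of $\psi^{-p^*}$.
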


Applying Theorems \ref{an}, \ref{kn} and \ref{gn} leads to the
corresponding results on approximation numbers, Gelfand and
Kolmogorov numbers
 in weighted Besov spaces with polynomial weights, respectively,
which were already proved in \cite{Sk05,ZF10,ZF11}.

\subsection{Small perturbations of polynomial weights}

Let $\psi : [0,\infty) \rightarrow (0,\infty)$ be a positive and
continuous function such that
$$\psi(t)=\exp\Big\{\int_1^t\varepsilon(u)\frac{du}{u}\Big\},\quad t\in [1,\infty),$$
for some bounded and measurable function $\varepsilon$ satisfying
$\lim_{u\rightarrow}\varepsilon(u)=0.$ Then we say $\psi$ is a
(normalized) slowly varying function. And for $\alpha>0$ the
function
\begin{equation}\label{w_ap}
w_{\alpha,\psi}(x):=(1+|x|^2)^{\alpha/2}\psi(|x|),\quad x\in
\mathbb{R}^d,
\end{equation}
belongs to $\mathcal W_2$. The associated function $\varphi$ may be
represented as $\varphi(t)=(1+t^2)^{\alpha/2}\psi(t),$ and we have
$\alpha_\varphi=\beta_\varphi=\alpha$. The following proposition is
taken from \cite{KLSS06}.
\begin{prop}
Let $w_{\alpha,\psi}$ be as in (\ref{w_ap}) for some $\alpha>0.$ The
embedding
\begin{equation}\label{BBap}
B_{p_1,q_1}^{s_1}(\mathbb{R}^d, w_{\alpha,\psi})\hookrightarrow
B_{p_2,q_2}^{s_2}(\mathbb{R}^d)
\end{equation}
 is compact if and only if one of the
following conditions is satisfied:\vspace{-0.2cm}
\begin{enumerate}
\item[$(i)$]\ $ \min(\alpha, \delta)>  d/{p^*};$
\vspace{-0.2cm}

\item[$(ii)$]\ $\delta>\alpha = d / {p^*}$ ~ and ~ $\int_1^\infty
\psi(t)^{-p^*}\frac{dt}t<\infty.$
\end{enumerate}
\end{prop}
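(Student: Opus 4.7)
\medskip

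\noindent\textbf{Proof plan.} The strategy is to reduce the statement to the general compactness criterion already recorded in the excerpt (the proposition preceding Theorem~\ref{an}), by identifying the associated function $\varphi$ for the perturbed polynomial weight and checking when the two conditions there are satisfied. First I would verify that $w_{\alpha,\psi}\in\mathcal{W}_1\cap\mathcal{W}_2$. Smoothness and the estimate \eqref{w1} follow from $(1+|x|^2)^{\alpha/2}$ being smooth and the fact that a slowly varying $\psi$ grows/decays slower than any polynomial; membership in $\mathcal{W}_2$ with associated function
\[
\varphi(t)=(1+t^2)^{\alpha/2}\psi(t), \qquad t\ge 1,
\]
is then immediate, and a direct computation (using $\varepsilon(u)\to 0$) gives $\alpha_\varphi=\beta_\varphi=\alpha$.

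Next I would split into the two cases of the general criterion. When $p^*=\infty$, the requirement reduces to $\delta>0$ and $\varphi(t)\to\infty$; because $\alpha>0$ and $t^{-\varepsilon}\psi(t)\to 0$ for every $\varepsilon>0$, the latter holds automatically, so compactness is equivalent to $\delta>0$, which is included in (i) with $d/p^*=0$. When $0<p^*<\infty$ the criterion becomes $\delta>d/p^*$ together with
\[
\int_1^\infty\varphi(t)^{-p^*}t^d\,\frac{dt}{t}
=\int_1^\infty t^{d-\alpha p^*-1}\psi(t)^{-p^*}\,dt<\infty.
\]
The analysis of this integral is the core of the argument.

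I would distinguish the three subcases $\alpha>d/p^*$, $\alpha<d/p^*$, $\alpha=d/p^*$. By the Potter-type bounds satisfied by slowly varying functions, for every $\varepsilon>0$ there are constants such that $c_\varepsilon^{-1}t^{-\varepsilon}\le\psi(t)\le c_\varepsilon t^{\varepsilon}$ for $t\ge 1$. Choosing $\varepsilon$ small compared with $|\alpha p^*-d|$ reduces the convergence question to that of $\int_1^\infty t^{d-\alpha p^*-1\pm \varepsilon p^*}dt$, which converges when $\alpha>d/p^*$ and diverges when $\alpha<d/p^*$. Hence $\alpha>d/p^*$ (combined with $\delta>d/p^*$) gives condition (i) of the statement; $\alpha<d/p^*$ rules out compactness. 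In the critical subcase $\alpha=d/p^*$ the polynomial factor is exactly $t^{-1}$, and the integral collapses to $\int_1^\infty\psi(t)^{-p^*}\,dt/t$, which is precisely the integral appearing in condition (ii); together with the remaining requirement $\delta>d/p^*=\alpha$, this is condition (ii).

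The plan is therefore essentially bookkeeping once the associated $\varphi$ is identified; the only delicate point is the critical subcase $\alpha=d/p^*$, where one must be careful not to absorb the slowly varying factor into polynomial bounds, and instead keep it explicit to recover the sharp integral condition. The necessity of the two alternatives and the sufficiency both follow from the general criterion once the three subcases are settled.
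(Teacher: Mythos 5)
Your reduction to the general compactness criterion quoted just before Theorem~\ref{an} is exactly the right route, and in fact the paper offers no proof of this proposition at all: it is imported verbatim from K\"uhn--Leopold--Sickel--Skrzypczak, where it is derived precisely as you propose. Your case analysis of the integral $\int_1^\infty t^{d-\alpha p^*-1}\psi(t)^{-p^*}\,dt$ is correct and complete: the Potter bounds $c_\varepsilon^{-1}t^{-\varepsilon}\le\psi(t)\le c_\varepsilon t^{\varepsilon}$ do follow from the representation $\psi(t)=\exp\bigl(\int_1^t\varepsilon(u)\,du/u\bigr)$ with $\varepsilon(u)\to 0$, they settle the subcases $\alpha\gtrless d/p^*$, and you correctly resist absorbing $\psi$ into a polynomial bound in the critical subcase $\alpha=d/p^*$, where the integral collapses to the sharp condition $\int_1^\infty\psi(t)^{-p^*}\,dt/t<\infty$ of part (ii). The treatment of $p^*=\infty$ (where $d/p^*=0$ and $\varphi(t)\to\infty$ is automatic since $\alpha>0$) is also right.

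The one step I would not let pass as written is your claim that $w_{\alpha,\psi}\in\mathcal{W}_1$ because ``a slowly varying $\psi$ grows/decays slower than any polynomial.'' Sub-polynomial growth has no bearing on condition (i) of the definition of $\mathcal{W}_1$ (infinite differentiability) or on the derivative bounds in condition (iii): $\psi$ is only assumed continuous, with $\log\psi$ an integral of a bounded \emph{measurable} function, so $\psi$ is merely locally Lipschitz on $[1,\infty)$ and need not be $C^\infty$. The paper itself only asserts $w_{\alpha,\psi}\in\mathcal{W}_2$. The standard repair is to replace $\psi$ by a smooth weight equivalent to it (e.g.\ by mollification, which preserves the two-sided bounds defining membership in $\mathcal{W}_2$ and the slowly varying behaviour), note that equivalent admissible weights induce the same Besov spaces with equivalent quasi-norms, and then apply the general criterion to the smoothed weight. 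With that sentence fixed, your argument is complete.
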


This time in terms of Theorem \ref{an}, Theorem \ref{kn} and Theorem
\ref{gn}, we obtain the following assertions. We wish to mention
that the proofs  of the following three theorems for the special
case, $0<\alpha=\frac dp\ (<\delta)$, will be given in a sketch
later on.

\begin{theorem}\label{anap}
Let $\delta> \alpha >0$ and $\frac 1{\tilde{p}}=\frac\alpha d+\frac
1{p_1}.$ Denote by $a_k$ the $k$th approximation number of the
embedding (\ref{BBap}). \vspace{-0.2cm}
\begin{enumerate}
\item[$(i)$] Suppose in addition that $0< p_1 \le p_2 \le
2$\,\,or\,\,$2\le p_1\le p_2\le \infty.$ Then
$$a_k \sim k^{-\frac \alpha d}\psi\big(k^{\frac 1d}\big)^{-1}.$$
\vspace{-0.8cm}

\item[$(ii)$] Suppose that in addition to the general hypotheses,
$\tilde{p}< p_2 < p_1 \le \infty.$ Then
$$
a_k \sim
\begin{cases}
 k^{-\frac \alpha d - (\frac 1{p_1}-\frac
1{p_2})}\psi\big(k^{\frac 1d}\big)^{-1} ~ &{\rm if} ~ \alpha>\frac
dp,
\\
\big(\int_{k^{1/d}}^\infty \psi(t)^{-p}\frac{dt}t\big)^{1/p}
 ~ &{\rm if} ~\alpha =\frac dp.
\end{cases}
$$
\vspace{-0.6cm}

\item[$(iii)$] Suppose that in addition to the general hypotheses, $0< p_1
< 2 < p_2\le \infty$.  Then
$$
a_k \sim
\begin{cases}
 k^{-\frac \alpha d - (\frac 1{2}-\frac
1{t})}\psi\big(k^{\frac 1d}\big)^{-1} ~ &{\rm if} ~ \alpha>\frac dt,
\\
 k^{-\frac{\alpha t}{2d}}\psi\big(k^{\frac
t{2d}}\big)^{-1} ~ &{\rm if} ~ \alpha < \frac dt.
\end{cases}
$$where
$t=\min(p_1^\prime,p_2)$. \vspace{-0.2cm}
\end{enumerate}

\end{theorem}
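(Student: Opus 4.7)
The key observation is that for the weight $w_{\alpha,\psi}$, the associated function $\varphi(t)=(1+t^2)^{\alpha/2}\psi(t)$ satisfies $\alpha_\varphi=\beta_\varphi=\alpha$, and since $\psi$ is slowly varying, $\varphi(k^{1/d})\sim k^{\alpha/d}\psi(k^{1/d})$. Thus, outside of the limiting situation $\alpha=d/p$ in part (ii), every assertion in Theorem \ref{anap} is a direct transcription of the appropriate case of Theorem \ref{an}: part (i) corresponds to the branch $0<\beta_\varphi\le\alpha_\varphi<\delta$ in Theorem \ref{an}(i); part (ii) with $\alpha>d/p$ corresponds to $d/p<\beta_\varphi=\alpha_\varphi<\delta$ in Theorem \ref{an}(ii); and in part (iii), the assumption $\delta>\alpha=\alpha_\varphi=\beta_\varphi$ automatically eliminates the two branches of Theorem \ref{an}(iii) involving $\delta<\beta_\varphi$, leaving exactly the two cases $\alpha>d/t$ and $\alpha<d/t$ listed. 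The factor $k^{\alpha/d}$ combines with the $\varphi(k^{1/d})^{-1}$ or $\varphi(k^{t/(2d)})^{-1}$ terms to give the stated forms.

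The only genuinely new case is the limiting $\alpha=d/p$ in part (ii), equivalently $\tilde p=p_2$. The plan is to rerun the operator-ideal splitting of Proposition \ref{an4}, but with Lemma \ref{an2} replacing Lemma \ref{an1}, since here $p_2<p_1$ and one has the exact formula $a_k(\mathrm{id},\ell_{p_1}^N,\ell_{p_2}^N)=(N-k+1)^{1/p}$. Decomposing $\mathrm{id}=P+Q$ as in (\ref{PQ}) at level $M$ with $k\sim 2^{Md}$, the contribution of the fully resolved inner part $P$ is controlled by choosing $k_{j,i}=M_{j,i}+1$ for $i+j\le M$, while the tail $Q$ contributes via $\|P_{j,i}\|$ for $j+i>M$. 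At the borderline $\alpha=d/p$, the polynomial factor $2^{(j+i)d/p}$ coming from $M_{j,i}^{1/p}$ exactly cancels the polynomial part of $\varphi(2^i)^{-1}$, so that the tail bound reduces to a $\psi$-only series which, by slowly varying function theory, is equivalent to the integral $\int_{k^{1/d}}^\infty\psi(t)^{-p}\,dt/t$. The matching lower bound would be obtained as in Step 5 of Proposition \ref{an4} by testing a single shell $I_{0,i}$ with $N\sim 2^{id}$ and choosing $k$ proportional to $N$, exploiting Lemma \ref{an2} together with $\varphi(2^i)\sim 2^{i\alpha}\psi(2^i)$ to recover the same tail integral from below.

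The hard part will be that Lemma \ref{rab} is formulated with strict inequalities between $\beta_\varphi,\alpha_\varphi$ and the relevant thresholds, and these strict inequalities degenerate precisely when $\alpha=d/p$. A direct computation exploiting the slowly varying nature of $\psi$ must therefore replace the geometric-sum estimates of that lemma, and the exponent $p$ inside the integral has to emerge from a careful aggregation of the shell contributions through the inner $\ell_{p_2}$ structure of the target space rather than from a generic subadditivity argument. Once this limiting case is settled, assembling it with the direct transcriptions from Theorem \ref{an} completes the proof.
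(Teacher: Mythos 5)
Your reduction of the non-limiting cases to Theorem \ref{an} is exactly what the paper does and is fine. The genuine gap is in your treatment of the limiting case $\alpha=d/p$ in part (ii), where both halves of your plan break down. For the upper bound: after splitting ${\rm id}$ into the shells $P_{j,i}$ of (\ref{PQ}) one gets $\|P_{j,i}\|\lesssim 2^{j(d/p-\delta)}\psi(2^i)^{-1}$, and aggregating the tail $j+i>M$ by the subadditivity {\rm\bf(PS2)} yields $\bigl(\sum_{m>M}\psi(2^m)^{-\rho}\bigr)^{1/\rho}$ with $\rho=\min(1,p_2,q_2)$. Since $1/p=1/p_2-1/p_1<1/p_2$ one always has $\rho\le p_2<p$, so this is strictly weaker than the target $\bigl(\sum_{m>M}\psi(2^m)^{-p}\bigr)^{1/p}\sim\bigl(\int_{k^{1/d}}^\infty\psi(t)^{-p}\,dt/t\bigr)^{1/p}$; for $\psi(t)=(\log t)^b$ with $b\rho\le 1<bp$ the subadditive sum even diverges although the embedding is compact. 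Your suggestion that the exponent $p$ should emerge from the ``inner $\ell_{p_2}$ structure'' does not repair this: aggregating in $\ell_{p_2}$ produces the exponent $p_2$, still smaller than $p$. For the lower bound: testing a single shell $I_{0,i}$ with $N\sim 2^{id}$ and $k\sim N$ gives, via Lemma \ref{an2} and the factorization of Step 5 of Proposition \ref{an4}, only $a_k\gtrsim\varphi(2^i)^{-1}N^{1/p}\sim\psi(k^{1/d})^{-1}$, which is in general strictly smaller than the tail integral (for $\psi(t)=(\log t)^b$, $bp>1$, one has $(\log k)^{-b}$ versus $(\log k)^{1/p-b}$). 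No single shell can detect the accumulated tail, so this does not ``recover the same tail integral from below.''

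The paper closes the limiting case by a different mechanism, which is exactly where the exponent $p$ comes from: for fixed level $j$ the embedding is a diagonal operator from $\ell_{p_1}$ to $\ell_{p_2}$, and Lemma \ref{diagoper_ag} gives the exact two-sided identity $a_k(D_\sigma)=\bigl(\sum_{n\ge k}\sigma_n^p\bigr)^{1/p}$ for the non-increasing rearrangement of the diagonal; applied to $\sigma_n^*\sim n^{-\alpha/d}\psi(n^{1/d})^{-1}$ with $\alpha=d/p$ this produces the tail integral simultaneously from above and from below (this is the rearrangement technique of Theorem 7 in \cite{Ku08} that the paper invokes). If you insist on keeping the block decomposition of Proposition \ref{an4}, you would in effect have to reprove this diagonal-operator formula inside it; as written, your argument establishes neither the correct upper nor the correct lower bound in the case $\alpha=d/p$.
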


\begin{theorem}\label{knap}
Let $\delta> \alpha >0,\,\theta =
\frac{1/{p_1}-1/{p_2}}{1/2-1/{p_2}}$ and $\frac
1{\tilde{p}}=\frac\alpha d+\frac 1{p_1}.$ Denote by $d_k$ the $k$th
approximation number of the embedding (\ref{BBap}). \vspace{-0.2cm}
\begin{enumerate}
\item[$(i)$] Suppose in addition that $0< p_1 \le p_2 \le
2$\,\,or\,\,$2< p_1= p_2\le \infty.$ Then \vspace{-0.2cm}
$$d_k \sim k^{-\frac \alpha d}\psi\big(k^{\frac 1d}\big)^{-1}.$$
\vspace{-0.8cm}

\item[$(ii)$] Suppose that in addition to the general hypotheses,
$\tilde{p} < p_2 < p_1 \le \infty.$ Then \vspace{-0.2cm}
$$
d_k \sim
\begin{cases}
k^{-\frac \alpha d - (\frac 1{p_1}-\frac 1{p_2})}\psi\big(k^{\frac
1d}\big)^{-1} ~ &{\rm if} ~ \alpha>\frac dp,
\\
\big(\int_{k^{1/d}}^\infty \psi(t)^{-p}\frac{dt}t\big)^{1/p}
 ~ &{\rm if}~ \alpha=\frac dp.
\end{cases}
$$
\vspace{-0.6cm}

\item[$(iii)$] Suppose that in addition to the general hypotheses, $0< p_1
< 2 < p_2\le \infty.$ Then \vspace{-0.2cm}
$$d_k \sim
\begin{cases}
k^{-\frac\alpha d+\frac 1{p_2} - \frac 12}\psi\big(k^{\frac
1d}\big)^{-1} ~ &{\rm if} ~ \alpha>\frac d{p_2},
\\
 k^{-\frac {p_2\alpha}{2d}}\psi\big(k^{\frac
{p_2}{2d}}\big)^{-1} ~ &{\rm if} ~ \alpha < \frac d{p_2}.
\end{cases}
$$
\vspace{-0.6cm}

\item[$(iv)$] Suppose that in addition to the general hypotheses, $2\le p_1
< p_2\le \infty.$ Then \vspace{-0.2cm}
$$d_k \sim
\begin{cases}
k^{-\frac\alpha d+\frac 1{p_2} - \frac 1{p_1}}\psi\big(k^{\frac
1d}\big)^{-1} ~ &{\rm if} ~ \alpha>\frac d{p_2}\theta,
\\
 k^{-\frac {p_2\alpha}{2d}}\psi\big(k^{\frac
{p_2}{2d}}\big)^{-1} ~ &{\rm if} ~ \alpha < \frac d{p_2}\theta.
\end{cases}
$$
\end{enumerate}

\end{theorem}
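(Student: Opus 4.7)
My plan is to derive Theorem \ref{knap} from the general Theorem \ref{kn} by specializing the associated function, and to cover the critical exponent $\alpha = d/p$ in item (ii) by a separate refinement at the sequence-space level.

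As recorded just before the statement, the weight $w_{\alpha,\psi}$ has associated function $\varphi(t) = (1+t^2)^{\alpha/2}\psi(t)$ with $\alpha_\varphi = \beta_\varphi = \alpha$, and $\varphi(t) \sim t^\alpha\psi(t)$ for $t \ge 1$, so $\varphi(k^{1/d})^{-1} \sim k^{-\alpha/d}\psi(k^{1/d})^{-1}$. Under the standing hypothesis $\delta > \alpha$, the two branches appearing in each item of Theorem \ref{kn} --- one with $\alpha_\varphi < \delta$ and one with $\delta < \beta_\varphi$ --- reduce to the single condition $\alpha < \delta$, which is automatic. Substituting $\varphi(k^{1/d})^{-1}$ and rewriting the threshold conditions $d/p < \alpha_\varphi$, $d/p_2 < \alpha_\varphi$, $(d/p_2)\theta < \alpha_\varphi$ as $\alpha > d/p$, $\alpha > d/p_2$, $\alpha > (d/p_2)\theta$ respectively gives items (i), (iii), (iv) of Theorem \ref{knap} along with the non-limit subcase $\alpha > d/p$ in item (ii). The low-$\alpha$ subcases of (iii) and (iv) are read off the last two branches of Theorem \ref{kn}(iii), (iv) in exactly the same way, producing $k^{-p_2\alpha/(2d)}\psi(k^{p_2/(2d)})^{-1}$.

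The new piece, which is also the main obstacle, is the critical case $\alpha = d/p$ in item (ii). Here Theorem \ref{kn}(ii) does not apply, since both conditions $d/p < \alpha_\varphi$ and $d/p < \delta$ are strict. To treat it I would revisit the sequence-space scheme of Proposition \ref{an4}, adapted to Kolmogorov numbers, with $\varphi(t) = t^\alpha\psi(t)$ at the critical index. The key modification is that at this exponent the geometric bounds furnished by Lemma \ref{rab} fail: the tail sums $\sum_{m \ge M}\psi(2^m)^{-p}$ must instead be compared, via the slow variation of $\psi$, to the integral $\int_{2^M}^\infty \psi(t)^{-p}\,dt/t$. Choosing $M \sim (\log_2 k)/d$ then produces the announced rate $\bigl(\int_{k^{1/d}}^\infty \psi(t)^{-p}\,dt/t\bigr)^{1/p}$. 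The matching lower bound follows from the commutative-diagram construction in Step 5 of the proof of Proposition \ref{an4}, summed over dyadic annuli whose contributions aggregate to the same integral. All the delicate analysis concentrates in this critical-exponent case, which is precisely why the authors announce it as a separate sketch; the remaining portions are essentially bookkeeping over Theorem \ref{kn}.
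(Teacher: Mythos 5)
Your reduction of the non-critical cases to Theorem \ref{kn} is exactly what the paper does and is fine; the problem is the critical case $\alpha=d/p$ in item (ii), where your proposed adaptation of the block scheme of Proposition \ref{an4} does not deliver the claimed rate. In this regime $p_2<p_1$, and the subadditivity {\rm\bf(PS2)} aggregates the contributions of the dyadic blocks $P_{j,i}$ in an $\ell_1$ (or $\ell_\rho$, $\rho\le1$) fashion: since $\|P_{j,i}\|\preceq 2^{-j(\delta-d/p)}\psi(2^i)^{-1}$, the tail estimate you would obtain is of the form $\sum_{m\ge M}\psi(2^m)^{-1}$, not the $\ell_p$-aggregated quantity $\bigl(\sum_{m\ge M}\psi(2^m)^{-p}\bigr)^{1/p}\sim\bigl(\int_{2^M}^\infty\psi(t)^{-p}\,dt/t\bigr)^{1/p}$ asserted by the theorem. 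Since $p=(1/p_2-1/p_1)^{-1}$ may well exceed $1$, these differ essentially: for $\psi(t)=(\log(e+t))^{\beta}$ with $1/p<\beta\le1$ the integral converges while the blockwise sum diverges, so your upper bound would not even establish the correct order. The lower bound is likewise not obtained by ``summing the dyadic annuli'' in Step 5 of Proposition \ref{an4}: for a fixed $k$ each commutative diagram produces the contribution of a single block, and that argument has no mechanism for accumulating different annuli into an $\ell_p$ sum.

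The paper resolves both difficulties with K\"uhn's rearrangement technique (the proof of Theorem 7 in \cite{Ku08}): the whole weight sequence is rearranged into one non-increasing sequence $(\sigma_n)$ and the embedding is compared with the diagonal operator $D_\sigma:\ell_{p_1}\rightarrow\ell_{p_2}$, for which Lemma \ref{diagoper_ag} gives the exact value $\bigl(\sum_{n\ge k}\sigma_n^p\bigr)^{1/p}$; this is where the $\ell_p$ tail sum, hence the integral, enters in both directions. Moreover, for Kolmogorov numbers that exact diagonal formula fails (as the paper stresses), so even after the reduction to $D_\sigma$ one still needs the entropy-number detour: Lemma \ref{relation_ed} combined with the known entropy asymptotics of diagonal operators from \cite{Ku08} and \cite{CK09} yields Proposition \ref{diagoper_kn}, i.e.\ $d_k(D_\sigma)\sim h_k$ under the doubling condition. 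Your sketch addresses neither the $\ell_p$-aggregation issue nor the failure of the diagonal formula for $d_k$, so the critical case remains unproved as written.
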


\begin{theorem}\label{gnap}
Let $\delta> \alpha >0,\ \theta_1 =
\frac{1/{p_1}-1/{p_2}}{1/{p_1}-1/2}$ and $\frac
1{\tilde{p}}=\frac\alpha d+\frac 1{p_1}.$ Denote by $c_k$ the $k$th
approximation number of the embedding (\ref{BBap}).
 \vspace{-0.2cm}
\begin{enumerate}
\item[$(i)$] Suppose in addition that $2\le p_1\le p_2\le
\infty$\,\,or\,\,$0< p_1 = p_2< 2.$ Then
$$c_k \sim k^{-\frac \alpha d}\psi\big(k^{\frac 1d}\big)^{-1}.$$
\vspace{-0.8cm}

\item[$(ii)$] Suppose that in addition to the general hypotheses,
$\tilde{p} < p_2 < p_1 \le \infty.$ Then
$$
c_k \sim
\begin{cases}
 k^{-\frac \alpha d - (\frac 1{p_1}-\frac
1{p_2})}\psi\big(k^{\frac 1d}\big)^{-1} ~ &{\rm if} ~ \alpha>\frac
dp,
\\
 \big(\int_{k^{1/d}}^\infty
\psi(t)^{-p}\frac{dt}t\big)^{1/p}
 ~ &{\rm if} ~ \alpha=\frac dp.
\end{cases}
 $$
\vspace{-0.6cm}

\item[$(iii)$] Suppose that in addition to the general hypotheses, $0< p_1
< 2 < p_2\le \infty.$ Then
$$
c_k \sim
\begin{cases}
 k^{-\frac\alpha d+\frac 12 - \frac
1{p_1}}\psi\big(k^{\frac 1d}\big)^{-1} ~ &{\rm if} ~ \alpha>\frac
d{p_1^\prime},
\\
 k^{-\frac {p_1^\prime\alpha}{2d}}\psi\big(k^{\frac
{p_1^\prime}{2d}}\big)^{-1} ~ &{\rm if} ~ \alpha < \frac
d{p_1^\prime}.
\end{cases}
$$
\vspace{-0.6cm}

\item[$(iv)$] Suppose that in addition to the general hypotheses, $0< p_1 <
p_2 \le 2.$ Then
$$
c_k \sim
\begin{cases}
 k^{-\frac\alpha d+\frac 1{p_2} - \frac
1{p_1}}\psi\big(k^{\frac 1d}\big)^{-1} ~ &{\rm if} ~ \alpha>\frac
d{p_1^\prime}\theta_1,
\\
 k^{-\frac {p_1^\prime\alpha}{2d}}\psi\big(k^{\frac
{p_1^\prime}{2d}}\big)^{-1} ~ &{\rm if} ~ \alpha < \frac
d{p_1^\prime}\theta_1.
\end{cases}
$$
\end{enumerate} \vspace{-0.2cm}
\end{theorem}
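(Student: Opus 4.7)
The plan is to derive Theorem \ref{gnap} from the general result Theorem \ref{gn} by specializing the abstract weight data to $w_{\alpha,\psi}$, and to treat the critical case $\alpha=d/p$ in part (ii) separately, since it lies exactly on the boundary not covered by Theorem \ref{gn}.

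First I would note that $w_{\alpha,\psi}\in\mathcal{W}_1\cap\mathcal{W}_2$ with associated function $\varphi(t)=(1+t^2)^{\alpha/2}\psi(t)$, and that the slow variation of $\psi$ combined with Lemma \ref{ab}(iii) yields $\alpha_\varphi=\beta_\varphi=\alpha$. The standing hypothesis $\delta>\alpha$ then forces $\mu=\min(\beta_\varphi,\delta)=\alpha$, so the parameter $\tilde p$ here coincides with the one in Theorem \ref{gn}. Consequently only the branch ``$\alpha_\varphi<\delta$'' is ever active in the cases of Theorem \ref{gn}, and a routine substitution $\varphi(k^{1/d})^{-1}\sim k^{-\alpha/d}\psi(k^{1/d})^{-1}$ (with the analogous replacement when the argument is $k^{p_1'/(2d)}$) converts parts (i), (iii), (iv) and the line $\alpha>d/p$ of (ii) of Theorem \ref{gn} directly into the corresponding formulas of Theorem \ref{gnap}; the ``$\delta<\beta_\varphi$'' lines of Theorem \ref{gn} are automatically vacuous.

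The remaining work concerns the critical case $\alpha=d/p$ in part (ii), where the strict inequality $d/p<\beta_\varphi$ required by Theorem \ref{gn}(ii) fails. My plan is to revisit the block decomposition $\mathrm{id}=\sum_{j,i}P_{j,i}$ used in Proposition \ref{an4} (and its Gelfand-number analogue that underlies the proof of Theorem \ref{gn}), and replace the geometric summation handled by Lemma \ref{rab} with a threshold summation valid exactly at the endpoint: since $\varphi(2^m)^{-p}\sim 2^{-m\alpha p}\psi(2^m)^{-p}=2^{-md}\psi(2^m)^{-p}$ when $\alpha=d/p$, the $p$-th power of the relevant quasi-norm in the operator ideal $\mathscr{L}_{p,\infty}^{(c)}$ reduces to $\sum_{m>M}\psi(2^m)^{-p}$, which is comparable to $\int_{2^M}^\infty \psi(t)^{-p}\,dt/t$; choosing $k\sim 2^{Md}$ and taking the $p$-th root produces the bound $\big(\int_{k^{1/d}}^\infty \psi(t)^{-p}\,dt/t\big)^{1/p}$. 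The main obstacle I anticipate is the matching lower estimate in this borderline regime: since no single dyadic shell $I_{j,i}$ dominates at the critical exponent, one must combine contributions from a range of shells $I_{j,0}$ and $I_{0,i}$ simultaneously through the factorization diagram of Step 5 of Proposition \ref{an4}, invoking the duality (\ref{duald*c}) in the Banach-space regime and, in the quasi-Banach regime, the $p$-Banach extension of Lemma \ref{an2} for Gelfand numbers noted in the proof of Theorem \ref{gn}. Once this summation is matched from below, the rest of the derivation is mechanical.
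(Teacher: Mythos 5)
Your reduction of parts (i), (iii), (iv) and the line $\alpha>d/p$ of (ii) to Theorem \ref{gn} via $\alpha_\varphi=\beta_\varphi=\alpha=\mu$ is exactly what the paper does ("the proof follows trivially in most cases"), and your observation that the $\delta<\beta_\varphi$ branches are vacuous is correct. The problem is the critical case $\alpha=d/p$ in (ii), which is the only part requiring a real argument, and there your proposed mechanism does not work. You want to sum the blocks $P_{j,i}$ with exponent $p=(1/p_2-1/p_1)^{-1}$ so that the tail becomes $\sum_{m>M}\psi(2^m)^{-p}\sim\int_{2^M}^\infty\psi(t)^{-p}\,dt/t$. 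But the subadditivity available for Gelfand numbers is (\ref{idealsinq}) with some exponent $0<\rho\le 1$ determined by the quasi-norm of the target space, and $p$ is typically $>1$ (e.g.\ $p_1=3$, $p_2=2$ gives $p=6$); you cannot take $\rho=p$. Summing $\rho$-th powers instead yields $\sum_{m>M}\psi(2^m)^{-\rho}$, which for a genuinely borderline $\psi$ (say $\psi(t)=(\log t)^{1/p+\varepsilon}$) diverges or is strictly larger than $\bigl(\sum_{m>M}\psi(2^m)^{-p}\bigr)^{\rho/p}$, so the upper bound is lost precisely in the regime the case is about. Moreover the sharp answer here is slowly varying in $k$, not a power of $k$ times a slowly varying factor, so no ideal $\mathscr{L}_{r,\infty}^{(c)}$ captures it. Your lower bound is also only gestured at: the factorization diagram of Step 5 of Proposition \ref{an4} tests one shell $I_{j,i}$ at a time, and you do not construct the multi-shell test operator you say is needed.

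The paper avoids both difficulties by a different route: it identifies the embedding (for $p_2<p_1$) with a diagonal operator $D_\sigma:\ell_{p_1}\to\ell_{p_2}$ and invokes Lemma \ref{diagoper_ag}, which gives the \emph{exact} identity $a_k(D_\sigma)=c_k(D_\sigma)=\bigl(\sum_{n\ge k}\sigma_n^p\bigr)^{1/p}$ for the non-increasing rearrangement of the diagonal, valid also in the quasi-Banach range. One then computes the rearrangement of the weights $2^{-j\delta}w_{j,k}^{-1}$ following the proof of Theorem 7 in \cite{Ku08}; the tail sum evaluates to $\bigl(\int_{k^{1/d}}^\infty\psi(t)^{-p}\,dt/t\bigr)^{1/p}$ and delivers upper and lower bounds simultaneously, with no quasi-triangle loss. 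If you want to salvage your write-up, replace the operator-ideal step in the critical case by this diagonal-operator/rearrangement argument.
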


\begin{re}
In any point of these above three theorems with the opposite
assumption $\alpha> \delta$, the asmptotic behaviour of the
approximation, Gelfand and Kolomogrov numbers may be directly
determined in terms of our main Theorems for the general case.
\end{re}
The proof of the above three theorems follows trivially in most
cases. We only need to prove the case $\alpha=d/p$ in point (ii) of
them, respectively.

Before passing to the proofs we collect some known results which
will be needed later. The first result, on approximation and Gelfand
numbers of diagonal operators, can be found in \cite{Pie78}, Theorem
11.11.4, where only the Banach space case $1\le p_2 < p_1\le \infty$
was considered. However, the same proof works also in the
quasi-Banach case, i.e. when $0 < p_1 < 1$ or $0 < p_2 < 1$.

\begin{lemma}\label{diagoper_ag}
Let $0 < p_2 < p_1\le\infty,\ 1/p = 1/{p_2}- 1/{p_1},$ and let
$(\sigma_n)\in\ell_p$ be a non-increasing sequence of non-negative
real numbers. Let $D_\sigma$ be a diagonal operator from
$\ell_{p_1}$ to $\ell_{p_2}$, defined by $(x_n)\mapsto
(\sigma_nx_n)$. Then
\begin{equation}\label{diag_ag}
a_k(D_\sigma:\ell_{p_1}\rightarrow\ell_{p_2})=c_k(D_\sigma:\ell_{p_1}\rightarrow\ell_{p_2})=
\Big(\sum_{n=k}^\infty\sigma_n^p\Big)^{1/p}.
\end{equation}
\end{lemma}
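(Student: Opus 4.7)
By (\ref{acd}) we always have $a_k(D_\sigma) \ge c_k(D_\sigma)$, so it is enough to prove the two bounds
$a_k(D_\sigma) \le \bigl(\sum_{n\ge k}\sigma_n^p\bigr)^{1/p} \le c_k(D_\sigma)$.

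\emph{Upper bound for $a_k$.} I would use the obvious rank-$(k-1)$ truncation $A_{k-1}$ that keeps only the diagonal entries $\sigma_1,\dots,\sigma_{k-1}$. Then $D_\sigma - A_{k-1}$ is the tail diagonal with entries $\sigma_k,\sigma_{k+1},\dots$, and H\"older's inequality with the pair $(p/p_2,\,p_1/p_2)$, both $>1$ because $1/p + 1/p_1 = 1/p_2$, yields
$$
\|D_\sigma - A_{k-1} : \ell_{p_1} \to \ell_{p_2}\| = \Big(\sum_{n\ge k}\sigma_n^p\Big)^{1/p},
$$
with equality attained along $x_n = \sigma_n^{p/p_1}$ on the tail (and zero on the first $k-1$ coordinates). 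This computation is purely algebraic and survives whenever $0<p_1,p_2\le\infty$ with $p_2<p_1$, since the relevant form of H\"older holds for arbitrary positive exponents summing inversely.

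\emph{Lower bound for $c_k$.} This is the substantive half. Fix $\varepsilon>0$ and a closed subspace $M\subset\ell_{p_1}$ with $\mathrm{codim}\,M<k$. I would truncate to the coordinate block $F_N = \mathrm{span}\{e_1,\dots,e_N\}$ with $N$ chosen so that $\sum_{n=k}^N \sigma_n^p \ge (1-\varepsilon)\sum_{n\ge k}\sigma_n^p$; the slice $M\cap F_N$ then has dimension at least $N-k+1$. The classical finite-dimensional lemma of Pietsch \cite{Pie78}, Section 11.11.4, produces a nonzero $x\in M\cap F_N$ for which
$$
\frac{\|D_\sigma x\|_{p_2}}{\|x\|_{p_1}} \ge \Big(\sum_{n=k}^N \sigma_n^p\Big)^{1/p}.
$$
The construction is an extremal-vector argument resting on a rank/dimension count inside $F_N$ coupled with the same H\"older identity used above. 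Letting $\varepsilon\downarrow 0$ delivers the lower bound on $c_k(D_\sigma)$.

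\emph{Main obstacle.} The delicate point is precisely the quasi-Banach extension $0<p_1<1$ or $0<p_2<1$. The customary shortcut $c_k(D_\sigma)=d_k(D_\sigma^\ast)$ is unavailable here because $\ell_{p_2}$ may fail to have a separating dual when $p_2<1$. What saves the proof is that the Pietsch construction uses neither Hahn-Banach nor the target's local convexity: it invokes only H\"older's inequality between $\ell_{p_1}$ and $\ell_{p_2}$ (valid for all positive exponents) and a linear-algebraic dimension count in $F_N$, while the approximation and Gelfand numbers retain their meaning on the $p$-Banach spaces $\ell_{p_j}$ thanks to the properties \textbf{(PS1)}--\textbf{(PS4)} recorded above. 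Verifying step by step that Pietsch's proof carries over literally is therefore the one genuine task, and — as asserted in the excerpt — it is routine, yielding $a_k(D_\sigma) = c_k(D_\sigma) = (\sum_{n\ge k}\sigma_n^p)^{1/p}$ throughout the stated range.
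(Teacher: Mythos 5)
Your plan is correct and coincides with the paper's treatment: the paper simply cites Pietsch's Theorem 11.11.4 (truncation plus H\"older for the upper bound on $a_k$, the finite-dimensional extremal-vector/dimension-count lemma for the lower bound on $c_k$) and observes that this argument, using no duality or local convexity, carries over verbatim to the quasi-Banach range $0<p_1<1$ or $0<p_2<1$ --- which is exactly the structure and the justification you give.
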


Let us mention that, in contrast to the above lemma, the same
formula is not true for Kolomogrov numbers. Inspired by the proof of
Theorem 1 in \cite{Ku08}, we tried by elementary methods to prove
the desired formula with ``$=$'' replaced by ``$\sim$'', as shown in
(\ref{diag_kn}), but failed. Alternatively, we establish it in terms
of an inequality between entropy and Kolomogrov numbers. The entropy
numbers is defined as follows: if $T\in\mathcal{L}(X,Y)$, then we
set
$$e_k(T)=\inf\{\varepsilon>0\ |\ \exists y_1, \ldots,
, y_{2^{k-1}}\in Y, \forall x\in X,\|x\|\le 1,\exists i\le
2^{k-1},\|Tx-y_i\|\le\varepsilon\}.$$ We refer to \cite{ET96,Pie78}
for detailed discussions of this concept and further references.

The next result is essentially due to Carl \cite{Car81,CS90} who
proved the relation between entropy and approximation numbers in the
Banach space case, for the extension to the quasi-Banach case see
Triebel \cite{ET96}.

\begin{lemma}\label{relation_ed}
Let $T \in \mathcal {L}(X, Y)$. Assume that $f:\
\mathbb{N}\rightarrow \mathbb{R}$ is a positive increasing function
with $f(k)\sim f(2k).$ Then there is a constant $C>0$ depending only
on $f$ and the quasi-triangle constant of $Y$ such that for all
$n\in\mathbb{N}$ we have
\begin{equation}\label{ed}
\sup\limits_{k\le n}f(k)e_k(T)\leq C\sup\limits_{k\le n}f(k)d_k(T).
\end{equation}
\end{lemma}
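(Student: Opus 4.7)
The plan is to mimic Carl's classical chaining proof, originally formulated for approximation numbers, and adapt it to the Kolmogorov numbers $d_k$ in the quasi-Banach setting. The statement is essentially a special case of what the literature (Carl \cite{Car81}, Carl--Stephani \cite{CS90}, Edmunds--Triebel \cite{ET96}) establishes for any s-number sequence, but we must be careful because no bounded projection onto a finite-codimensional subspace is available in a general quasi-Banach $Y$.

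First I would reduce the general $f$ to a power weight. Since $f$ is increasing and $f(k)\sim f(2k)$, on each dyadic block $[2^{j-1},2^j]$ the function $f$ is comparable to a constant, so it suffices to establish the inequality for $f(k)=k^{\sigma}$ for every fixed $\sigma>0$. Setting $M:=\sup_{k\le n}k^{\sigma}d_k(T)$, the goal becomes
\begin{equation*}
n^{\sigma}e_n(T)\le C\,M,\qquad\text{i.e.}\qquad d_k(T)\le M k^{-\sigma}\ \text{for all}\ k\le n\ \Longrightarrow\ e_n(T)\lesssim M n^{-\sigma}.
\end{equation*}

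Second, I exploit the definition of Kolmogorov numbers at dyadic scales. For each $j$ with $2^{j}\le n$, pick $N_{j}\subset\subset Y$ with $\dim N_{j}<2^{j}$ and $\|Q_{N_{j}}^{Y}T\|\le 2d_{2^{j}}(T)\le 2M\,2^{-j\sigma}$. Thus every $x$ in the unit ball $B_X$ satisfies $\mathrm{dist}_Y(Tx,N_{j})\le 2M\,2^{-j\sigma}$, so one can choose $n_{j}(x)\in N_{j}$ with $\|Tx-n_{j}(x)\|\le 3M\,2^{-j\sigma}$ (absorbing the $Y$-quasi-triangle constant). The differences $n_{j+1}(x)-n_{j}(x)$ lie in $N_{j+1}$ (after enlarging if necessary to assume $N_{j}\subset N_{j+1}$, which is harmless up to doubling dimensions) and have norm $\lesssim M\,2^{-j\sigma}$.

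Third, construct the net. On the ball of radius $\sim M\,2^{-j\sigma}$ inside the finite-dimensional space $N_{j+1}$ of dimension $<2^{j+1}$, choose an $\varepsilon_{j}$-net of cardinality at most $(3\cdot M\,2^{-j\sigma}/\varepsilon_{j})^{2^{j+1}}$ via the standard volume estimate for entropy of Euclidean/quasi-Banach balls. Define a global net of $Y$ by summing one choice from each level up to a cutoff $J$. The total log-cardinality is $\sum_{j=0}^{J}2^{j+1}\log(C M 2^{-j\sigma}/\varepsilon_{j})$, which must be $\le n-1$ (to be admissible for $e_n(T)$), while the approximation error is controlled via the $p$-triangle inequality in $Y$ by $\bigl(\sum_{j}\varepsilon_{j}^{p}\bigr)^{1/p}+C M\,2^{-J\sigma}$, with $0<p\le 1$ the power-type constant of $Y$.

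The main obstacle, and the technical heart of the argument, is balancing $\varepsilon_{j}$ and $J$ against $n$. Choosing $J$ so that $2^{J}\sim n$ and $\varepsilon_{j}\sim M\,2^{-j\sigma}2^{-(J-j)/q}$ for a suitable $q$ makes $2^{j+1}\log(M 2^{-j\sigma}/\varepsilon_{j})$ a geometric series in $j$ summing to $O(2^{J})=O(n)$, while the $p$-sum of $\varepsilon_{j}^{p}$ yields the desired bound $\lesssim M n^{-\sigma}$ because $\sigma>0$. The quasi-Banach constant of $Y$ enters only through $p$ and through a universal constant from iterating the quasi-triangle inequality along the chain; in particular $C$ in (\ref{ed}) can be tracked to depend only on $f$ (through $\sigma$) and on this constant, as asserted.
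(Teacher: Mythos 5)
Your chaining construction is sound in its core, and it is a genuinely different route from the one the paper takes. The paper does not redo Carl's net argument for $d_k$: it reduces to the already-known approximation-number version of the inequality (Theorem 1.3.3 in \cite{ET96}) by writing $X$ as a quotient of $\ell_p(I)$ and invoking the metric lifting property of $\ell_p(I)$ among $p$-normed spaces, so that $d_k(T)=a_k(TQ)$ and $e_k(T)=e_k(TQ)$ for the metric surjection $Q:\ell_p(I)\to X$; this sidesteps entirely the difficulty you flag about the absence of bounded projections in a general quasi-Banach target. Your direct argument buys a self-contained proof that makes the role of the Kolmogorov subspaces explicit, at the price of redoing the dyadic bookkeeping; the paper's argument is essentially two lines once the lifting identity is accepted. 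Your treatment of the quasi-Banach issues (Aoki--Rolewicz $p$-triangle inequality along the chain, volumetric covering of balls in the finite-dimensional spaces $N_{j+1}$) is correct.

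The genuine gap is your opening reduction. From ``$f$ increasing and $f(k)\sim f(2k)$'' you conclude that it suffices to treat $f(k)=k^{\sigma}$ with $\sigma>0$. That does not follow: a doubling increasing $f$ need not be equivalent to any power with positive exponent --- $f(k)=\log(k+1)$, or more to the point the function $f(k)=1/h_k$ required when the lemma is applied in Proposition \ref{diagoper_kn}, are doubling but not of power type, and the constant case (effectively $\sigma=0$) is excluded by your normalization although it is an admissible $f$. Comparability of $f$ to a constant on each dyadic block only gives $f(k)\sim f(2^{j})$ for $k\in[2^{j-1},2^{j}]$; it says nothing about how the sequence $f(2^{j})$ grows in $j$ beyond $1\lesssim f(2^{j})\lesssim C^{j}$. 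The repair is easy and should be made explicit: set $M:=\sup_{k\le n}f(k)d_k(T)$, replace $2^{-j\sigma}$ by $1/f(2^{j})$ throughout the chain, and choose $\varepsilon_{j}=\bigl(M/f(2^{J})\bigr)2^{-(J-j)\beta}$ for some fixed $\beta>0$; then $\sum_{j}\varepsilon_{j}^{p}\lesssim\bigl(M/f(2^{J})\bigr)^{p}$, and the log-cardinality count uses only $f(2^{J})/f(2^{j})\le C^{J-j}$, which is exactly the doubling hypothesis. With that change, (\ref{ed}) follows for all admissible $f$, not just for powers.
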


The proof of the above lemma mimics that of Proposition 5.1 in
Pisier \cite{Pis89} with minor changes and follows by Theorem 1.3.3
in \cite{ET96}. Note that we identify $X$ as a quotient of
$\ell_p(I),\ (0<p\le 1)$, for some $I$, and apply the metric lifting
property of $\ell_p(I)$ in the class of $p$-normed spaces, see
Proposition C.3.6 in Pietsch \cite{Pie78}.

Again we obtain the estimate of Kolomogrov numbers of diagonal
operators.
\begin{prop}\label{diagoper_kn}
Let $p_1, p_2, p,(\sigma_n)$ and $D_\sigma$ be as in Lemma
\ref{diagoper_ag}. Assume that the tail sequence
$h_k=(\sum_{n=k}^\infty\sigma_n^p)^{1/p}$ satisfies the doubling
condition $h_k\sim h_{2k}$. Then
\begin{equation}\label{diag_kn}
d_k(D_\sigma:\ell_{p_1}\rightarrow\ell_{p_2})\sim h_k.
\end{equation}
\end{prop}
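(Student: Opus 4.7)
The plan is to combine the easy upper bound $d_k \le a_k$ with a lower bound obtained through the entropy-number detour offered by Lemma \ref{relation_ed}.

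For the upper bound, inequality (\ref{acd}) gives $d_k(D_\sigma) \le a_k(D_\sigma)$, and Lemma \ref{diagoper_ag} identifies $a_k(D_\sigma) = h_k$. Thus $d_k(D_\sigma) \le h_k$ is immediate. The work therefore concentrates on the matching lower bound. The key auxiliary input I would establish first is that the entropy numbers satisfy $e_k(D_\sigma) \succeq h_k$. This is a classical fact for diagonal operators between $\ell_p$-spaces (Carl, Sch\"utt, Triebel) and can be obtained by restricting $D_\sigma$ to a dyadic block $\{e_n : 2^{j-1} < n \le 2^j\}$, on which it essentially acts as $\sigma_{2^j}$ times the identity $\ell_{p_1}^{2^{j-1}} \to \ell_{p_2}^{2^{j-1}}$, applying the volumetric entropy bounds for such finite-dimensional embeddings, and reorganizing via the doubling $h_k \sim h_{2k}$ to match the tail sum $h_{2^j}$.

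Once the entropy lower bound is at hand, I would set $f(k) := h_k^{-1}$, which is increasing (since $h_k$ decreases) and satisfies $f(k) \sim f(2k)$ by hypothesis. Lemma \ref{relation_ed} then produces
\begin{equation*}
c_0 \le \sup_{k \le n} h_k^{-1} e_k(D_\sigma) \le C \sup_{k \le n} h_k^{-1} d_k(D_\sigma)
\end{equation*}
for every $n$. The main obstacle will be bridging the gap between this supremum estimate, which only asserts the existence of some $k^* \le n$ with $d_{k^*}(D_\sigma) \succeq h_{k^*}$, and the pointwise lower bound $d_n(D_\sigma) \succeq h_n$ that we are after. I plan to close this gap by applying the inequality over the doubled range $k \le 2n$ and exploiting simultaneously the monotonicity of $d_k$, the upper bound $d_k \le h_k$ (which prevents $h_k^{-1} d_k$ from exceeding one), and the doubling of $h_k$: these together should force the extremal index into the window $[n, 2n]$, yielding $d_n(D_\sigma) \ge d_{k^*}(D_\sigma) \succeq h_{k^*} \ge h_{2n} \sim h_n$. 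Making this last step airtight, and verifying that the entropy lower bound holds with constants uniform in $k$, is the technical crux.
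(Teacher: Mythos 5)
Your overall architecture (upper bound $d_k\le a_k=h_k$ from (\ref{acd}) and Lemma \ref{diagoper_ag}; a lower bound $e_k(D_\sigma)\succeq h_k$ for the entropy numbers as in Theorem 1 of \cite{Ku08}; then the Carl-type inequality of Lemma \ref{relation_ed} to pass from entropy to Kolmogorov numbers) is exactly the route the paper takes. However, the final step of your plan --- converting the supremum inequality into the pointwise bound $d_n\succeq h_n$ --- contains a genuine gap, and the mechanism you propose for closing it does not work. With your choice $f(k)=h_k^{-1}$, Lemma \ref{relation_ed} combined with $e_k(D_\sigma)\ge c_0 h_k$ only yields $\sup_{k\le 2n}h_k^{-1}d_k(D_\sigma)\ge c_0/C$. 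Since $d_1(D_\sigma)=\|D_\sigma\|=h_1$, the term $k=1$ already contributes the value $1$ to this supremum, so the inequality is satisfied vacuously (the constant $c_0/C$ will in practice be at most $1$) and carries no information about $d_n$ for large $n$. The property you invoke to force the extremal index into $[n,2n]$ --- that $h_k^{-1}d_k\le 1$ for all $k$ --- works against you, not for you: precisely because the weighted sequence $h_k^{-1}d_k$ is bounded and attains its maximum already at $k=1$, nothing pushes the maximizer to the right.

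To repair the argument one must use a weight that grows polynomially, faster than the doubling rate of $h_k^{-1}$. The doubling hypothesis gives a $\beta>0$ with $h_k\le (m/k)^{\beta}h_m$ for $k\le m$; take $f(k)=k^{\lambda}$ with $\lambda>\beta$. Then Lemma \ref{relation_ed} yields $\sup_{k\le m}k^{\lambda}d_k\ge (c_0/C)\,m^{\lambda}h_m$, while the already established upper bound $d_k\le h_k$ gives $k^{\lambda}d_k\le (k/m)^{\lambda-\beta}m^{\lambda}h_m$ for $k\le m$; hence every index $k\le\theta m$ with $\theta^{\lambda-\beta}<c_0/(2C)$ is excluded from nearly attaining the supremum, and there must exist $k^{*}\in(\theta m,m]$ with $d_{k^{*}}\ge \frac{c_0}{2C}(m/k^{*})^{\lambda}h_m\ge\frac{c_0}{2C}h_m$. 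Monotonicity of $d_k$ and doubling of $h_k$ then give $d_n\succeq h_n$ for $n\sim\theta m$. This conversion is in substance what Theorem 3.3 of \cite{CK09}, cited in the paper's proof, supplies; without it (or an equivalent device) your argument is incomplete at its decisive point.
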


The proof of this proposition follows from (\ref{acd}), Lemma
\ref{relation_ed}, Theorem 1 in \cite{Ku08} and Theorem 3.3 in
\cite{CK09}.

Now we are ready to finish the proofs of Theorems \ref{anap},
\ref{knap} and \ref{gnap}. Their proofs for the case $\alpha=d/p>0$
in point (ii), respectively, follow literally the proof of Theorem 7
in \cite{Ku08} by virtue of Lemma \ref{diagoper_ag} or Proposition
\ref{diagoper_kn}, where a rearrangement technique is crucial.

\begin{re}
It is remarkable that all of our results in this paper do not depend
on the microscopic parameters\ $q_1$ and\ $q_2$ of the Besov spaces.
Therefore, all of them remain valid if we replace Besov spaces by
Triebel-Lizorkin spaces $F_{p,q}^s(\mathbb{R}^d, w)$.
\end{re}

\begin{re}
Finally, we wish to mention the following open question. In the
limiting case of small perturbations of polynomial weights,
$\alpha=\delta,$ how do the related widths behave? Different from
the non-limiting case, the corresponding behavior herein maybe
depend on the parameters\ $q_1$ and\ $q_2$ to a certain extent. Some
ideas from \cite{Ha97,KLSS05,MO88} might be helpful.
\end{re}

\section*{Acknowledgments}
~~~ The authors are extremely grateful to Ant\'onio M. Caetano,
Fanglun Huang, Thomas K$\ddot{\rm u}$hn, Erich Novak and Leszek
Skrzypczak for their direction and help on this work. This research
was supported by the Natural Science Foundation of China (Grant No.
10671019) and Anhui Provincial Natural Science Foundation (Grant No.
090416230).

\end{document}